\newtheorem{theorem}{Theorem}
\newtheorem{lemma}{Lemma}
\newtheorem{proposition}{Proposition}
\newtheorem{definition}{Definition}
\newtheorem{remark}{Remark}
\newtheorem{corollary}{Corollary}
\newtheorem*{examples*}{Examples}
\newcommand{\dd}{{\mathrm d}}
\def\R{\mathbb{R}}
\def\Mi{\mathscr{M}}
\def\di{\mathsf{d}}
\def\con{\mathfrak{C}}
\DeclareMathOperator{\hk}{H\HKkern K}
\newcommand{\HKkern}{%
  \mkern-6.0mu
  \mathchoice{}{}{\mkern0.2mu}{\mkern0.5mu}%
}
\DeclareMathOperator{\et}{E\ETkern T}
\newcommand{\ETkern}{%
  \mkern-3.0mu
  \mathchoice{}{}{\mkern0.2mu}{\mkern0.5mu}%
}
\def\ghk{\mathsf{G\kern-3pt\hk}}
\def\Det{\mathsf{D}_{\et}}
\def\DET{\mathbf{D}_{\et}}
\begin{document}
\title{Entropy-Transport distances between unbalanced metric measure spaces}
\author{Nicol\`o De Ponti}\thanks{Nicol\`o De Ponti: Scuola Internazionale Superiore di Studi Avanzati (SISSA), Trieste,  Italy, \\email: ndeponti@sissa.it} 
\author{Andrea Mondino}\thanks{Andrea Mondino: Mathematical Institute, University of Oxford, UK, \\email: Andrea.Mondino@maths.ox.ac.uk}

\begin{abstract}
Inspired by the recent theory of Entropy-Transport problems and by the $\mathbf{D}$-distance of Sturm on \emph{normalised} metric measure spaces, we define a new class of complete and separable distances between metric measure spaces of possibly different total mass. 
\\We provide several explicit examples of such distances, where a prominent role is played by a geodesic metric based on the Hellinger-Kantorovich distance. Moreover, we discuss some limiting cases of the theory, recovering the ``pure transport'' $\mathbf{D}$-distance and introducing a new class of ``pure entropic'' distances. 
\\We also study in detail the topology induced by such Entropy-Transport metrics, showing some compactness and stability results for metric measure spaces satisfying Ricci curvature lower bounds in a synthetic sense. 
\end{abstract}

\maketitle

\tableofcontents

\section*{Introduction}
With motivations from  pure Mathematics as well as from applied sciences, over the last decades a growing attention has been paid to the problem of ``comparing  objects'', which come naturally endowed with a distance/metric and a weight/volume form/measure. From the mathematical point of view, such objects are formalised as metric measure spaces (m.m.s. for short) $(X,\di,\mu)$, where the metric structure $(X,\di)$ describes the geometry and the mutual distance of points, and the measure $\mu$ ``weights'' the relative importance of different parts of the object. 
\\The flexibility of such a framework allows to unify the treatment of a series of problems stemming from various fields of science and technology, e.g. chemistry \cite{KM}, data science \cite{PC},  multi-omics data alignment \cite{DSSSS}, computer vision \cite{Sch}, language processing \cite{A-MJ}, graph \cite{XLC} and shape \cite{SPKS, WPTM} matching, barycenters \& shape analysis \cite{PCS}, Generative Adversarial Networks \cite{BAKJ},  machine learning \cite{VCFTC}. The theory of metric measure spaces has been flourishing in pure Mathematics as well, providing a unified setting to investigate concentration of measure phenomena \cite{Ledoux, Shioya}, the theory of Ricci limit spaces \cite{Fukaya, CC97I} and, more generally, synthetic notions of Ricci curvature lower bounds \cite{St1, St2, LV, AGS}.

In order to ``quantify the similarities and differences between two such objects'', it is thus  natural to investigate appropriate notions of distance between metric measure spaces. This idea has its roots in the work of Gromov \cite[Chapter 3$\frac{1}{2}$]{Grom}), who first recognized the importance of studying the ``space of spaces'' $\boldsymbol{\mathrm{X}}$ as a metric space in its own right. Formally, $\boldsymbol{\mathrm{X}}$ denotes the set of equivalence classes of metric measure spaces $(X,\di,\mu)$, where $(X,\di)$ is a complete and separable metric space, and $\mu$ is a finite, nonnegative, Borel measure;  we are naturally identifying two m.m.s. $(X_1,\mathsf{d}_1,\mu_1)$, $(X_2,\mathsf{d}_2,\mu_2)$ if there exists an isometry $\psi:\mathsf{supp}(\mu_1)\rightarrow \mathsf{supp}(\mu_2)$ such that $\psi_{\sharp}\,\mu_1=\mu_2$. Here by $\mathsf{supp}(\mu)$ we denote the support of the measure $\mu$ (see the preliminary section for more details).  
\\In the recent years, the theory has been pushed forward by the works of Sturm \cite{St1, St3} and Memoli \cite{Memoli1} who realized that ideas from mass transportation can be used to produce new relevant distances between metric measure spaces. Such distances have been successfully applied in different fields, but suffer from a major restriction which is intrinsic of the Wasserstein distances coming from optimal transport: they can be used to compare only spaces \emph{with the same total mass}.

The goal of the present paper is to overcome this limitation by taking advantage of the theory of optimal \emph{Entropy-Transport} problems \cite{LMS}. In contrast with the classical transport setting, these problems allow the description of phenomena where the conservation of mass may not hold; for this reason they are also known in the literature as ``unbalanced optimal transport problems''.  The corresponding theory is fairly recent and is becoming increasingly popular in applications, e.g. gradient flows to train neural networks \cite{CB,RJBV}, supervised learning \cite{FZMA}, medical imaging \cite{FRTG} and video \cite{LBR} registration.  Indeed, the Entropy-Transport relaxation  seems to outperform classical optimal transport in all the problems where the input data is noisy or a normalization procedure is not appropriate.
 We refer to \cite{SFVTP} and references therein for more applications of unbalanced optimal transport. 

As we are going to explain in detail below, inspired by the construction of the $\mathbf{D}$-distance of Sturm \cite{St1},  we are able to produce a new class of complete and separable distances between metric measure spaces by replacing the Wasserstein distance with an Entropy-Transport distance. Such metric structures on $\boldsymbol{\mathrm{X}}$ also turn out to be geodesic (resp. length) when the underlying Entropy-Transport distance is geodesic (resp. length).

\vspace{5mm}
\textbf{Optimal transport and Sturm distances.} Let $(X,\di)$ be a metric space and $\boldsymbol{\mathrm c}:X\times X\rightarrow [0,+\infty]$ be a lower semi-continuous cost function.  The optimal transport problem between two probability measures $\mu_1,\mu_2$ consists in the minimization problem:
\begin{equation}\label{intro: Kantorovich ot problem}
\mathrm{T}(\mu_1,\mu_2):=\inf_{\boldsymbol{\gamma}\in \Pi(\mu_1,\mu_2)}\int_{X\times X}\boldsymbol{\mathrm c}(x_1,x_2)\, \dd \boldsymbol{\gamma}(x_1,x_2).
\end{equation}
Here $\Pi(\mu_1,\mu_2)$ denotes the set of measures $\boldsymbol{\gamma}$ in the product space $X\times X$ whose marginals satisfy the constraint $\pi^i_{\sharp}\boldsymbol{\gamma}=\mu_i$, where $\pi^i$ denotes the projection map $\pi^i(x_1,x_2)=x_i$.
\\A typical choice for the cost function is $\boldsymbol{\mathrm c}(x_1,x_2)=\di^p(x_1,x_2)$, $p\ge 1$. In this situation, the transport cost $\mathrm{T}$ is the $p$-power of the celebrated $p$-Wasserstein distance $\mathcal{W}_p$, a metric on the set $\mathscr{P}_p(X)$ of probability measures over $X$ with finite $p$-moment. Starting from the seminal work of Kantorovich, the metric space $(\mathscr{P}_p(X), \mathcal{W}_p)$ has been thoroughly studied: it inherits many geometric properties of the underlying space $(X,\di)$ (such as completeness, separability, geodesic property) and induces the weak topology (with $p$-moments) of probability measures.  We refer to the monograph \cite{Vil} for a detailed overview of the topic.

As observed by Sturm \cite{St1}, one can lift the metric $\mathcal{W}_p$ to a distance between metric measure spaces by defining:
\begin{equation}
\mathbf{D}_p\big((X_1,\mathsf{d}_1,\mu_1),(X_2,\mathsf{d}_2,\mu_2)\big):=\inf \mathcal{W}_p(\psi^1_{\sharp}\mu_1,\psi^2_{\sharp}\mu_2),
\end{equation}
where the infimum is taken over all complete and separable metric spaces $(\hat{X},\hat{\mathsf{d}})$, and all isometric embeddings $\psi^i:\mathsf{supp}(\mu_i)\rightarrow \hat{X}$. It is proved in \cite[Theorem 3.6]{St1} that $\mathbf{D}_p$ is a complete, separable and geodesic distance on the set
$$\boldsymbol{\mathrm{X}}_{1,p}:=\{ (X,\mathsf{d},\mu)\in \boldsymbol{\mathrm{X}}: \ \mu\in \mathscr{P}_p(X,\mathsf{d})\}.$$

\vspace{5mm}
\textbf{Entropy-Transport problems and Sturm-Entropy-Transport distances.} 
The idea at the core of Entropy-Transport problems is to relax the marginal constraints typical of the classical Kantorovich formulation \eqref{intro: Kantorovich ot problem} by adding some suitable penalizing functionals which keep track of the deviation of the marginals $\gamma_i:=\pi^i_{\sharp}\boldsymbol{\gamma}$ from the data $\mu_i$, $i=1,2$. 
\\Following the approach of Liero, Mielke and Savaré \cite{LMS}, given a superlinear, convex function $F:[0,+\infty)\rightarrow [0,+\infty]$ such that $F(1)=0$ (for simplicity here we assume $F$ to be superlinear, see definition \eqref{def: F divergenza} for the general case), one considers the entropy functional (also called Csisz\'ar $F$-divergence \cite{Csiszar})
\begin{equation}
D_F:\mathscr{M}(X)\times \mathscr{M}(X)\rightarrow [0,+\infty], \qquad
D_F(\gamma||\mu):=\begin{cases} \int_X F\big(\frac{\mathrm{d}\gamma}{\mathrm{d}\mu}\big)\mathrm{d} \mu \quad &\textrm{if} \ \gamma \ll \mu, \\
+\infty &\textrm{otherwise}. \end{cases}
\end{equation}
Here $\mathscr{M}(X)$ denotes the set of finite, nonnegative, Borel measures over $X$. 
A classical example is given by the choice $F=U_1(s):=s\ln (s)-s+1$, that corresponds to the celebrated Kullback-Leibler divergence (note that when ${\boldsymbol{\gamma}}$ and $\mu$ are probability measures, $D_{U_1}$ coincides with the celebrated Boltzmann-Shannon entropy ${\rm Ent}(\rho \mu| \mu)=\int \rho \log \rho \, \dd\mu$). 

Given $\mu_1,\mu_2\in \mathscr{M}(X)$, the Entropy-Transport problem induced by the entropy function $F$ and the cost function $\boldsymbol{\mathrm c}$ is then defined as  
\begin{equation}\label{intro: ET problem}
\et(\mu_1,\mu_2):=\inf_{\boldsymbol{\gamma}\in \mathscr{M}(X\times X)}\bigg\{\sum_{i=1}^2 D_F(\gamma_i||\mu_i)+\int_{X\times X}\boldsymbol{\mathrm c}(x_1,x_2)\dd \boldsymbol{\gamma}(x_1,x_2)\bigg\}.
\end{equation}
We emphasize that the problem \eqref{intro: ET problem} makes perfect sense even when $\mu_1(X)\neq \mu_2(X)$. 

As in the case of optimal transport problems, it is natural to consider cost functions of the form $\boldsymbol{\mathrm c}(x_1,x_2)=\ell(\di(x_1,x_2))$, where $\di$ is a distance on $X$ and $\ell:=[0,\infty)\rightarrow [0,\infty]$ is a general function. With a careful choice of the functions $F$ and $\ell$ (see \cite{DepIeee} for a discussion on the metric properties of Entropy-Transport problems), one is able to produce a distance $\Det$ on the space $\mathscr{M}(X)$ by taking a suitable power of the Entropy-Transport cost $\et$, namely $\Det=\et^a$ for a certain $a\in (0,1]$. 
\\In the paper we introduce the class of \emph{regular} Entropy-Transport distances. The formal definition of this class of distances is given in Definition \ref{def: Entropy-Transport distance}, here we just mention than any regular Entropy-Transport distance $\Det$ is a complete and separable metric on $\mathscr{M}(X)$ of the form $\Det=\et^a$, for an Entropy-Transport cost induced by sufficiently regular functions $F$ and $\ell$.

For any regular Entropy-Transport distance, the \emph{Sturm-Entropy-Transport} distance $\DET$ between the (equivalence classes of) m.m.s. $(X_1,\di_1,\mu_1)$, $(X_2,\di_2,\mu_2)$ is then defined as
\begin{equation}\label{intro: def DET}
\DET\big((X_1,\mathsf{d}_1,\mu_1),(X_2,\mathsf{d}_2,\mu_2)\big):=\inf \Det (\psi^1_{\sharp}\mu_1,\psi^2_{\sharp}\mu_2)
\end{equation}
where the infimum is taken over all complete and separable metric spaces $(\hat{X},\hat{\mathsf{d}})$, and all isometric embeddings $\psi^1:\mathsf{supp}(\mu_1)\rightarrow \hat{X}$ and $\psi^2:\mathsf{supp}(\mu_2)\rightarrow \hat{X}$.

The main result of the paper (Theorem \ref{th main}) is that every Sturm-Entropy-Transport distance defines a complete and separable metric structure on $\mathbf{X}$. Moreover it satisfies the geodesic (resp. length) property if the distance $\Det$ satisfies the geodesic (resp. length) property on the space of measures.

 We also study in detail the notion of convergence induced by such distances, showing that it corresponds to the weak measured-Gromov convergence introduced in \cite{GMS}. As a consequence, we obtain a compactness result for the class of m.m.s. $(X,\mathsf{d},\mu)$ satisfying the $\mathsf{CD}(K,N)$ condition, having bounded diameter and satisfying  $0<v\le \mu(X)\le V$. We refer to Theorem \ref{th: compact CD} for the precise statement and to the preliminaries for the definition of the curvature-dimension condition $\mathsf{CD}(K,N)$.

At a technical level, the proofs of our results are inspired by the corresponding ones given by Sturm in \cite{St1}, but they require new ideas in order to deal with general cost functions and with the entropic part of the problem. Two key results of independent interest are contained in Proposition \ref{pr: formulazione D-LET} and Lemma \ref{lem:esistenza ottimo}, where we show that the infimum in the right hand side of \eqref{intro: def DET} is actually a minimum, and we give an explicit formulation of the Sturm-Entropy-Transport distance, namely
\begin{equation}
\DET^{1/a}((X_1,\mathsf{d}_1,\mu_1),(X_2,\mathsf{d}_2,\mu_2))=\sum_{i=1}^2D_F(\gamma_i||\mu_i)+\int_{X_1\times X_2}\ell\big(\hat{\mathsf{d}}(x,y)\big)\dd\boldsymbol{\gamma},
\end{equation}
for some optimal measure $\boldsymbol{\gamma}\in\mathscr{M}(X_1\times X_2)$ and optimal pseudo-metric coupling $\hat{\mathsf{d}}$ between $\mathsf{d}_1$ and $\mathsf{d}_2$ (see the preliminaries for the definition of pseudo-metric coupling).
Also the proof of one of the main results, Theorem \ref{th main}, despite being inspired by \cite{St1}, departs from it and needs some new ideas:
\begin{itemize}
\item in order to show that $\mathbf{D}_p$ defines a \emph{non-degenerate} distance function (i.e. $$\mathbf{D}_p \big( (X_1,\mathsf{d}_1,\mu_1),  (X_2,\mathsf{d}_2,\mu_2)  \big)=0$$ implies that $(X_1,\mathsf{d}_1,\mu_1)$ and $(X_2,\mathsf{d}_2,\mu_2)$ are isomorphic as metric measure spaces), Sturm \cite{St1} establishes a comparison result with Gromov's $\underline\Box_{1}$ distance, of independent interest; this permits to inherit the non-degeneracy of $\mathbf{D}_p$ by the one of $\underline\Box_{1}$.
\\Instead, we argue directly: thanks to the aforementioned Proposition \ref{pr: formulazione D-LET} and Lemma \ref{lem:esistenza ottimo}, we can exploit the existence of an optimal coupling \emph{both at the level of space and measure} and  infer the non-degeneracy of $\DET$ directly;
\item  in order to show that the $\mathbf{D}_p$ distance is \emph{length}, Sturm \cite{St1} argues by approximation via finite metric spaces, taking advantage of the ``pure transport'' behaviour of $\mathbf{D}_p$.
\\ Due to the entropy contribution in the  $\DET$ distance, we argue differently:  the main point is to embed everything in a complete, separable and \emph{geodesic} ambient space, obtained by a slight modification of Kuratowski embedding.
 \end{itemize}

The class of regular Entropy-Transport distances includes some of the main examples of Entropy-Transport distances known in the literature, including:
\begin{itemize}
\item The Hellinger-Kantorovich geodesic distance \cite{LMS, LMS1, Chizat, KMV} induced by the choices
$$a=1/2\, , \qquad F(s)=U_1(s)\, , \qquad \ell(d)=\begin{cases} -\log\left({\cos^2(d)}\right) \ \ &\textrm{if} \ d<\frac{\pi}{2}, \\
+\infty \ \ &\textrm{otherwise}.\end{cases}$$
\vspace{5mm}
\item The so-called \emph{Gaussian} Hellinger-Kantorovich distance \cite{LMS} that corresponds to the choices
$$a=1/2\, , \qquad F(s)=U_1(s)\, , \qquad \ell(d)=d^2.$$
\vspace{5mm}
\item The \emph{quadratic power-like} distances studied in \cite{DepIeee} corresponding to
$$a=1/2\, , \qquad F(s)=U_p(s):=\frac{s^p-p(s-1)-1}{p(p-1)}\, , \qquad \ell(d)=d^2, \qquad 1<p\le 3.$$
\end{itemize}

\vspace{3mm}

Moreover, our analysis is not restricted to regular Entropy-Transport distances. By a limit procedure we also discuss some singular cases covering:
\begin{itemize}
\item The ``pure entropy'' setting that corresponds to the choice
$$\boldsymbol{\mathrm c}(x_1,x_2)=\begin{cases}0 \ &\textrm{if} \ x_1=x_2, \\ +\infty &\textrm{otherwise.} \end{cases}$$
In this situation we construct a family of distances between metric measure spaces inducing a notion of \emph{strong} convergence (see Theorems \ref{th: limit entropy distances} and \ref{th: SPL vs mG convergence} for the details). 
\vspace{5mm}
\item The ``pure transport'' setting, corresponding to
$$a=1/p\, , \qquad F(s)=\begin{cases} 0 \ &\mathrm{if} \ s=1, \\ +\infty &\mathrm{otherwise}, \end{cases} \qquad \ell(d)=d^p,$$
where we recover the $\mathbf{D}_p$-distances introduced by Sturm.
\vspace{5mm}
\item The \emph{Piccoli-Rossi} distance $\mathsf{BL}$ \cite{PR1,PR2} (also known as \emph{bounded-Lipschitz distance}), induced by the choices
$$a=1\, , \qquad F(s)=|s-1|\, , \qquad \ell(d)=d.$$
By an analogous procedure to the one described in \eqref{intro: def DET}, in Theorem \ref{th: limit PR} we show that the distance $\mathsf{BL}$ can be lifted to a complete distance $\mathbf{BL}$ on the set $\mathbf{X}$. 
\end{itemize} 
 
\vspace{5mm}
\textbf{Note on the preparation.} Some of the results of the paper (often under additional assumptions) have been presented at different seminars and included in the Phd thesis of the first named author \cite[Chapter 5]{DepPhd}, where the construction of the Sturm-Entropy-Transport distances induced by the Hellinger-Kantorovich and the quadratic power-like distances is developed. 
\\Only during the final stage of preparation of the present manuscript (September 2020), we became aware of the independent work \cite{SVP}, which defines a class of distances (denoted by $\mathrm{CGW}$, for ``conic Gromow-Wasserstein'') between unbalanced metric measure spaces starting from the construction of the Gromov-Wasserstein distance introduced in \cite{Memoli1} and the \emph{conical} formulation of the Entropy-Transport problems (see \cite{LMS, Chizat, DepIeee} and Remark \ref{rem: conical formulation} for a discussion on the “cone geometry'' of Entropy-Transport problems). The paper \cite{SVP} also provides some interesting numerical discussions on the topic, while it is not present a study on the analytic and geometric properties of this class of distances (such as completeness, separability, the length and geodesic property, compactness).
An expert reader will notice that our $\DET$ distance is an \emph{unbalanced} counterpart of Sturm's $\mathbf{D}_p$ distance \cite{St1} for probability metric measure spaces, while S\'ejourn\'e-Vialard-Peyr\'e $\mathrm{CGW}$ distance \cite{SVP} is an \emph{unbalanced} counterpart of Memoli's \cite{Memoli1} Gromov-Wasserstein distance. A major difference between the two approaches is that while our $\DET$ distance is complete (see Theorem \ref{th main}),  the Gromov-Wasserstein distance of \cite{Memoli1}  is not complete, and the same is expected for the $\mathrm{CGW}$ distance of \cite{SVP}. The relation between $\DET$ and $\mathrm{CGW}$ is analysed in Section \ref{sec: CGW}, where we prove an upper bound of the latter in terms of the former.

\vspace{5mm}
\textbf{Acknowledgements.}
The project started when N.D.P. was visiting A.M. in the fall 2018 at the Mathematics Institute of the University of Warwick, and took advantage of a second visit of N.D.P.  to the Mathematical Institute of the University of Oxford in March 2020. The authors wish to thank both the institutions for the inspiring atmosphere and the excellent working conditions.
\\A.M. acknowledges the support of the EPSRC First Grant  EP/R004730/1 “Optimal transport and geometric
analysis” and of the European Research Council (ERC), under the European's Union Horizon 2020 research and innovation programme, via the ERC Starting Grant  “CURVATURE”, grant agreement No. 802689.
\\The authors wish to warmly thank Giuseppe Savaré for valuable discussions on the topics of the paper and are grateful to the anonymous reviewers, for their suggestions and comments that helped to improve
a previous version of the manuscript.

\section{Preliminaries and notation}
\subsection{Metric and measure setting}
A function $\mathsf{d}:X\times X\rightarrow [0,\infty]$ is a \emph{pseudo-metric} on the set $X$ if $\mathsf{d}$ is symmetric, satisfies the triangle inequality and $\mathsf{d}(x,x)=0$ for every $x\in X$. We say that $\mathsf{d}$ is a metric possibly attaining the value $+\infty$ if it is a pseudo-metric such that $\mathsf{d}(x,y)=0$ implies $x=y$. When $\mathsf{d}$ is also finite-valued, we simply say that $\mathsf{d}$ is a metric.
A \emph{pseudo-metric space} (resp. \emph{metric space}) will be a couple $(X,\mathsf{d})$, where $\mathsf{d}$ is a pseudo-metric (resp. metric) on the set $X$.

On a pseudo-metric space we will always consider the topology induced by the open balls $B_r(x):=\{y\in X: \mathsf{d}(x,y)<r\}.$ A \emph{Polish space} is a separable completely metrizable topological space. 
We will denote by $\mathsf{diam}(X)$ the diameter of a metric space $X$.

An \emph{isometry} between two metric spaces $(X_1,\mathsf{d_1})$, $(X_2,\mathsf{d}_2)$ is a map $\psi:X_1\rightarrow X_2$ such that for every $x,y\in X_1$ we have
\begin{equation}
\mathsf{d}_1(x,y)=\mathsf{d}_2(\psi(x),\psi(y)).
\end{equation}

Let $\{(X_{\alpha},\mathsf{d}_{\alpha})|{\alpha}\in A\}$ be an indexed family of metric spaces, we define its \emph{disjoint union} as 
\begin{equation*}
\bigsqcup_{\alpha}X_{\alpha}:=\bigcup\Big\{X_{\alpha}\times \{\alpha\}|{\alpha}\in A\Big\},
\end{equation*}
endowed with a pseudo-metric $\hat{\mathsf{d}}$, called \emph{pseudo-metric coupling} between $\{\mathsf{d}_{\alpha}\}$, such that $\hat{\mathsf{d}}((x,\alpha),(y,\alpha))=\mathsf{d}_{\alpha}(x,y)$ for every $x,y\in X_{\alpha}$.
The inclusion map 
\begin{equation*}
\iota_{\alpha}:X_{\alpha}\to \bigsqcup_{\alpha}X_{\alpha}, \ \ \ \iota_{\alpha}(x):=(x,\alpha),
\end{equation*}
is thus an isometry with image $X_{\alpha}\times\{\alpha\}$. We will often identify, with a slight abuse of notation, the space $X_{\alpha}$ with $X_{\alpha}\times\{\alpha\}$. 

\begin{lemma}\label{lem: quotient disjoint union is metric}
Let $(X_1,\di_1)$, $(X_2,\di_2)$ be two complete and separable metric spaces. Let $\hat{\di}$ be a finite valued pseudo-metric coupling between $\di_1$ and $\di_2$. Then the space 
\begin{equation}
\tilde{X}:=(X_1\sqcup X_2)/\sim \qquad \textit{where} \qquad x_1\sim x_2 \Longleftrightarrow \hat{\di}(x_1,x_2)=0
\end{equation} 
endowed with the distance 
$$\tilde{\di}([x_1],[x_2]):=\hat{\di}(x_1,x_2)$$
is a complete and separable metric space. Here $[x]\in \tilde{X}$ denotes the equivalence class of the point $x\in X_1\sqcup X_2$.
\end{lemma}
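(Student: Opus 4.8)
The plan is to check the three metric axioms for $\tilde{\di}$ on $\tilde X$ one by one, then separability, then completeness, with the bulk of the work being completeness. First I would verify that $\tilde{\di}$ is well-defined: if $x_1\sim x_1'$ and $x_2\sim x_2'$, then the triangle inequality for $\hat{\di}$ gives $\hat{\di}(x_1,x_2)\le \hat{\di}(x_1,x_1')+\hat{\di}(x_1',x_2')+\hat{\di}(x_2',x_2)=\hat{\di}(x_1',x_2')$ and symmetrically the reverse, so the value depends only on the classes. Symmetry and the triangle inequality for $\tilde{\di}$ are then inherited directly from those of $\hat{\di}$, and $\tilde{\di}([x],[x])=\hat{\di}(x,x)=0$. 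Non-degeneracy is exactly the point of the quotient: $\tilde{\di}([x_1],[x_2])=0$ means $\hat{\di}(x_1,x_2)=0$, i.e. $x_1\sim x_2$, i.e. $[x_1]=[x_2]$. Finiteness of $\tilde{\di}$ follows from the assumption that $\hat{\di}$ is finite-valued.

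For separability, I would take countable dense subsets $D_i\subseteq X_i$ (which exist since $(X_i,\di_i)$ is separable) and show that $\{[x]: x\in D_1\cup D_2\}$ is dense in $\tilde X$: given $[y]\in\tilde X$ with, say, $y\in X_1$, and $\varepsilon>0$, pick $x\in D_1$ with $\di_1(y,x)<\varepsilon$; since $\iota_1$ is an isometry into the disjoint union, $\hat{\di}(y,x)=\di_1(y,x)<\varepsilon$, hence $\tilde{\di}([y],[x])<\varepsilon$.

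The main obstacle is completeness, and here the key observation is that the quotient map does not destroy Cauchy sequences in a harmful way, but one must be careful because a Cauchy sequence in $\tilde X$ may alternate between (images of) $X_1$ and $X_2$. Given a Cauchy sequence $([z_n])$ in $\tilde X$, I would first pass to a subsequence with $\tilde{\di}([z_{n_{k+1}}],[z_{n_k}])<2^{-k}$; since a Cauchy sequence converges as soon as a subsequence does, it suffices to handle this fast subsequence, which I relabel $([z_k])$. By the pigeonhole principle, infinitely many of the representatives $z_k$ lie in the same $X_i$, say $X_1$; relabel again so that $z_k\in X_1$ for all $k$ (discarding the finitely-or-infinitely-many others is fine since we only need convergence of a subsequence, but to be safe one chooses the infinite sub-subsequence lying in $X_1$ and notes it is still Cauchy in $\tilde X$). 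Then for this sequence, $\di_1(z_k,z_\ell)=\hat{\di}(z_k,z_\ell)=\tilde{\di}([z_k],[z_\ell])$ (using that $\iota_1$ is an isometry), so $(z_k)$ is Cauchy in the complete space $(X_1,\di_1)$ and converges to some $z\in X_1$. Finally $\tilde{\di}([z_k],[z])=\hat{\di}(z_k,z)=\di_1(z_k,z)\to 0$, so $[z_k]\to[z]$ in $\tilde X$, and therefore the original Cauchy sequence converges to $[z]$ as well. This completes the proof.

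The one subtlety worth flagging is the reduction "a Cauchy sequence converging along a subsequence converges": this is a standard fact in any pseudo-metric (hence metric) space and I would invoke it without further comment. Apart from that, every step is a routine transfer of the corresponding property of $\di_1$, $\di_2$, or $\hat{\di}$ through the isometric inclusions $\iota_i$ and the definition of the quotient, so no genuinely new idea is needed beyond the pigeonhole argument that pins a fast Cauchy subsequence inside a single $X_i$.
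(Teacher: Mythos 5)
Your proof is correct and follows essentially the same route as the paper's: well-definedness via the triangle inequality, separability transferred from the separable pieces $X_1,X_2$, and completeness by extracting a subsequence whose representatives lie in a single $X_i$, using that $\hat{\di}$ restricts to $\di_i$ there and that $(X_i,\di_i)$ is complete. The only cosmetic difference is your extra passage to a fast ($2^{-k}$) subsequence, which is harmless but unnecessary since the pigeonhole reduction alone suffices.
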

\begin{proof}
We firstly notice that $\tilde{\di}$ is well defined on $\tilde{X}$. Indeed, if $x_1 \sim \tilde{x}_1$ and $x_2 \sim \tilde{x}_2$ we have
\begin{equation*}
\begin{aligned}
&\tilde{\di}(x_1,x_2)=\hat{\di}(x_1,x_2)\leq \hat{\di}(x_1,\tilde{x}_1)+\hat{\di}(\tilde{x}_1,\tilde{x}_2)+\hat{\di}(\tilde{x}_2, x_2) =\tilde{\di}(\tilde{x}_1,\tilde{x}_2)\\
&\tilde{\di}(\tilde{x}_1,\tilde{x}_2)=\hat{\di}(\tilde{x}_1,\tilde{x}_2)\leq \hat{\di}(\tilde{x}_1, x_1)+\hat{\di}(x_1,x_2)+\hat{\di}(x_2,\tilde{x}_2)=\tilde{\di}(x_1,x_2)
\end{aligned}
\end{equation*}
which implies $\tilde{\di}(x_1,x_2)=\tilde{\di}(\tilde{x}_1,\tilde{x}_2)$.
\\It is clear that $\tilde{\di}$ is a metric on $(X_1\sqcup X_2)/\sim$. 
\\The separability is a consequence of the fact that $(X_1\sqcup X_2,\hat{\di})$ is separable, being the union of two separable space (recall that $\hat{\di}=\di_i$ on $X_i$, $i=1,2$).
\\ To prove the completeness, let us consider a Cauchy sequence $\{y_j\}\in \tilde{X}$. It is sufficient to show that a subsequence is converging with respect to $\tilde{\di}$. Let $p:X_1\sqcup X_2\rightarrow \tilde{X}$ be the quotient map and, recalling that $X_1\sqcup X_2=X_1\times \{0\}\cup X_2\times\{1\}$, we can suppose without loss of generality that there exists a subsequence $\{p^{-1}(y_{j_k})\}\in X_1\times\{0\}$ (the case $\{p^{-1}(y_{j_k})\}\in X_2\times\{1\}$ being analogous). Up to identifying $(X_1\times\{0\},\hat{\di})$ with $(X_1,\di_1)$, we can infer that $\{p^{-1}(y_{j_k})\}$ is a Cauchy sequence in the complete space $(X_1,\di_1)$ and thus it converges. It is immediate to check that $\{y_{j_k}\}$ is converging in $\tilde{X}$ with respect to $\tilde{\di}$ and the proof is complete. 
\end{proof}

Starting from a metric space $(X,\mathsf{d})$, we define the \emph{cone over $X$} as the space 
$$\mathfrak{C}(X):=(X\times [0,+\infty))/{\sim} \quad \textrm{where} \quad  (x_1,r_1)\sim (x_2,r_2) \iff r_1=r_2=0 \ \mbox{or} \ r_1=r_2, x_1=x_2.$$

If $(X,\mathsf{d})$ is a pseudo-metric space, we denote by $\mathscr{M}(X)$ the space of finite, nonnegative measures on the Borel $\sigma$-algebra $\mathscr{B}(X)$, and by $\mathscr{P}(X)\subset \mathscr{M}(X)$ the space of probability measures. 
We endow $\mathscr{M}(X)$ with the weak topology, inducing the following notion of convergence:
\begin{equation}
\mu_n\rightharpoonup \mu \iff \int_X f\dd\mu_n \rightarrow \int_X f\dd\mu \ \ \textrm{for any} \ f\in C_b(X),
\end{equation} 
where $C_b(X)$ denotes the set of real, continuous and bounded functions defined on $X$.

A subset $\mathscr{K}\subset \mathscr{M}(X)$ is \emph{bounded} if $\sup_{\mu\in \mathscr{K}}\mu(X)<\infty$ and it is \emph{equally tight} if
\begin{equation}
\forall \epsilon >0 \ \ \exists K_{\epsilon}\subset X \ \textrm{compact}: \ \forall \mu\in \mathscr{K}, \ \ \mu(X\setminus K_{\epsilon})\leq \epsilon.
\end{equation}

Compactness properties with respect to the weak topology on $\mathscr{M}(X)$ are guaranteed by the following version of Prokhorov's Theorem:
\begin{theorem}\label{th: Prokhorov}
Let $X$ be a Polish space. A subset $\mathscr{K}\subset \mathscr{M}(X)$ is bounded and equally tight if and only if it is relatively compact with respect to the weak topology.
\end{theorem}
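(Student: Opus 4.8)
The plan is to reduce the statement to the classical Prokhorov theorem for \emph{probability} measures on a Polish space, which I will treat as a known ingredient (see e.g.\ \cite{AGS}). The reduction device is to adjoin an isolated point to $X$, so that every $\mu\in\mathscr{M}(X)$ with $\mu(X)\le M$ can be renormalised to a probability measure on the enlarged space, in a manner that is a homeomorphism for the respective weak topologies.

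Concretely, I would fix $M\in(0,\infty)$, set $X^{*}:=X\sqcup\{*\}$ with a metric making $X$ open and $\{*\}$ an isolated point (so that $X^{*}$ is again Polish), and consider
$$\Phi_M\colon \{\mu\in\mathscr{M}(X):\ \mu(X)\le M\}\longrightarrow \mathscr{P}(X^{*}),\qquad \Phi_M(\mu):=\tfrac1M\big(\mu + (M-\mu(X))\,\delta_{*}\big),$$
whose inverse is $\nu\mapsto M\,\nu|_{X}$. The key point — and the only place where anything specific to the unbalanced setting enters — is that $\mathbf 1\in C_b(X)$, so $\mu\mapsto\mu(X)$ is weakly continuous on $\mathscr{M}(X)$; hence $\mu_n\rightharpoonup\mu$ in $\mathscr{M}(X)$ forces $\mu_n(X)\to\mu(X)$, and testing against an arbitrary $g\in C_b(X^*)$ (writing $c:=g(*)$ and $f:=g|_X$) gives $\int g\,\dd\Phi_M(\mu_n)=\tfrac1M\big(\int f\,\dd\mu_n+c(M-\mu_n(X))\big)\to\int g\,\dd\Phi_M(\mu)$, with the symmetric computation (using $g(*)=0$) for $\Phi_M^{-1}$. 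Thus $\Phi_M$ is a homeomorphism onto $\mathscr P(X^*)$.

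Then I would record the transfer dictionary: $K\subseteq X$ is compact iff $K\cup\{*\}\subseteq X^{*}$ is compact, and $\Phi_M(\mu)\big(X^{*}\setminus(K\cup\{*\})\big)=\tfrac1M\mu(X\setminus K)$; consequently a set $\mathscr K\subseteq\{\mu(X)\le M\}$ is equally tight iff $\Phi_M(\mathscr K)$ is tight in $\mathscr P(X^{*})$ (to go backwards, intersect a compact subset of $X^{*}$ with $X$, enlarging it to contain $*$ first). Since $\Phi_M$ is a homeomorphism it also preserves relative compactness. The theorem then follows in two lines each direction. If $\mathscr K$ is relatively compact in $\mathscr M(X)$, then continuity of the mass functional makes it bounded, say with $\sup_{\mu\in\mathscr K}\mu(X)\le M<\infty$, and its weak closure still lies in the weakly closed set $\{\mu(X)\le M\}$; hence $\Phi_M(\mathscr K)$ is relatively compact in $\mathscr P(X^*)$, so tight by classical Prokhorov, so $\mathscr K$ is equally tight. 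Conversely, if $\mathscr K$ is bounded (by some $M$) and equally tight, then $\Phi_M(\mathscr K)\subseteq\mathscr P(X^{*})$ is tight, hence relatively compact by classical Prokhorov, hence $\mathscr K=\Phi_M^{-1}(\Phi_M(\mathscr K))$ is relatively compact in $\mathscr M(X)$.

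There is no deep obstacle once the classical theorem is granted; the only care needed is that total masses now vary, which is exactly what the isolated point $*$ (equivalently, repeated use of $\mathbf 1$ as a test function) is designed to absorb. If one instead wants a fully self-contained argument, one can mimic the classical proof: boundedness is as above, and for equal tightness one fixes $\epsilon>0$, uses separability of $X$ to cover $X$ by balls $B(x_i,1/n)$ for each $n$, and shows by contradiction — via the Portmanteau inequality $\mu(G)\le\liminf_k\mu_k(G)$ for open $G$, combined with $\mu_k(X)\to\mu(X)$ along a weakly convergent subsequence — that some finite sub-union $G^{(n)}$ satisfies $\mu(X\setminus G^{(n)})\le\epsilon\,2^{-n}$ uniformly over $\mathscr K$; then $K:=\bigcap_n\overline{G^{(n)}}$ is closed and totally bounded, hence compact since $X$ is complete, and $\mu(X\setminus K)\le\epsilon$ for every $\mu\in\mathscr K$. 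On this route the substantive step is precisely this last tightness-from-compactness implication.
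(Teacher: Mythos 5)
The paper does not actually prove Theorem \ref{th: Prokhorov}: it is recalled, without argument, as a known version of Prokhorov's theorem for finite nonnegative measures, so there is no internal proof to compare against; your job is therefore judged on its own terms, and your argument is correct. The reduction via the isolated point is clean: $\Phi_M$ is a bijection onto $\mathscr{P}(X^*)$ with inverse $\nu\mapsto M\,\nu|_X$, and your displayed computation in fact gives more than sequential continuity, since for every $g\in C_b(X^*)$ the functional $\mu\mapsto \int g\,\dd\Phi_M(\mu)=\tfrac1M\bigl(\int g|_X\,\dd\mu+g(*)\,(M-\mu(X))\bigr)$ is weakly continuous in $\mu$ (and symmetrically for $\Phi_M^{-1}$, extending $f\in C_b(X)$ by $0$ at $*$), so $\Phi_M$ is a homeomorphism for the weak topologies themselves and no appeal to metrizability is needed. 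The transfer of tightness works because $X$ is clopen in $X^*$ ($*$ being isolated), so $K^*\cap X$ is compact whenever $K^*\subset X^*$ is compact and $\mu(X\setminus K^*\cap X)=M\,\Phi_M(\mu)(X\setminus K^*)\le M\,\Phi_M(\mu)(X^*\setminus K^*)$; the only degenerate case worth a word is $M=0$, where $\mathscr{K}\subseteq\{0\}$ and the statement is trivial. In the forward direction you correctly extract boundedness from the weak continuity of the mass functional $\mu\mapsto\mu(X)$ before fixing $M$, and then both implications of the classical theorem on the Polish space $X^*$ do the rest. The self-contained alternative you sketch at the end is the standard covering argument and is also sound, the one substantive point being exactly the one you flag: the portmanteau inequality on open sets must be combined with convergence of the total masses along the weakly convergent subsequence, which is available here because $\mathbf 1\in C_b(X)$.
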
 

We recall that the set of measures of the form
\begin{equation}\label{def: somme di delta}
\mu=M\sum_{n=1}^N \delta_{x_n},
\end{equation}
where $M\in \mathbb{R}_+$, $N\in \mathbb{N}$ and $x_n\in X$, is dense in $\mathscr{M}(X)$. Moreover, if $X$ is separable, the measures of the form \eqref{def: somme di delta}, with $M\in \mathbb{Q}_+$ and $x_n$ in a countable dense subset of $X$, form a countable dense subset of $\mathscr{M}(X)$, proving that also the latter is a separable space.

A \emph{metric measure space} will be a triple $(X,\mathsf{d},\mu)$ where $(X,\mathsf{d})$ is a complete, separable metric space and $\mu\in \mathscr{M}(X)$.  
If there exists a point $x_0\in X$ such that 
\begin{equation}
\int_X \mathsf{d}^p(x_0,x)\dd\mu(x)<\infty,
\end{equation}
we will say that the measure $\mu\in \mathscr{P}(X)$ has finite $p$-moment. We denote by $\mathscr{P}_p(X)$ the space of measures $\nu\in\mathscr{P}(X)$ with finite $p$-moment.

The support of the measure $\mu$ is the smallest closed set $X_0:=\mathsf{supp}(\mu)$ such that $\mu(X\setminus X_0)=0$. We notice that the set $\mathsf{supp}(\mu)$ has a natural structure of metric measure space with the induced distance, $\sigma$-algebra and measure (which will be denoted in the same way). 

We say that $\varphi$ is a \emph{curve} connecting $x,y\in X$, if $\varphi:[a,b]\rightarrow X$ is a continuous map such that $\varphi(a)=x$ and $\varphi(b)=y$. The length of a curve is defined as
\begin{equation}\label{eq:defLength}
\mathsf{Length}(\varphi):=\sup \sum_{i=1}^n \mathsf{d}\big(\varphi(t_{i-1}),\varphi(t_i)\big),
\end{equation}
where the supremum is taken over all the partitions $a=t_0<t_1<...<t_n=b$.

We will always assume that a curve of finite length is parametrized by constant speed, i.e.
\begin{equation}
\mathsf{Length}(\varphi\restriction_{[s,t]})=\frac{t-s}{b-a}\mathsf{Length}(\varphi).
\end{equation}
A metric space $(X,\mathsf{d})$ is called \emph{length space} if for all $x,y\in X$
\begin{equation}
\mathsf{d}(x,y)=\inf\Big\{\mathsf{Length}(\varphi):\varphi \ \textrm{curve connecting} \ x \ \textrm{and} \ y\Big\}. 
\end{equation}
A \emph{geodesic} is a curve $\varphi:[a,b]\rightarrow X$ such that 
$$\di(\varphi(a), \varphi(t))= (t-a)\,  \di(\varphi(a), \varphi(b)), \quad \text{for all } t\in [a,b] .$$
 Notice in particular that if $\varphi$ is a geodesic then 
 $$\mathsf{Length}(\varphi)=\mathsf{d}(\varphi(a),\varphi(b)).$$ 
A metric space $(X,\mathsf{d})$ is geodesic if any pair of points $x,y\in X$ is connected by a geodesic. 

For a metric space $(X,\mathsf{d})$, the Kantorovich-Wasserstein distance $\mathcal{W}_p$ of order $p$, $p\ge 1$, is defined as follows:  for $\mu_0,\mu_1 \in \mathscr{M}(X)$ we set
\begin{equation}\label{eq:Wdef}
\mathcal{W}^p_p(\mu_0,\mu_1) := \inf_{\boldsymbol{\gamma}} \int_{X\times X} \mathsf{d}^p(x,y) \, \dd\boldsymbol{\gamma},
\end{equation}
where the infimum is taken over all $\boldsymbol{\gamma} \in \mathscr{M}(X \times X)$ with $\mu_0$ and $\mu_1$ as the first and the second marginal, i.e. $(\pi^i)_{\sharp}\boldsymbol{\gamma}=\mu_i$ where $\pi^i:X\times X\rightarrow X$ denotes the projection map $\pi^i(x_1,x_2)=x_i$, $i=1,2$. A measure $\boldsymbol{\gamma} \in \mathscr{M}(X \times X)$  achieving the minimum in \eqref{eq:Wdef} with given marginals is said a $\mathcal{W}_p$-optimal coupling for $(\mu_0,\mu_1)$.
 It is clear that $\mathcal{W}_p(\mu_1,\mu_2)=+\infty$ when $\mu_1(X)\neq \mu_2(X)$. 

If $(X,\mathsf{d})$ is complete and separable, $(\mathscr{P}_p(X),\mathcal{W}_p)$ is a complete and separable metric space. It is geodesic when $(X,\mathsf{d})$ is geodesic. Moreover, for any sequence $\mu_n\in \mathscr{P}_p(X)$ we have
\begin{equation}
\lim_{n\to \infty}\mathcal{W}_p(\mu_n,\mu)=0 \iff 
\begin{cases}
&\mu_n \ \textrm{weakly converges to} \ \mu, \\
&\mu_n \ \textrm{has uniformly} \ p\textrm{-integrable moments},
\end{cases}
\end{equation} 
where the latter means that for some (thus any) $x_0$
\begin{equation}
\lim_{R\to \infty}\limsup_{n}\int_{X\setminus B_R(x_0)}\mathsf{d}^p(x_0,x)\dd\mu_n(x)=0.
\end{equation}
For a proof of these last facts, see \cite[Theorem 6.18]{Vil}.

\subsection{Curvature-Dimension condition}

It is out of the scopes of this brief section to give a full account of the curvature-dimension condition and its properties; we will limit to schematically recalling the basic definitions involved. The interested reader is referred to  the original papers \cite{LV, St1, St2, AGS, AGMR, G15, GMS, EKS, AMS, CaMi}, the survey \cite{AmbrosioICM} and the monograph \cite{Vil}. 
\begin{itemize}
	\item For any $K\in \R, \, N\in (1,\infty), \,\theta >0$ and $t\in[0,1]$, define the distortion coefficients by
		\begin{equation*}
		\tau^{(t)}_{K,N}(\theta) := t^{\frac{1}{N}} \sigma^{(t)}_{K, N-1}(\theta)^{\frac{N-1}{N}},
		\end{equation*}
		where 
		\begin{equation*}
		\sigma^{(t)}_{K,N}(\theta):= 
			\begin{cases}
				\infty
					&\text{if $K\theta^2\geq N\pi^2$}\\
				\frac{\sin(t\theta \sqrt{K/N})}{\sin(\theta \sqrt{K/N})}
					&\text{if $0<K\theta^2< N\pi^2$}\\
				t
					&\text{if $K\theta^2=0$}\\
				\frac{\sinh(t\theta \sqrt{K/N})}{\sinh(\theta \sqrt{K/N})}
					&\text{if $K\theta^2<0$}
			\end{cases}.
		\end{equation*}
	\item For every $N\in (1,\infty)$, define  the \emph{$N$-R\'enyi entropy} functional relative to $\mu$,  ${\mathcal U}_N(\cdot\,| \mu):{\mathscr P}(X) \to [-\infty,0]$  as
		\begin{equation*}
		{\mathcal U}_N(\nu| \mu):=- \int_{X} \rho^{1-\frac{1}{N}} \dd\mu, \quad \text{where $\nu = \rho\mu+\nu^s$ and $\nu^s\perp \mu$}.
		\end{equation*}
	\item Define also the \emph{Boltzmann-Shannon entropy} functional relative to $\mu$, ${\rm Ent}(\cdot\,| \mu):{\mathscr P}(X) \to (-\infty,+ \infty]$ as
	  	\begin{equation*}
		{\rm Ent}(\nu| \mu):= \int_{X} \rho \,\log(\rho)\,  \dd\mu, \quad \text{if $\nu = \rho\mu \ll \mu$ and $\rho \log \rho \in L^1(X,\mu)$},
		\end{equation*}
		and $+\infty$ otherwise.
	\item \emph{$\mathsf{CD}(K,\infty)$ condition}: given $K\in \R$, we say that $(X,\di,\mu)$ verifies the $\mathsf{CD}(K,\infty)$ condition  if   for any pair of probability measures  
	$\nu_0,\nu_1\in {\mathscr P}_2 (X) $ with 
	$${\rm Ent}(\nu_0| \mu), {\rm Ent}(\nu_1| \mu)<+\infty,$$ 
	there exists a $\mathcal{W}_2$-geodesic $(\nu_t)_{t\in[0,1]}$ from $\nu_0$ to $\nu_1$ such that 
		\begin{equation*}
		{\rm Ent}(\nu_t| \mu)\leq (1-t)\, {\rm Ent}(\nu_0| \mu)+ t\, {\rm Ent}(\nu_1| \mu)- \frac{K}{2} t (1-t) \mathcal{W}^2_2(\mu_0, \mu_1),
		\end{equation*}
		for any $t\in [0,1]$.
	\item \emph{$\mathsf{CD}(K,N)$ condition}: given $K\in \R$, $N\in(1,\infty)$ we say that $(X,\di,\mu)$ verifies the $\mathsf{CD}(K,N)$ condition  if for any pair of probability measures  
	$\nu_0,\nu_1\in {\mathscr P}{_2}(X) $ with bounded support and with $\nu_0,\nu_1\ll \mu$, there exists a $\mathcal{W}_2$-geodesic $(\nu_t)_{t\in[0,1]}$ from $\nu_0$ to $\nu_1$ with  $\nu_t \ll \mu$, and a $\mathcal{W}_2$-optimal coupling  $\boldsymbol{\gamma} \in {\mathscr P}(X \times X)$
	  such that
		\begin{equation*}
		{\mathcal U}_{N'}(\nu_t| \mu)\leq  - \int   \left[\tau^{(1-t)}_{K,N'}(\di(x,y))\rho_0^{-\frac{1}{N'}}+
				\tau^{(t)}_{K,N'}(\di(x,y))  \rho_1^{-\frac{1}{N'}} \right]  \dd\boldsymbol{\gamma} (x,y),
		\end{equation*}
		for any $N'\geq N$, $t\in[0,1]$.
\item \emph{Consistency property: } A smooth Riemannian manifold (resp. weighted Riemannian manifold) $M$ satisfies the $\mathsf{CD}(K,N)$ condition for some $K \in \R, N\in (1,\infty)$  if and only if ${\rm dim}(M)\leq N$ and the Ricci curvature is bounded below by $K$ (resp. if and only if the $N$-Bakry-\'Emery-Ricci tensor is bounded below by $K$).
\item  Define the \emph{slope} of  a real valued function $u:X\to \R$ at the point $x\in X$ as
	\begin{equation*}
	|\nabla u|(x):= 
		\begin{cases}
			\limsup_{y\to x} \frac{|u(x)-u(y)|}{\di(x,y)} 	&\text{if $x$ is not isolated}		\\
			0		&\text{otherwise}.
		\end{cases}
	\end{equation*}
We denote with ${\rm LIP} (X)$ the space of Lipschitz functions on $(X,\di)$.
\item  Let $f\in L^2(X, \mu)$. The \emph{Cheeger energy} of $f$ is defined as
	\begin{equation*}
	\mathsf{Ch}(f):= \inf \left\{\liminf_{n\to \infty}\frac{1}{2}\int |\nabla f_n|^2  \dd\mu \, |\, 	f_n\in {\rm LIP}(X)\cap L^2(X,\mu), \|f_n-f\|_{L^2}\to 0 \right\}.
	\end{equation*}
	One can check that the Cheeger energy $\mathsf{Ch}:L^2(X,\mu)\to [0,\infty]$ is convex and lower semi-continuous. Thus it admits an $L^2$-gradient flow, called \emph{heat flow}.
	\item The metric measure space $(X,\di,\mu)$ is said \emph{infinitesimally Hilbertian} if  $\mathsf{Ch}$ is a quadratic form, i.e. it satisfies the parallelogram identity.
	\\One can check that $(X,\di,\mu)$ is infinitesimally Hilbertian if and only if the heat flow for every positive time is a linear map from $L^2(X,\mu)$ to $L^2(X,\mu)$.
	\\If $(X,\di,\mu)$ is the metric measure space associated to a smooth Finsler manifold, one can check that  $(X,\di,\mu)$ is  infinitesimally Hilbertian if and only if the manifold is actually Riemannian.
\item Given $K\in \R$ and $N\in (1,\infty]$,  we say that $(X,\di,\mu)$ verifies the \emph{$\mathsf{RCD}(K,N)$ condition} if it satisfies the $\mathsf{CD}(K,N)$ condition and it is infinitesimally Hilbertian.	
\item \emph{Pointed measured Gromov-Hausdorff convergence}: Let $(X_n,\di_n,\mu_n)$, $n\in \mathbb{N}\cup \{\infty\}$, be  a sequence of  metric measure spaces and let $\bar{x}_n\in X_n$ for every $n\in \mathbb{N}\cup \{\infty\}$ be a sequence of reference points. 
We say that $(X_n,\di_n,\mu_n, \bar x_n)\to
(X_\infty,\di_\infty,\mu_\infty, \bar x_\infty)$ in the 
pointed measured Gromov Hausdorff (pmGH) sense, provided for any $\varepsilon,R>0$ there exists $N({\varepsilon,R})\in \mathbb{N}$ such that for all $n\geq N({\varepsilon,R})$ there exists a Borel map $f^{R,\varepsilon}_n:B_R(\bar x_n)\to X_\infty$ such that
\begin{itemize}
\item $f^{R,\varepsilon}_n(\bar x_n)=\bar x_\infty$,
\item $\sup_{x,y\in B_R(\bar x_n)}|\di_n(x,y)-\di_\infty(f^{R,\varepsilon}_n(x),f^{R,\varepsilon}_n(y))|\leq\varepsilon$,
\item the $\varepsilon$-neighbourhood of $f^{R,\varepsilon}_n(B_R(\bar x_n))$ contains $B_{R-\varepsilon}(\bar x_\infty)$,
\item  $(f^{R,\varepsilon}_n)_\sharp(\mu_n\llcorner{B_R(\bar x_n)})$ weakly converges to $\mu_\infty\llcorner{B_R(x_\infty)}$ as $n\to\infty$, for a.e. $R>0$.
\end{itemize}
If in addition there exists $\bar{R}>0$ such that ${\rm diam}(X_n)\leq \bar{R}$ for every  $n\in \mathbb{N}\cup \{\infty\}$, then we say that $(X_n,\di_n,\mu_n)\to
(X_\infty,\di_\infty,\mu_\infty)$ in the  \emph{measured Gromov Hausdorff (mGH for short)} sense. In this case it is enough to consider only $R=\bar{R}$ in the above requirements.

\item  \emph{Stability}:  Let $K\in \R$ and $N\in (1,\infty]$ be given.  Assume that $(X_n, \di_n, \mu_n)$ satisfies  $\mathsf{CD}(K,N)$ (resp. $\mathsf{RCD}(K,N)$), for every $n\in \mathbb{N}$, and that  $(X_n, \di_n, \mu_n, \bar{x}_n )\to  (X_\infty,\di_\infty,\mu_\infty, \bar x_\infty)$ in the pmGH sense. Then 	$(X_\infty, \di_\infty, \mu_\infty)$ satisfies 	 $\mathsf{CD}(K,N)$   (resp. $\mathsf{RCD}(K,N)$)  as well.
\end{itemize}

\subsection{Entropy functionals}
In this section we assume that $X$ is a Polish space.

A function $F:[0,+\infty)\rightarrow [0,+\infty]$ belongs to the class $\Gamma_0({\mathbb{R}_{+}})$ of the \emph{admissible entropy functions} if $F$ is convex, lower semicontinuous and $F(1)=0$.
We define the recession constant as
$$F'_{\infty}=\lim_{s \to \infty}\frac{F(s)}{s},$$
and we say that $F$ is superlinear if $F'_{\infty}=+\infty$.

We also define the perspective function induced by $F\in \Gamma_0(\mathbb{R}_+)$ as the function $\hat{F}:[0,+\infty)\times [0,+\infty)\rightarrow [0,+\infty]$, given by
\begin{equation}\label{def: funzione prospettiva}
\hat{F}(r,t):=\begin{cases} F\big(\frac{r}{t}\big)t &\mbox{if} \ t>0,\\
F'_{\infty}r &\mbox{if} \ t=0.\end{cases}
\end{equation}
The function 
\begin{equation}\label{def: reverse entropy}
R:[0,+\infty)\rightarrow [0,+\infty], \qquad R(t):=\hat{F}(1,t)
\end{equation}
is called \emph{reverse entropy}.

Let $F\in \Gamma_{0}(\mathbb{R}_{+})$ be an admissible entropy function. The \emph{$F$-divergence} (also called \emph{Csisz\'ar's divergence} or \emph{relative entropy}) is the functional $D_F:\mathcal{M}(X)\times \mathcal{M}(X)\rightarrow [0,+\infty]$ defined by
\begin{equation}\label{def: F divergenza}
D_F(\gamma||\mu):=\int_X F(\sigma)\mathrm{d} \mu + F'_{\infty}\gamma^{\perp}(X), \qquad \gamma=\sigma\mu+\gamma^{\perp},
\end{equation}
where $\gamma=\sigma\mu+\gamma^{\perp}$ is the Lebesgue's decomposition of the measure $\gamma$ with respect to $\mu$.
When $F$ is superlinear $D_F(\gamma||\mu)=+\infty$ if $\gamma$ has a singular part with respect to $\mu$. Moreover, it is clear that $D_F(\mu||\mu)=0$. 

We now collect some useful properties of the relative entropies. For the proof see \cite[Section 2.4]{LMS}.

\begin{lemma}\label{lem: proprieta funzionale entropia}
The functional $D_F$ is jointly convex and lower semicontinuous in $\Mi(X)\times \Mi(X)$. More generally, if $F\in \Gamma_0(\R _+)$ is the pointwise limit of an increasing sequence $(F_n)\subset \Gamma_0(\R _+)$ and $\gamma,\mu\in \Mi(X)$ are the weak limit of a sequence $(\gamma_n,\mu_n)\subset \Mi(X)\times \Mi(X)$ then we have
$$\liminf D_{F_n}(\gamma_n||\mu_n)\geq D_F(\gamma||\mu).$$
\end{lemma}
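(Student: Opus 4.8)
The plan is to first establish the joint lower semicontinuity of $D_F(\cdot||\cdot)$ on $\Mi(X)\times\Mi(X)$ via a duality (variational) representation, and then to bootstrap from this the more general statement about an increasing sequence $F_n \uparrow F$. The key tool is the dual formulation of the $F$-divergence: for $F\in\Gamma_0(\R_+)$ with Legendre conjugate $F^*(\phi):=\sup_{s\ge 0}(\phi s - F(s))$, one has the representation
\begin{equation*}
D_F(\gamma||\mu)=\sup\left\{\int_X \phi\,\dd\gamma - \int_X F^*(\phi)\,\dd\mu \; : \; \phi\in C_b(X),\ \sup_X \phi < F'_\infty\right\},
\end{equation*}
or a suitable variant allowing $\phi$ to approach $F'_\infty$, which correctly accounts for the recession term $F'_\infty\gamma^\perp(X)$ in \eqref{def: F divergenza}. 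Granting this identity (which is the content of \cite[Section 2.4]{LMS} and which I would quote rather than reprove), joint lower semicontinuity is immediate: each functional $(\gamma,\mu)\mapsto \int\phi\,\dd\gamma-\int F^*(\phi)\,\dd\mu$ is, for fixed $\phi\in C_b(X)$ with $F^*(\phi)$ also bounded and continuous, weakly continuous on $\Mi(X)\times\Mi(X)$ by definition of weak convergence; hence $D_F$, being a supremum of weakly continuous functionals, is weakly lower semicontinuous. Joint convexity follows either directly from the dual formula (a supremum of affine functionals is convex) or, more elementarily, from convexity of the perspective function $\hat F$ in \eqref{def: funzione prospettiva} together with $D_F(\gamma||\mu)=\int_X \hat F\big(\frac{\dd\gamma}{\dd\lambda},\frac{\dd\mu}{\dd\lambda}\big)\,\dd\lambda$ for any common reference measure $\lambda$ (e.g. $\lambda=\gamma+\mu$), using that $\hat F$ is jointly convex and $1$-homogeneous so the integral does not depend on the choice of $\lambda$.

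For the second, more general assertion, suppose $F_n\uparrow F$ pointwise with $F_n\in\Gamma_0(\R_+)$, and $(\gamma_n,\mu_n)\rightharpoonup(\gamma,\mu)$. The plan is to fix $m\in\mathbb N$ and observe that for all $n\ge m$ we have $F_n\ge F_m$ pointwise, hence $D_{F_n}(\gamma_n||\mu_n)\ge D_{F_m}(\gamma_n||\mu_n)$ — this monotonicity in the entropy function is clear from \eqref{def: F divergenza} since $F_n\ge F_m$ forces both $\int F_n\,\dd\mu\ge\int F_m\,\dd\mu$ and $(F_n)'_\infty\ge (F_m)'_\infty$. Taking $\liminf_{n\to\infty}$ and using the already-established joint lower semicontinuity of $D_{F_m}$ gives
\begin{equation*}
\liminf_{n\to\infty} D_{F_n}(\gamma_n||\mu_n)\ \ge\ \liminf_{n\to\infty} D_{F_m}(\gamma_n||\mu_n)\ \ge\ D_{F_m}(\gamma||\mu).
\end{equation*}
Since $m$ was arbitrary, it remains to let $m\to\infty$ and show $\sup_m D_{F_m}(\gamma||\mu)=D_{F_m}(\gamma||\mu)\to D_F(\gamma||\mu)$, i.e. a monotone convergence statement for fixed arguments $(\gamma,\mu)$. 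Writing $\gamma=\sigma\mu+\gamma^\perp$, this splits into $\int_X F_m(\sigma)\,\dd\mu\uparrow\int_X F(\sigma)\,\dd\mu$ (by the monotone convergence theorem, since $F_m(\sigma)\uparrow F(\sigma)$ $\mu$-a.e. and these are nonnegative) and $(F_m)'_\infty\uparrow F'_\infty$, the latter because $(F_m)'_\infty=\sup_{s>0}\frac{F_m(s)}{s}$ (the sup being attained in the limit by convexity) increases to $\sup_{s>0}\sup_m\frac{F_m(s)}{s}=\sup_{s>0}\frac{F(s)}{s}=F'_\infty$, using that the supremum over $s$ and over $m$ can be exchanged.

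The main obstacle, and the step where care is genuinely needed rather than routine bookkeeping, is the correct handling of the singular part and the recession constant $F'_\infty$ — both in setting up a dual representation that faithfully reproduces the term $F'_\infty\gamma^\perp(X)$ (the subtlety being whether test functions $\phi$ are allowed to reach the threshold $F'_\infty$, which matters precisely when $\gamma^\perp\ne 0$), and in the interchange of limits $(F_m)'_\infty\uparrow F'_\infty$ when some $F_m$ are superlinear and $F$ is not, or vice versa. One clean way to sidestep the duality subtleties entirely is to work throughout with the perspective-function formula $D_F(\gamma||\mu)=\int \hat F(\tfrac{\dd\gamma}{\dd\lambda},\tfrac{\dd\mu}{\dd\lambda})\,\dd\lambda$ with $\lambda=\gamma+\mu$ (or $\gamma_n+\mu_n$), combined with weak lower semicontinuity results for integral functionals of measures with jointly convex, $1$-homogeneous, lower semicontinuous integrands (in the spirit of Ambrosio–Fusco–Pallara or Buttazzo); but since the paper explicitly says "For the proof see \cite[Section 2.4]{LMS}", the intended solution is simply to cite that reference for the basic joint convexity/lower semicontinuity of $D_F$ and then run the short monotonicity argument above for the $F_n\uparrow F$ refinement.
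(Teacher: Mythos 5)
The paper gives no proof of this lemma at all: it only cites \cite[Section 2.4]{LMS}. Your plan — quote that reference (or the perspective-function representation) for the joint convexity and lower semicontinuity of $D_F$, then obtain the refinement with $F_n\uparrow F$ by fixing $m$, using $F_n\ge F_m$ for $n\ge m$ to get $D_{F_n}(\gamma_n||\mu_n)\ge D_{F_m}(\gamma_n||\mu_n)$, passing to the limit in $n$ by lower semicontinuity of $D_{F_m}$, and finally letting $m\to\infty$ by monotone convergence of $D_{F_m}(\gamma||\mu)$ — is a faithful reconstruction of the argument in \cite{LMS}, and the monotonicity of $D_F$ in $F$ and the monotone convergence $\int F_m(\sigma)\,\dd\mu\uparrow\int F(\sigma)\,\dd\mu$ are both correct.

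One step, however, is wrong as written: the identity $(F_m)'_\infty=\sup_{s>0}F_m(s)/s$ fails in general for $F\in\Gamma_0(\R_+)$, since $s\mapsto F(s)/s$ need not be monotone; for instance $F(s)=|s-1|$ gives $\sup_{s>0}F(s)/s=+\infty$ (as $s\to0^+$) while $F'_\infty=1$, so your chain of equalities would produce the wrong recession constant precisely for the non-superlinear entropies that appear later in the paper. The fix is immediate and keeps your exchange-of-suprema argument intact: since $F(1)=0$ and $F$ is convex, the difference quotient $s\mapsto F(s)/(s-1)$ is nondecreasing on $(1,\infty)$, so $F'_\infty=\sup_{s>1}F(s)/(s-1)$; hence $\sup_m(F_m)'_\infty=\sup_m\sup_{s>1}F_m(s)/(s-1)=\sup_{s>1}F(s)/(s-1)=F'_\infty$, and as $(F_m)'_\infty$ is nondecreasing in $m$ this yields $(F_m)'_\infty\uparrow F'_\infty$. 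With this correction (and deferring, as you and the paper both do, the exact two-function dual formula of \cite[Section 2.4]{LMS} that accounts for the singular term $F'_\infty\gamma^\perp(X)$), your proof is complete.
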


\begin{lemma}\label{lem: compattezza sottolivelli entropia}
If $\mathcal{K}\subset \Mi(X)$ is bounded and $F'_{\infty}>0$ then the set 
\begin{equation}
\mathbf{K}_C:=\{\gamma\in \Mi(X): D_F(\gamma||\mu)\leq C, \ \textrm{for some} \ \mu\in \mathcal{K} \} 
\end{equation}
is bounded for every $C\geq 0$. Moreover, if $\mathcal{K}$ is also equally tight and $F$ is superlinear, then $\mathbf{K}_C$ is equally tight for every $C\geq 0$.
\end{lemma}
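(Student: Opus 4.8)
The plan is to exploit the superlinearity of $F$ together with the basic structure of the $F$-divergence as given in \eqref{def: F divergenza}. First I would prove the boundedness statement, which only uses $F'_\infty>0$. Fix $\gamma\in\mathbf{K}_C$, so that $D_F(\gamma||\mu)\le C$ for some $\mu\in\mathcal{K}$, and write the Lebesgue decomposition $\gamma=\sigma\mu+\gamma^\perp$. Since $F\in\Gamma_0(\R_+)$ is convex with $F(1)=0$, it is bounded below by an affine function through $(1,0)$; concretely, using the subgradient at $s=1$ one gets $F(s)\ge F'_\infty\,(s-1)$ for all $s$ large (more precisely, convexity plus $F(s)/s\uparrow F'_\infty$ gives $F(s)\ge F'_\infty s - C_0$ for a suitable constant $C_0$ depending only on $F$, on the region $s\ge 1$, while $F\ge 0$ everywhere). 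Combining,
\begin{equation*}
C\ge D_F(\gamma||\mu)=\int_X F(\sigma)\dd\mu+F'_\infty\gamma^\perp(X)\ge F'_\infty\int_{\{\sigma\ge 1\}}\sigma\dd\mu - C_0\,\mu(X)+F'_\infty\gamma^\perp(X).
\end{equation*}
Since $\int_{\{\sigma<1\}}\sigma\dd\mu\le\mu(X)$, this yields $\gamma(X)=\int_X\sigma\dd\mu+\gamma^\perp(X)\le \mu(X)+\frac{1}{F'_\infty}\big(C+C_0\,\mu(X)\big)$, and the right-hand side is bounded uniformly in $\gamma$ because $\sup_{\mu\in\mathcal{K}}\mu(X)<\infty$. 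Hence $\mathbf{K}_C$ is bounded.

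For the tightness statement I would now additionally use that $\mathcal{K}$ is equally tight and $F$ is superlinear, i.e.\ $F'_\infty=+\infty$. Fix $\epsilon>0$. By equal tightness of $\mathcal{K}$, for each $j\in\mathbb{N}$ there is a compact $K_j\subset X$ with $\mu(X\setminus K_j)\le\epsilon 2^{-j}$ for all $\mu\in\mathcal{K}$. The point is the elementary inequality, valid for any Borel set $A$: with $\gamma=\sigma\mu+\gamma^\perp$,
\begin{equation*}
\gamma(A)=\int_A\sigma\dd\mu+\gamma^\perp(A)\le \frac{1}{c}\int_A F(\sigma)\dd\mu+\Big(\sup_{0\le s\le L}s\Big)\mu(A)+\frac{1}{F'_\infty}\cdot F'_\infty\gamma^\perp(A)
\end{equation*}
is not quite what I want directly; instead I would use the superlinear bound in the form: for every $L>0$ there is $c_L>0$ with $s\le \frac{1}{c_L}F(s)+L$ for all $s\ge 0$ (take $c_L$ so that $F(s)/s\ge c_L$ whenever $s\ge L/c_L$, possible since $F$ is superlinear and $F\ge 0$). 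Applying this with $A=X\setminus K_j$ gives
\begin{equation*}
\gamma(X\setminus K_j)\le \frac{1}{c_L}\int_{X\setminus K_j}F(\sigma)\dd\mu+L\,\mu(X\setminus K_j)+\gamma^\perp(X\setminus K_j)\le \frac{C}{c_L}+L\,\epsilon 2^{-j}+\gamma^\perp(X\setminus K_j),
\end{equation*}
and here the genuine difficulty is the singular term $\gamma^\perp(X\setminus K_j)$, which is not controlled by tightness of $\mu$.

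To handle $\gamma^\perp$ I would argue that $\gamma^\perp$ itself ranges in a tight family: since $F'_\infty\gamma^\perp(X)\le D_F(\gamma||\mu)\le C$ and $F'_\infty=+\infty$, in fact $\gamma^\perp=0$ whenever $C<\infty$ — superlinearity forces the singular part to vanish. Thus for $\gamma\in\mathbf{K}_C$ we simply have $\gamma=\sigma\mu\ll\mu$, the last term disappears, and the previous display reads $\gamma(X\setminus K_j)\le \frac{C}{c_L}+L\epsilon 2^{-j}$ for every $L>0$ and every $j$. Choosing $L=\epsilon 2^{-j}/\max(1,C)$ makes the second term at most $\epsilon 2^{-j}$, while the first term $C/c_L$ is small once $j$ is large enough? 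That is not automatic, so more care is needed: instead, fix the target $\epsilon$, pick $L$ large enough that $L\epsilon\le$ is irrelevant, and rather choose $c_L$; the cleanest route is: given $\epsilon>0$ choose $L$ with $\tfrac{C}{c_L}\le \epsilon/2$ (possible since $c_L\to\infty$ as $L\to\infty$ by superlinearity), then choose a single compact $K\subset X$ with $\mu(X\setminus K)\le \tfrac{\epsilon}{2L}$ for all $\mu\in\mathcal{K}$, obtaining $\gamma(X\setminus K)\le \tfrac{C}{c_L}+L\mu(X\setminus K)\le\epsilon$ for all $\gamma\in\mathbf{K}_C$. This proves $\mathbf{K}_C$ is equally tight. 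The main obstacle in the whole argument is precisely the bookkeeping with the singular part and the order of quantifiers $L$ versus the compact set; once one observes that superlinearity kills $\gamma^\perp$ on a sublevel set and that $c_L\to\infty$, everything reduces to the above two-line estimate.
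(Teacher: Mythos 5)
The paper itself does not prove this lemma (it is stated with a reference to \cite[Section 2.4]{LMS}), so the comparison is with the standard argument there, which is essentially the route you take: control $\int_X\sigma\,\dd\mu$ by splitting at a threshold and using the linear growth of $F$ at infinity, and for tightness use superlinearity to kill the singular part and to trade the mass of $\gamma$ against $\int_X F(\sigma)\,\dd\mu$ plus a small multiple of $\mu$. Your tightness half is sound: $F'_\infty=+\infty$ together with $D_F(\gamma||\mu)\le C$ forces $\gamma^\perp=0$; the pointwise bound $s\le \frac{1}{c_L}F(s)+L$ (valid for all sufficiently large $L$, with $c_L\to\infty$ as $L\to\infty$) gives $\gamma(X\setminus K)\le \frac{C}{c_L}+L\,\mu(X\setminus K)$; and the order of quantifiers (first fix $L$ so that $C/c_L\le\epsilon/2$, then one compact $K$ from the equal tightness of $\mathcal{K}$, uniform over the $\mu$ attached to each $\gamma$) is handled correctly.

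The boundedness half, however, rests on a false intermediate inequality: $F(s)\ge F'_\infty s-C_0$ with $C_0$ finite does not follow from convexity. It is meaningless when $F'_\infty=+\infty$ (the case relevant to this paper, where $F$ is superlinear), and it can fail even for finite $F'_\infty$: for $F(s)=s-1-\log s$ one has $F'_\infty=1$ but $F(s)-s\to-\infty$, so no such $C_0$ exists. The fix is standard and costs nothing: pick any finite $c\in(0,F'_\infty)$; since $F(s)/s\to F'_\infty>c$ there is $s_0$ with $F(s)\ge c\,s$ for $s\ge s_0$, hence $F(s)\ge c\,s-c\,s_0$ for all $s\ge 0$ because $F\ge 0$. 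Running your estimate with $c$ in place of $F'_\infty$, and bounding the singular mass separately by $\gamma^\perp(X)\le C/F'_\infty\le C/c$, yields $\gamma(X)\le s_0\,\mu(X)+2C/c$, which is uniform over $\gamma\in\mathbf{K}_C$ since $\sup_{\mu\in\mathcal{K}}\mu(X)<\infty$. A similar (harmless) quantifier slip occurs in the tightness part: the claim ``for every $L>0$ there is $c_L>0$ with $s\le\frac{1}{c_L}F(s)+L$ for all $s$'' fails for small $L$ (take $s=1$ and $L<1$, or $L$ inside the zero set of $F$, which may extend beyond $1$); it holds for all $L$ larger than the right endpoint of $\{F=0\}$, which is all you use, so the argument stands once these constants are stated correctly.
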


The last lemma of this section shows an invariance result for the $F$-divergences.

\begin{lemma}\label{lem: invariance divergence under injection}
Let $F\in \Gamma_0(\mathbb{R_{+}})$ be an admissible entropy function, $X,Y$ be two Polish spaces and $f:X\rightarrow Y$ be a Borel injective map.  Then, for any $\gamma,\mu\in \Mi(X)$ it holds
\begin{equation}
D_F(\gamma||\mu)=D_F(f_{\sharp}\gamma||f_{\sharp}\mu).
\end{equation}
\end{lemma}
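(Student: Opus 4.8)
The plan is to reduce everything to the scalar identity $D_F(\gamma\|\mu)=\int_Y F(\sigma)\,\dd\mu + F'_\infty\gamma^\perp(Y)$ by tracking how the Lebesgue decomposition of $\gamma$ with respect to $\mu$ behaves under pushforward by an injective Borel map. First I would recall that, since $X,Y$ are Polish and $f$ is Borel and injective, $f$ maps Borel sets of $X$ to Borel sets of $Y$ (Lusin--Souslin theorem), so $f$ restricted to $X$ is a Borel isomorphism onto its image $f(X)\in\mathscr B(Y)$, with Borel inverse $g:f(X)\to X$. In particular the pushforward $f_\sharp$ is a bijection between $\mathscr M(X)$ and the measures in $\mathscr M(Y)$ concentrated on $f(X)$, with inverse $g_\sharp$, and it preserves all the relevant structure: total mass, mutual singularity, absolute continuity, and it commutes with Lebesgue decomposition.

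The key step is the following. Write the Lebesgue decomposition $\gamma=\sigma\mu+\gamma^\perp$ with $\gamma^\perp\perp\mu$, say $\gamma^\perp$ concentrated on a Borel set $N$ with $\mu(N)=0$. I claim that $f_\sharp\gamma = (\sigma\circ g)\,f_\sharp\mu + f_\sharp\gamma^\perp$ is precisely the Lebesgue decomposition of $f_\sharp\gamma$ with respect to $f_\sharp\mu$. Indeed: (i) $f_\sharp\gamma^\perp$ is concentrated on $f(N)$, which is Borel, and $(f_\sharp\mu)(f(N))=\mu(f^{-1}(f(N)))=\mu(N)=0$ by injectivity, so $f_\sharp\gamma^\perp\perp f_\sharp\mu$; (ii) for any Borel $B\subseteq Y$, using the change-of-variables formula and $f^{-1}(B)\cap X = g(B\cap f(X))$ one checks $\int_B (\sigma\circ g)\,\dd(f_\sharp\mu) = \int_{f^{-1}(B)} \sigma\,\dd\mu = (\sigma\mu)(f^{-1}(B)) = (f_\sharp(\sigma\mu))(B)$, so the absolutely continuous part of $f_\sharp\gamma$ is $(\sigma\circ g)\,f_\sharp\mu$ with density $\sigma\circ g$ (note $\sigma$ can be taken to vanish off $f^{-1}(\mathrm{supp}$-type considerations are not needed since densities are only defined $\mu$-a.e.). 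By uniqueness of the Lebesgue decomposition this identifies $\sigma\circ g = \frac{\dd(f_\sharp\gamma)}{\dd(f_\sharp\mu)}$ and $f_\sharp\gamma^\perp = (f_\sharp\gamma)^\perp$.

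Granting this, the conclusion is a direct computation:
\begin{equation*}
D_F(f_\sharp\gamma\|f_\sharp\mu)=\int_Y F(\sigma\circ g)\,\dd(f_\sharp\mu)+F'_\infty\,(f_\sharp\gamma^\perp)(Y)=\int_X F(\sigma)\,\dd\mu+F'_\infty\,\gamma^\perp(X)=D_F(\gamma\|\mu),
\end{equation*}
where in the middle equality I used the change-of-variables formula $\int_Y F(\sigma\circ g)\,\dd(f_\sharp\mu)=\int_X F(\sigma\circ g\circ f)\,\dd\mu=\int_X F(\sigma)\,\dd\mu$ (since $g\circ f=\mathrm{id}_X$) together with $(f_\sharp\gamma^\perp)(Y)=\gamma^\perp(X)$. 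One should also dispatch the degenerate case: if $\gamma$ is not absolutely continuous with respect to $\mu$ then $\gamma^\perp\neq 0$, hence $(f_\sharp\gamma)^\perp=f_\sharp\gamma^\perp\neq 0$, so $f_\sharp\gamma\not\ll f_\sharp\mu$ and both sides equal $\int F(\sigma)\dd\mu + F'_\infty\gamma^\perp(X)$ consistently (in the superlinear case, both are $+\infty$ iff $\gamma^\perp\neq 0$).

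The main obstacle is the measure-theoretic point that $f$ (Borel, injective, between Polish spaces) has Borel image and Borel inverse on its image — i.e. the Lusin--Souslin theorem — which is what makes $f_\sharp$ genuinely invertible and lets the Lebesgue decomposition be transported cleanly; without injectivity the statement is false, so this is exactly where the hypothesis is used. Everything else is routine bookkeeping with pushforwards and the change-of-variables formula.
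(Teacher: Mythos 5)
Your proof is correct and follows essentially the same route as the paper's: both identify the Lebesgue decomposition of $f_\sharp\gamma$ with respect to $f_\sharp\mu$ as the push-forward of the decomposition $\gamma=\sigma\mu+\gamma^\perp$ (using uniqueness of the decomposition and the change-of-variables formula), and then compute $D_F$ directly. The only difference is cosmetic: you make explicit the Lusin--Souslin point that an injective Borel map between Polish spaces is a Borel isomorphism onto its Borel image, which the paper leaves implicit behind the reduction ``we may assume $f$ is bijective''; also note the small typo in your opening display, where the domain should be $X$ rather than $Y$.
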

\begin{proof}
Let us consider the Lebesgue's decompositions 
$$\gamma=\sigma \mu + \gamma^{\perp} \qquad\textrm{and} \qquad f_{\sharp}\gamma=\tilde{\sigma}f_{\sharp}\mu+\tilde{\gamma}^{\perp}.$$
Since $f_{\sharp}\gamma$ and $f_{\sharp}\mu$ have support contained in $f(X)$, we can suppose without loss of generality that $f$ is bijective. 
\\For any Borel set $A\subset X$ we have
\begin{multline}
\int_A\sigma\,\dd \mu +\gamma^{\perp}(A)=\gamma(A)=\gamma(f^{-1}(f(A)))=f_{\sharp}\gamma(f(A))\\
=\int_{f(A)} \tilde{\sigma}\,\dd f_{\sharp}\mu+\tilde{\gamma}^{\perp}(f(A))=\int_A  \tilde{\sigma}\circ f\,\dd \mu + \tilde{\gamma}^{\perp}(f(A)).
\end{multline}
By the uniqueness of the Lebesgue's decomposition (see \cite[Lemma 2.3]{LMS}) it follows that $\sigma=\tilde{\sigma}\circ f$ up to $(\mu + \gamma)$-negligible sets and $\gamma^{\perp}(X)=\tilde{\gamma}^{\perp}(f(X))=\tilde{\gamma}^{\perp}(Y)$.
In particular
$$D_F(f_{\sharp}\gamma||f_{\sharp}\mu):=\int_Y F(\tilde{\sigma})\mathrm{d} f_{\sharp}\mu + F'_{\infty}\tilde{\gamma}^{\perp}(Y)=\int_X F(\tilde{\sigma}\circ f)\dd \mu + F'_{\infty}\gamma^{\perp}(X)=D_F(\gamma||\mu).$$
\end{proof}

\section{Entropy-Transport problem and distances}

\noindent
Let $\boldsymbol{\gamma}\in \Mi(X\times X)$. In the sequel we denote by $\gamma_i:=(\pi^i)_{\sharp}\boldsymbol{\gamma}$ the marginals of $\boldsymbol{\gamma}$. 
\\We are now ready to define the Entropy-Transport problem.

\begin{definition}\label{def: primal formulation ET}
Let $F\in \Gamma_{0}(\mathbb{R}_{+})$ and let $\boldsymbol{\mathrm c}:X\times X\rightarrow [0,+\infty]$ be a lower semicontinuous function. The Entropy-Transport functional between the measures $\mu_1,\mu_2\in \mathscr{M}(X)$ is the functional 
\begin{equation}\label{def: funzionale di trasporto entropico}
\begin{aligned}
&\mathcal{ET}(\,\cdot \,||\mu_1,\mu_2):\mathscr{M}(X\times X)\rightarrow [0,+\infty], \\
&\mathcal{ET}(\boldsymbol{\gamma}||\mu_1,\mu_2):=D_F(\gamma_1||\mu_1) + D_F(\gamma_2||\mu_2) + \int_{X\times X}\boldsymbol{\mathrm c}(x_1,x_2)\dd \boldsymbol{\gamma}(x_1,x_2).
\end{aligned}
\end{equation} 
\smallskip
We define the Entropy-Transport problem between $\mu_1$ and $\mu_2$ as the minimization problem
\begin{equation}\label{def: problema di trasporto entropico}
\et(\mu_1,\mu_2):=\inf_{\boldsymbol{\gamma}\in \Mi(X\times X)}\mathcal{ET}(\boldsymbol{\gamma}||\mu_1,\mu_2).
\end{equation}
To highlight the role of the entropy function $F$ and the cost function $\boldsymbol{\mathrm c}$, we also say that $\et$ is the cost of the Entropy-Transport problem induced by $(F,c)$.
\end{definition}

We are particularly interested in cost functions of the form $\boldsymbol{\mathrm c}(x_1,x_2)=\ell(\di(x_1,x_2))$ for a certain function $\ell:[0,\infty)\rightarrow [0,\infty]$.

In the next Proposition we recall some properties of Entropy-Transport problems (for a proof see \cite{LMS}).
\begin{proposition}\label{prop: proprieta ET}
Let us suppose that the Entropy-Transport problem between the measures $\mu_1,\mu_2\in \mathscr{M}(X)$ is feasible, i.e. there exists $\boldsymbol{\gamma}\in \Mi(X\times X)$ such that $\mathcal{ET}(\boldsymbol{\gamma}||\mu_1,\mu_2)<\infty$, and that $F$ is superlinear. Then the infimum in \eqref{def: problema di trasporto entropico} can be replaced by a minimum and the set of minimizers is a compact convex subset of $\Mi(X\times X)$. Moreover, the functional $\et$ is convex and positively $1$-homogeneous (thus subadditive). 
\end{proposition}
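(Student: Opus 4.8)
The plan is to deduce all four assertions by the direct method of the calculus of variations, using as black boxes the joint convexity and weak lower semicontinuity of $D_F$ (Lemma~\ref{lem: proprieta funzionale entropia}), the compactness of its sublevel sets (Lemma~\ref{lem: compattezza sottolivelli entropia}), and Prokhorov's Theorem~\ref{th: Prokhorov}. For the \emph{existence of a minimizer}, take a minimizing sequence $\boldsymbol{\gamma}_n\in\Mi(X\times X)$, $\mathcal{ET}(\boldsymbol{\gamma}_n||\mu_1,\mu_2)\to\et(\mu_1,\mu_2)<\infty$; discarding finitely many terms we may assume $\mathcal{ET}(\boldsymbol{\gamma}_n||\mu_1,\mu_2)\le C$, hence $D_F(\gamma_{i,n}||\mu_i)\le C$ for $i=1,2$ and all $n$, where $\gamma_{i,n}:=(\pi^i)_\sharp\boldsymbol{\gamma}_n$. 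Since $X$ is Polish the singleton $\{\mu_i\}$ is bounded and equally tight in $\Mi(X)$, and since $F$ is superlinear ($F'_\infty=+\infty>0$), Lemma~\ref{lem: compattezza sottolivelli entropia} yields that $\{\gamma_{i,n}\}_n$ is bounded and equally tight. Then $\boldsymbol{\gamma}_n(X\times X)=\gamma_{1,n}(X)$ is bounded, and if $K_i\subset X$ is compact with $\sup_n\gamma_{i,n}(X\setminus K_i)\le\varepsilon$ then $\boldsymbol{\gamma}_n\big((X\times X)\setminus(K_1\times K_2)\big)\le\gamma_{1,n}(X\setminus K_1)+\gamma_{2,n}(X\setminus K_2)\le2\varepsilon$, so $\{\boldsymbol{\gamma}_n\}_n$ is bounded and equally tight; by Theorem~\ref{th: Prokhorov}, along a subsequence $\boldsymbol{\gamma}_n\rightharpoonup\boldsymbol{\gamma}$, and then $\gamma_{i,n}\rightharpoonup\gamma_i:=(\pi^i)_\sharp\boldsymbol{\gamma}$ because pulling back a $C_b(X)$ function by the continuous map $\pi^i$ gives a $C_b(X\times X)$ function. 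Finally $D_F(\cdot||\mu_i)$ is weakly lower semicontinuous by Lemma~\ref{lem: proprieta funzionale entropia} and $\boldsymbol{\gamma}\mapsto\int\boldsymbol{\mathrm c}\,\dd\boldsymbol{\gamma}$ is weakly lower semicontinuous since the nonnegative l.s.c. $\boldsymbol{\mathrm c}$ is an increasing pointwise supremum of bounded continuous functions; summing, $\mathcal{ET}(\boldsymbol{\gamma}||\mu_1,\mu_2)\le\liminf_n\mathcal{ET}(\boldsymbol{\gamma}_n||\mu_1,\mu_2)=\et(\mu_1,\mu_2)$, so $\boldsymbol{\gamma}$ is optimal. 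The same computation shows that $\{\boldsymbol{\gamma}:\mathcal{ET}(\boldsymbol{\gamma}||\mu_1,\mu_2)\le\et(\mu_1,\mu_2)+1\}$ is bounded, equally tight (hence weakly relatively compact) and weakly closed.

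\emph{The minimizer set is compact and convex.} The functional $\mathcal{ET}(\cdot||\mu_1,\mu_2)$ is convex: each $\boldsymbol{\gamma}\mapsto D_F(\gamma_i||\mu_i)$ is the composition of the jointly convex $D_F$ with the affine map $\boldsymbol{\gamma}\mapsto(\gamma_i,\mu_i)$, and $\boldsymbol{\gamma}\mapsto\int\boldsymbol{\mathrm c}\,\dd\boldsymbol{\gamma}$ is linear, so the argmin is convex. That set is weakly closed (a sublevel set of a weakly l.s.c. functional) and contained in the weakly relatively compact set of the previous paragraph, hence weakly compact.

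\emph{Convexity, positive $1$-homogeneity and subadditivity of $\et$.} For $\lambda>0$, $\boldsymbol{\gamma}\mapsto\lambda\boldsymbol{\gamma}$ is a bijection of $\Mi(X\times X)$ with $i$-th marginal $\lambda\gamma_i$, and \eqref{def: F divergenza} gives $D_F(\lambda\gamma||\lambda\mu)=\lambda D_F(\gamma||\mu)$ (unchanged Lebesgue density, singular part scaled by $\lambda$), while $\int\boldsymbol{\mathrm c}\,\dd(\lambda\boldsymbol{\gamma})=\lambda\int\boldsymbol{\mathrm c}\,\dd\boldsymbol{\gamma}$; thus $\mathcal{ET}(\lambda\boldsymbol{\gamma}||\lambda\mu_1,\lambda\mu_2)=\lambda\,\mathcal{ET}(\boldsymbol{\gamma}||\mu_1,\mu_2)$ and taking infima yields $\et(\lambda\mu_1,\lambda\mu_2)=\lambda\,\et(\mu_1,\mu_2)$. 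For convexity, given $(\mu_1^0,\mu_2^0),(\mu_1^1,\mu_2^1)$ and $t\in[0,1]$: if either problem is infeasible the inequality is trivial, otherwise pick optimal plans $\boldsymbol{\gamma}^0,\boldsymbol{\gamma}^1$ and test $\mathcal{ET}(\cdot\,||(1-t)\mu_1^0+t\mu_1^1,(1-t)\mu_2^0+t\mu_2^1)$ with $(1-t)\boldsymbol{\gamma}^0+t\boldsymbol{\gamma}^1$, whose $i$-th marginal is $(1-t)\gamma_i^0+t\gamma_i^1$; joint convexity of $D_F$ bounds each entropy term by $(1-t)D_F(\gamma_i^0||\mu_i^0)+tD_F(\gamma_i^1||\mu_i^1)$ and the cost term is linear, whence $\et\big((1-t)\mu_1^0+t\mu_1^1,(1-t)\mu_2^0+t\mu_2^1\big)\le(1-t)\et(\mu_1^0,\mu_2^0)+t\et(\mu_1^1,\mu_2^1)$. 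Subadditivity follows from convexity at $t=\tfrac12$ together with positive $1$-homogeneity: $\et(\mu_1^0+\mu_1^1,\mu_2^0+\mu_2^1)=2\,\et\big(\tfrac{\mu_1^0+\mu_1^1}{2},\tfrac{\mu_2^0+\mu_2^1}{2}\big)\le\et(\mu_1^0,\mu_2^0)+\et(\mu_1^1,\mu_2^1)$.

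\emph{Main obstacle.} All of this is soft except for the compactness step: the rest is lower semicontinuity of the three contributions plus the elementary scaling and convexity of $D_F$. The genuine difficulty is extracting a weakly convergent minimizing sequence, i.e. upgrading the uniform energy bound $D_F(\gamma_{i,n}||\mu_i)\le C$ to \emph{equal tightness} of the marginals $\gamma_{i,n}$; this is exactly the content of Lemma~\ref{lem: compattezza sottolivelli entropia}, and it is the one place where superlinearity of $F$ is indispensable — for a merely linearly growing entropy the argument breaks down and one has to pass to the cone formulation or impose an a~priori mass constraint to recover compactness.
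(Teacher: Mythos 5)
Your proof is correct: the direct method (energy bound $\Rightarrow$ marginal tightness via Lemma \ref{lem: compattezza sottolivelli entropia} and superlinearity $\Rightarrow$ tightness of the plans, Prokhorov, then joint lower semicontinuity of $D_F$ and of the cost integral), together with the scaling identity $D_F(\lambda\gamma||\lambda\mu)=\lambda D_F(\gamma||\mu)$ and joint convexity of $D_F$ for the homogeneity, convexity and subadditivity claims, is exactly the standard argument. The paper does not prove this proposition itself but refers to \cite{LMS}, where the existence and structure of minimizers are obtained by essentially the same compactness/lower-semicontinuity reasoning, so your write-up is a faithful, self-contained version of that approach.
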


\begin{remark}\label{rem: conical formulation}
An important role in the theory of Entropy-Transport problems is played by the \emph{marginal perspective cost} $H$, that we are going to define.

Given a number $c\in[0,+\infty)$ and an admissible entropy function $F$, we first introduce the marginal perspective function $H_c:[0,+\infty)\times[0,+\infty)\rightarrow [0,+\infty]$ as the lower semicontinuous envelope of the function
\begin{equation*}
\label{espressione H tilde}
\tilde{H}_c(r_1,r_2):=\inf_{\theta>0}R\left(\frac{r_1}{\theta}\right)\theta+R\left(\frac{r_1}{\theta}\right)\theta+\theta c,
\end{equation*}
where $R$ is the reverse entropy defined in \eqref{def: reverse entropy}.
If $c=+\infty$, we set 
\begin{equation*}
H_{\infty}(r_1,r_2)=F(0)r_1+F(0)r_2.
\end{equation*}
When $\boldsymbol{\mathrm c}:X_1\times X_2\rightarrow [0,+\infty]$ is a lower semicontinuous cost function on two metric spaces $X_1,X_2$, the induced marginal perspective cost
$$H:X_1\times [0,+\infty)\times X_2\times[0,+\infty)\rightarrow [0,+\infty]$$
is defined as
\begin{equation}\label{eq:defHHc}
H(x_1,r;x_2,t):=H_{\boldsymbol{\mathrm c}(x_1,x_2)}(r,t).
\end{equation}

One can give some equivalent formulations of the problem \eqref{def: problema di trasporto entropico} in terms of the marginal perspective cost (see for instance \cite[Theorem 5.8]{LMS}). Moreover, the metric properties of the entropy-transport cost $\et$ defined in \eqref{def: problema di trasporto entropico} can be read in terms of the properties of $H$, studied as a function on the space $\mathfrak{C}(X)\times \mathfrak{C}(X)$. This point of view, which links the Entropy-Transport structure with the conical geometry of the problem, has been deeply investigated by Liero, Mielke and Savaré for the Hellinger-Kantorovich distance \cite[Section 7]{LMS}  (see also \cite{Chizat, DepIeee} and \cite[Chapters 3,4]{DepPhd} for general marginal perspective functions). 

For brevity, we do not enter into the details of all these formulations (but see Section \ref{sec: CGW} for some details on the conical construction performed in \cite{SVP}). Here we only remark that for any complete and separable metric space $(X,\di)$ the cost $\et^a$ induces a distance on the space of measures $\mathscr{M}(X)$ if and only if $H^a$ is a distance on the cone $\mathfrak{C}(X)$, $a\in (0,1]$. In general, it is not difficult to identify conditions on $F$ and $\boldsymbol{\mathrm c}$ for which the induced function $H$ is nonnegative, symmetric and $H(x_1,r;x_2,t)=0$ if and only if $(x_1,r)=(x_2,t)$ as points on the cone (see \cite[Proposition 4]{DepIeee}); on the contrary, proving the triangle inequality for (a power of) $H$ is a much more challenging problem.
\end{remark}

\subsection{Regular Entropy-Transport distances}
In the next definition we introduced the class of \emph{regular Entropy-Transport distances}.

\begin{definition}\label{def: Entropy-Transport distance}
We say that $\Det$ is a regular Entropy-Transport distance if 
\begin{itemize}
\item There exist $a\in (0,1]$, $F\in \Gamma_0(\R_+)$ and  a function $\ell:[0,\infty)\rightarrow [0,\infty]$ such that for every complete and separable metric space $(X,\di)$, setting $\boldsymbol{\mathrm c}(x_1,x_2):=\ell(\di(x_1,x_2))$, the function $\Det$ coincides with the power $a$ of the Entropy-Transport cost $\et$ induced by $(F,\boldsymbol{\mathrm c})$, namely
\begin{equation}
\Det(\mu_1,\mu_2)=\et^a(\mu_1,\mu_2) \quad \textrm{for every} \ \mu_1,\mu_2\in \Mi(X).
\end{equation}
\item The function $\ell$ is continuous, convex and $\ell(s)=0$ if and only if $s=0$. 
\item $F$ is superlinear and finite valued. 
\item  For every complete and separable metric space $(X,\di)$, the related Entropy-Transport distance $\Det$ is a complete and separable metric on $\Mi(X)$ inducing the weak topology.
\end{itemize}
We also write that the distance $\Det$ is induced by $(a,F,\ell)$ with obvious meaning.
\end{definition}

We notice that if $\Det$ is a regular Entropy-Transport distance induced by $(a,F,\ell)$ then $\ell$ is an increasing function and $\lim_{d\to +\infty}\ell(d)=+\infty$.

We conclude the section with a list of examples of regular Entropy-Transport distances.

\begin{examples*}
\begin{enumerate}
\item\label{def:HK} \textbf{Hellinger-Kantorovich:} Let $F(s)=U_1(s):=s\log{s}-s+1$ and $$\ell_{\hk}(d):=\begin{cases} -\log\left({\cos^2(d)}\right) \ \ &\textrm{if} \ d<\frac{\pi}{2}, \\
+\infty \ \ &\textrm{otherwise}.\end{cases}$$
\\It is proved in \cite[Section 7]{LMS} that $(1/2,U_1,\ell_{\hk})$ induces a regular Entropy-Transport distance, called \emph{Hellinger-Kantorovich} distance. We refer also to \cite{LMS1} for a discussion on ``weighted versions'' of the Hellinger-Kantorovich distance.

\item\label{def:GHK} \textbf{Gaussian Hellinger-Kantorovich:} Let $F(s)=U_1(s)=s\log{s}-s+1$ and $\ell_2(d):=d^2$. 
\\ The triple $(1/2,U_1,\ell_2)$ induces a regular Entropy-Transport distance, as discussed in \cite[Section 7.8]{LMS}. It is called \emph{Gaussian Hellinger-Kantorovich} distance.

\item\label{def:power-like} \textbf{Quadratic power-like distances:} Let 
$$F(s)=U_p(s):=\frac{s^p-p(s-1)-1}{p(p-1)}, \qquad p>1$$ 
and $\ell_2(d)=d^2$.
\\Then, for every $1<p\le 3$ the triple $(1/2,U_p,\ell_2)$ induces a regular Entropy-Transport distance, as proved in \cite[Theorem 6 and Corollary 1]{DepIeee}.
\\ We notice that the class of entropy functions $\{U_p\}$ satisfies
$\lim_{p\to 1} U_p(s)=U_1(s)$, justifying the notation we have used (see also \cite[Example 2.5]{LMS}).

\item\label{def:linear power-like} \textbf{Linear power-like distances:}
Let $$F(s)=U_p(s):=\frac{s^p-p(s-1)-1}{p(p-1)}, \qquad p>1$$
and $\ell_1(d):=d$. 
\\ For every $p>1$, $(1/2,U_p,\ell_1)$ induces a regular Entropy-Transport distance (see again \cite[Theorem 6 and Corollary 1]{DepIeee}). 

\end{enumerate}
\end{examples*}

\section{Sturm-Entropy-Transport distance}

We say that two metric measure spaces $(X_1,\mathsf{d}_1,\mu_1)$ and $(X_2,\mathsf{d}_2,\mu_2)$ are \emph{isomorphic} if there exists an isometry $\psi:\mathsf{supp}(\mu_1)\rightarrow \mathsf{supp}(\mu_2)$ such that $\psi_{\sharp}\,\mu_1=\mu_2$, where $\psi_{\sharp}$ denotes the push-forward through the map $\psi$.
A necessary condition in order to be isomorphic is that $\mu_1(X_1)=\mu_2(X_2).$
\\The family of all isomorphism classes of metric measure spaces will be denoted by $\boldsymbol{\mathrm{X}}$.
From now on, we will identify a metric measure space with its class.
\\
We recall now the definition of the $\mathbf{D}_p$-distance due to Sturm.
\begin{definition}[\cite{St1}]\label{def: D}
Fix $p\ge 1$. Let $(X_1,\mathsf{d}_1,\mu_1)$ and $(X_2,\mathsf{d}_2,\mu_2)$ be two metric measure spaces, the Sturm $\mathbf{D}_p$-distance is defined as 
\begin{equation}
\mathbf{D}_p\big((X_1,\mathsf{d}_1,\mu_1),(X_2,\mathsf{d}_2,\mu_2)\big):=\inf \mathcal{W}_p(\psi^1_{\sharp}\mu_1,\psi^2_{\sharp}\mu_2),
\end{equation}
where the infimum is taken over all complete and separable metric spaces $(\hat{X},\hat{\mathsf{d}})$ with isometric embeddings $\psi^1:\mathsf{supp}(\mu_1)\rightarrow \hat{X}$ and $\psi^2:\mathsf{supp}(\mu_2)\rightarrow \hat{X}$.
\end{definition}
It is proved in \cite[Theorem 3.6]{St1} that $\mathbf{D}_p$ is a complete, separable and geodesic metric on the set
$$\boldsymbol{\mathrm{X}}_{1,p}:=\{ (X,\mathsf{d},\mu)\in \boldsymbol{\mathrm{X}}: \ \mu\in \mathscr{P}_p(X,\mathsf{d})\}.$$

We are now going to define the \emph{Sturm-Entropy-Transport distance} in a similar way.

\begin{definition}\label{def: D-ET}
Let $(X_1,\mathsf{d}_1,\mu_1)$ and $(X_2,\mathsf{d}_2,\mu_2)$ be two metric measure spaces, we define the Sturm-Entropy-Transport distance induced by the regular Entropy-Transport distance $\Det$ as
\begin{equation}
\DET\big((X_1,\mathsf{d}_1,\mu_1),(X_2,\mathsf{d}_2,\mu_2)\big):=\inf \Det (\psi^1_{\sharp}\mu_1,\psi^2_{\sharp}\mu_2),
\end{equation}
where the infimum is taken over all complete and separable metric spaces $(\hat{X},\hat{\mathsf{d}})$ with isometric embeddings $\psi^1:\mathsf{supp}(\mu_1)\rightarrow \hat{X}$ and $\psi^2:\mathsf{supp}(\mu_2)\rightarrow \hat{X}$.
\end{definition}
It is not difficult to prove that the definition is well-posed. Indeed, let us suppose $(X'_i,\mathsf{d}'_i,\mu'_i)$ is isomorphic to $(X_i,\mathsf{d}_i,\mu_i)$ through the map $\varphi^i$, $i=1,2$. Then, for every metric space $\hat{X}$ and every isometric embedding $\psi^i:\mathsf{supp}(\mu_i)\rightarrow \hat{X}$, $i=1,2$, we have that 
$$\Det \left((\psi^1\circ\varphi^1)_{\sharp}\mu_1,(\psi^2\circ\varphi^2)_{\sharp}\mu_2\right)=\Det (\psi^1_{\sharp}\mu_1,\psi^2_{\sharp}\mu_2).$$

It is often convenient to work with explicit realisations of the  ambient space  $(\hat{X},\hat{\mathsf{d}})$, a particularly useful one is given by the disjoint union that we now discuss. 
\\Given two metric spaces $(X_1,\mathsf{d}_1,\mu_1)$ and $(X_2,\mathsf{d}_2,\mu_2)$,
let $X_1\sqcup X_2$ be their disjoint union. We say that a (resp. pseudo-)metric $\hat{\mathsf{d}}$ on $X_1\sqcup X_2$ is a \emph{(resp. pseudo-)metric coupling} between $\mathsf{d}_1$ and $\mathsf{d}_2$ if $\hat{\mathsf{d}}(x,y)=\mathsf{d_1}(x,y)$ when $x,y\in X_1$ and $\hat{\mathsf{d}}(x,y)=\mathsf{d_2}(x,y)$ when $x,y\in X_2$.
\\
A finite valued metric coupling $\hat{\mathsf{d}}$ between $\mathsf{d}_1$ and $\mathsf{d}_2$ always exists: to construct it, fix two points $\bar{x}_1\in X_1, \bar{x}_2\in X_2$, a number $c\in \mathbb{R}_+$, and define $\hat{\mathsf{d}}$ as:
\begin{equation}
\hat{\mathsf{d}}(x,y):=\begin{cases} \mathsf{d}_1(x,y) \ \ &\textrm{if} \ x,y\in X_1\\
\mathsf{d}_2(x,y) &\textrm{if} \ x,y\in X_2\\
\mathsf{d}_1(x,\bar{x}_1)+c+\mathsf{d}_2(\bar{x}_2,y) \ \  &\textrm{if} \ x\in X_1, y\in X_2\\
\mathsf{d}_1(y,\bar{x}_1)+c+\mathsf{d}_2(\bar{x}_2,x) \ \  &\textrm{if} \ y\in X_1, x\in X_2.\end{cases}
\end{equation}
Moreover, from any finite valued pseudo-metric coupling $\hat{\mathsf{d}}$ of $\mathsf{d}_1$ and $\mathsf{d}_2$ and any $\delta>0$ we can obtain a complete, separable metric $\hat{\mathsf{d}}_{\delta}$ which is again a coupling of $\mathsf{d}_1$ and $\mathsf{d}_2$ in the following way:
\begin{equation}\label{def: metrica coupling}
\hat{\mathsf{d}}_{\delta}:=\begin{cases} \hat{\mathsf{d}} \ \ &\textrm{on} \ (X_1\times X_1)\sqcup (X_2\times X_2)\\
\hat{\mathsf{d}}+\delta &\textrm{on} \ (X_1\times X_2)\sqcup (X_2\times X_1).\end{cases}
\end{equation}
We say that a measure $\boldsymbol{\gamma}\in \mathscr{M}(X_1\times X_2)$ is a \emph{measure coupling} between $\mu_1$ and $\mu_2$ if 
\begin{equation}
\boldsymbol{\gamma}(A\times X_2)=\mu_1(A) \ \ \textrm{and} \ \ \ \boldsymbol{\gamma}(X_1\times B)=\mu_2(B), 
\end{equation}
for all Borel sets $A\subset X_1$ and $B\subset X_2$.
We keep the notation $\gamma_i$ for the marginals of the measure $\boldsymbol{\gamma}\in \mathscr{M}(X_1\times X_2)$, $i=1,2.$

A more explicit formulation of the function $\DET$ is given in the following Proposition. 
\begin{proposition}\label{pr: formulazione D-LET}
Let $(X_1,\mathsf{d}_1,\mu_1)$ and $(X_2,\mathsf{d}_2,\mu_2)$ be two metric measure spaces and $\Det$ a regular Entropy-Transport distance induced by $(a,F,\ell)$. 
\begin{enumerate}
\item[{\rm(i)}] In Definition \ref{def: D-ET} we can suppose without loss of generality that $\hat{X}=X_1\sqcup X_2$, $\psi^1=\iota_1$, $\psi^2=\iota_2$ be respectively the inclusion of $X_1$ and $X_2$ in $X_1\sqcup X_2$ and the infimum is taken over all the pseudo-metric couplings $\hat{\mathsf{d}}$ between $\mathsf{d}_1$ and $\mathsf{d}_2$.
\item[{\rm(ii)}]  In the situation of {\rm(i)}  we will identify $\mu_k$ with $(\iota_k)_\sharp \,\mu_k$, $k=1,2$, and it holds 
\begin{equation}\label{eq:DETainfC}
\DET^{1/a}((X_1,\mathsf{d}_1,\mu_1),(X_2,\mathsf{d}_2,\mu_2))=\inf_C\left\{\sum_{i=1}^2D_F(\gamma_i||\mu_i)+\int_{X_1\times X_2}\ell\big(\hat{\mathsf{d}}(x,y)\big)\dd\boldsymbol{\gamma}\right\},
\end{equation}
where 
\begin{multline}
C:=\{(\boldsymbol{\gamma},\hat{\di}): \boldsymbol{\gamma}\in \mathscr{M}(X_1\times X_2), \ \hat{\di} \ \textrm{finite valued pseudo-metric coupling for} \ \di_1,\di_2\}
\end{multline}

\end{enumerate}
\end{proposition}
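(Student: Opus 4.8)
The plan is to rewrite $\DET$ purely in terms of couplings on the fixed product $X_1\times X_2$ and pseudo-metrics on the fixed disjoint union $X_1\sqcup X_2$, and then to read off both (i) and (ii) from a single identity. Since $\DET$ depends only on the isomorphism classes of the two spaces (as noted right after Definition~\ref{def: D-ET}), and since — as $F$ is superlinear — any $\boldsymbol{\gamma}\in\mathscr{M}(X_1\times X_2)$ charging mass outside $\mathsf{supp}(\mu_1)\times\mathsf{supp}(\mu_2)$ has $D_F(\gamma_i||\mu_i)=+\infty$ for some $i$, neither side of \eqref{eq:DETainfC} is affected if we replace $X_i$ by $\mathsf{supp}(\mu_i)$; so I would first assume $X_i=\mathsf{supp}(\mu_i)$. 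For every finite-valued pseudo-metric coupling $\hat{\mathsf{d}}$ between $\mathsf{d}_1$ and $\mathsf{d}_2$ on $X_1\sqcup X_2$ set
\begin{equation*}
\et_{\hat{\mathsf{d}}}(\mu_1,\mu_2):=\inf\Big\{\sum_{i=1}^{2}D_F(\gamma_i||\mu_i)+\int_{X_1\times X_2}\ell\big(\hat{\mathsf{d}}(x,y)\big)\,\dd\boldsymbol{\gamma}\ :\ \boldsymbol{\gamma}\in\mathscr{M}(X_1\times X_2)\Big\},
\end{equation*}
with $\mu_i$ regarded as a measure on $X_i\times\{i\}\subseteq X_1\sqcup X_2$ and $\gamma_i=(\pi^i)_\sharp\boldsymbol{\gamma}$. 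The whole statement reduces to the identity
\begin{equation*}
\DET\big((X_1,\mathsf{d}_1,\mu_1),(X_2,\mathsf{d}_2,\mu_2)\big)=\inf_{\hat{\mathsf{d}}}\big(\et_{\hat{\mathsf{d}}}(\mu_1,\mu_2)\big)^{a},
\end{equation*}
the infimum running over all finite-valued pseudo-metric couplings (which is easily seen to coincide with the infimum over all pseudo-metric couplings, infinite values never lowering it): once it is proved, (i) is the assertion that each such $\hat{\mathsf{d}}$ is realised by an honest ambient space — the quotient of Lemma~\ref{lem: quotient disjoint union is metric} — and (ii) follows because $s\mapsto s^{1/a}$ is a strictly increasing continuous bijection of $[0,+\infty]$ (here $a\in(0,1]$), hence commutes with infima, turning the displayed identity together with the definition of $\et_{\hat{\mathsf{d}}}$ into \eqref{eq:DETainfC}.

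The key technical point is the following \emph{intrinsicness} statement: if $(Z,\mathsf{d}_Z)$ is a complete separable metric space and $\psi^i:X_i\to Z$, $i=1,2$, are isometric embeddings with $\mathsf{d}_Z(\psi^1(x),\psi^2(y))=\hat{\mathsf{d}}(x,y)$ for all $x\in X_1$, $y\in X_2$, then $\Det(\psi^1_\sharp\mu_1,\psi^2_\sharp\mu_2)=\big(\et_{\hat{\mathsf{d}}}(\mu_1,\mu_2)\big)^{a}$. To prove it I would first invoke the definition of a regular Entropy-Transport distance to write $\Det(\psi^1_\sharp\mu_1,\psi^2_\sharp\mu_2)=\et^a(\psi^1_\sharp\mu_1,\psi^2_\sharp\mu_2)$, the Entropy-Transport cost being computed in $Z$ with cost $\ell\circ\mathsf{d}_Z$. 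Since $F$ is superlinear, every coupling $\boldsymbol{\gamma}\in\mathscr{M}(Z\times Z)$ with finite Entropy-Transport functional has marginals absolutely continuous with respect to $\psi^i_\sharp\mu_i$, hence is concentrated on $\psi^1(X_1)\times\psi^2(X_2)$; and since each $\psi^i$ is an isometry onto its (closed, hence Borel) image, one can transport such couplings between $\mathscr{M}(Z\times Z)$ and $\mathscr{M}(X_1\times X_2)$ by push-forward along $(\psi^1,\psi^2)$ and back. Lemma~\ref{lem: invariance divergence under injection} guarantees that the divergence terms are preserved under these transports, while the hypothesis $\mathsf{d}_Z(\psi^1(x),\psi^2(y))=\hat{\mathsf{d}}(x,y)$ guarantees that the cost terms are preserved; this yields a value-preserving bijection between the competitors of the two Entropy-Transport problems, and hence the equality of their infima.

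Granting the intrinsicness statement, both inequalities of the displayed identity are immediate. For ``$\leq$'': given a finite-valued pseudo-metric coupling $\hat{\mathsf{d}}$, Lemma~\ref{lem: quotient disjoint union is metric} produces a complete separable metric space $(\tilde X,\tilde{\mathsf{d}})$ with quotient map $p$; the maps $\psi^i:=p\circ\iota_i$ are injective (because $\hat{\mathsf{d}}$ restricts to $\mathsf{d}_i$ on $X_i$) and isometric, satisfy $\tilde{\mathsf{d}}(\psi^1(x),\psi^2(y))=\hat{\mathsf{d}}(x,y)$, and are valid competitors in Definition~\ref{def: D-ET}, so the intrinsicness statement gives $\DET\leq\Det(\psi^1_\sharp\mu_1,\psi^2_\sharp\mu_2)=\big(\et_{\hat{\mathsf{d}}}(\mu_1,\mu_2)\big)^{a}$, and one takes the infimum over $\hat{\mathsf{d}}$. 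For ``$\geq$'': given a complete separable metric space $(\hat X,\hat{\mathsf{d}}_{\hat X})$ with isometric embeddings $\psi^i:X_i\to\hat X$, let $\hat{\mathsf{d}}$ be the pull-back of $\hat{\mathsf{d}}_{\hat X}$ under the map $X_1\sqcup X_2\ni(x,i)\mapsto\psi^i(x)$; this is a finite-valued pseudo-metric coupling (only a \emph{pseudo}-metric, since the images $\psi^1(X_1),\psi^2(X_2)$ may overlap) with $\hat{\mathsf{d}}(x,y)=\hat{\mathsf{d}}_{\hat X}(\psi^1(x),\psi^2(y))$ on $X_1\times X_2$, and the intrinsicness statement applied with $Z=\hat X$ gives $\big(\et_{\hat{\mathsf{d}}}(\mu_1,\mu_2)\big)^{a}=\Det(\psi^1_\sharp\mu_1,\psi^2_\sharp\mu_2)$, whence $\inf_{\hat{\mathsf{d}}}\big(\et_{\hat{\mathsf{d}}}(\mu_1,\mu_2)\big)^{a}\leq\DET$ after taking the infimum over $(\hat X,\psi^1,\psi^2)$.

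The main obstacle is not any individual estimate but the need to reconcile two a priori different objects: the Entropy-Transport cost is attached intrinsically to a metric space (and it is only through such a space that $\Det$, hence $\DET$, is defined), whereas the right-hand side of \eqref{eq:DETainfC} manipulates couplings on the fixed product $X_1\times X_2$ together with an abstract pseudo-distance on $X_1\sqcup X_2$. Bridging the two requires combining, inside the intrinsicness statement, (a) Lemma~\ref{lem: quotient disjoint union is metric}, which turns a pseudo-metric coupling into a genuine complete separable metric space; (b) the superlinearity of $F$ (part of Definition~\ref{def: Entropy-Transport distance}), which confines all finite-cost couplings to the supports of the marginals and thereby makes the Entropy-Transport cost insensitive to the ambient space away from those supports; and (c) Lemma~\ref{lem: invariance divergence under injection}, which carries the $F$-divergences across the embeddings. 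Once these three ingredients are assembled, everything else is formal.
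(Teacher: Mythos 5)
Your argument is correct, and it reaches \eqref{eq:DETainfC} by a route that is organized differently from the paper's even though it draws on the same toolbox (superlinearity of $F$, Lemma \ref{lem: invariance divergence under injection}, Lemma \ref{lem: quotient disjoint union is metric}). You package everything into one ``intrinsicness'' lemma and then get the ``$\leq$'' inequality by turning a finite-valued pseudo-metric coupling into a genuine complete separable metric space via the quotient of Lemma \ref{lem: quotient disjoint union is metric}; the paper instead keeps the pseudo-metric and passes to the perturbed metric $\hat{\mathsf{d}}_\delta$ of \eqref{def: metrica coupling}, paying an $\varepsilon$ through the continuity of $\ell$ as in \eqref{eq:ddeltadeps} (the quotient construction only enters later, in the proof of Lemma \ref{lem:esistenza ottimo}(ii)). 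Your variant is exact (no $\varepsilon$) and does not need any continuity or finiteness of $\ell$, which is a genuine simplification when $\ell$ attains the value $+\infty$, as for the Hellinger--Kantorovich cost, where the additive perturbation $\hat{\mathsf{d}}+\delta$ requires extra care. Conversely, for the ``$\geq$'' inequality the paper does not reduce to $X_i=\mathsf{supp}(\mu_i)$, so it must build the pseudo-metric coupling $\tilde{\mathsf{d}}$ on all of $X_1\sqcup X_2$ by infimal chains through the supports and transfer the plan by the maps $\Psi^i$; your pull-back of $\hat{\mathsf{d}}_{\hat X}$ is cleaner precisely because you reduced to the supports first.

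One small point you should make explicit: the opening reduction to $X_i=\mathsf{supp}(\mu_i)$ is fully justified for the left-hand side (Definition \ref{def: D-ET} only sees the supports) and, on the right-hand side, superlinearity handles the measures; but the set $C$ in \eqref{eq:DETainfC} consists of pseudo-metric couplings of the \emph{full} distances $\di_1,\di_2$, so to pass from the support-reduced problem back to the stated one you also need that any finite-valued pseudo-metric coupling of $\di_1|_{\mathsf{supp}(\mu_1)},\di_2|_{\mathsf{supp}(\mu_2)}$ extends to one of $\di_1,\di_2$ without increasing the cost. This is routine --- the infimal-chain formula used for $\tilde{\mathsf{d}}$ in the paper's proof does exactly this (with the trivial adjustment when one measure is null) --- but as written your reduction only argues the measure side.
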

\begin{proof}
${\rm(i)}$ We first show that the infimum as in  ${\rm(i)}$ is less or equal to the infimum as in Definition \ref{def: D-ET}. Let $(\hat{X},\hat{\mathsf{d}})$ be a complete and separable metric space with isometric embeddings $\psi^1:\mathsf{supp}(\mu_1)\rightarrow \hat{X}$, $\psi^2:\mathsf{supp}(\mu_2)\rightarrow \hat{X}$, and let  $\hat{\boldsymbol{\gamma}}\in \mathscr{M}(\hat{X}\times \hat{X})$.
It is immediate to check that    
\begin{equation}
\tilde{{\mathsf{d}}}(x_{1},x_{2}):=
\begin{cases} 
\di_1(x_1,x_2) &  \text{ if } (x_{1},x_{2})\in X_1\times X_1\\ 
\di_2(x_1,x_2) &  \text{ if } (x_{1},x_{2})\in X_2\times X_2\\
\displaystyle \inf_{\substack{y_1\in \mathsf{supp}(\mu_1)\\ y_2 \in \mathsf{supp}(\mu_2)}} \di_1(x_1,y_1)+\hat{\di}(\psi^1(y_1),\psi^2(y_2))+\di_2(y_2,x_2)&  \text{ if } (x_{1},x_{2})\in X_1\times X_2\\
\displaystyle \inf_{\substack{y_1\in \mathsf{supp}(\mu_1)\\ y_2 \in \mathsf{supp}(\mu_2)}} \di_1(x_2,y_1)+\hat{\di}(\psi^1(y_1),\psi^2(y_2))+\di_2(y_2,x_1)&  \text{ if } (x_{1},x_{2})\in X_2\times X_1
\end{cases}
\end{equation}
defines a pseudo-metric on $X_1\sqcup X_2$, coupling between $\mathsf{d}_1$ and $\mathsf{d}_2$. \\ Moreover, setting the Borel injective functions $\Psi^i: \psi^1(\mathsf{supp}(\mu_1))\cup  \psi^2(\mathsf{supp}(\mu_2))\subset \hat{X} \to X_1\sqcup X_2$, $i=1,2$, defined as 
\begin{equation*}
\Psi^1(\hat{x}):=
\begin{cases}
\iota_{1} ((\psi^1)^{-1}(\hat{x}))& \text{ if } \hat{x}\in \psi^1(\mathsf{supp}(\mu_1))  \\
\iota_{2} ((\psi^2)^{-1}(\hat{x}))& \text{ if } \hat{x}\in \psi^2(\mathsf{supp}(\mu_2)),  \hat{x} \notin \psi^{1}(\mathsf{supp}(\mu_1)),
\end{cases}
\end{equation*}
\begin{equation*}
\Psi^2(\hat{x}):=
\begin{cases}
\iota_{1} ((\psi^1)^{-1}(\hat{x}))& \text{ if } \hat{x}\in \psi^1(\mathsf{supp}(\mu_2)),  \hat{x} \notin \psi^{2}(\mathsf{supp}(\mu_1))  \\
\iota_{2} ((\psi^2)^{-1}(\hat{x}))& \text{ if } \hat{x}\in \psi^2(\mathsf{supp}(\mu_2)),
\end{cases}
\end{equation*}
and using Lemma \ref{lem: invariance divergence under injection}  it is immediate to check that $\tilde{\boldsymbol{\gamma}}:= (\Psi^1,\Psi^2)_{\sharp} \hat{\boldsymbol{\gamma}}\in \mathscr{M}((X_1\sqcup X_2)\times (X_1\sqcup X_2))$ satisfies  
\begin{align}
&\sum_{i=1}^2D_F(\tilde{\gamma}_i||(\iota_{i})_{\sharp}\mu_i)+\int_{\iota_{1}(X_1)\times \iota_{2}(X_2)}\ell\big(\tilde{\mathsf{d}}(x,y)\big)\dd\boldsymbol{\tilde{\gamma}}(x,y) \nonumber\\
& \quad \le \sum_{i=1}^2D_F(\hat{\gamma}_i||(\psi^{i})_{\sharp}\mu_i)+\int_{\psi^{1}(\mathsf{supp}(\mu_1))\times \psi^{2}(\mathsf{supp}(\mu_2))}\ell\big(\hat{\mathsf{d}}(x,y)\big)\dd\boldsymbol{\hat{\gamma}}(x,y) \nonumber\\
& \quad \leq \sum_{i=1}^2D_F(\hat{\gamma}_i||(\psi^{i})_{\sharp}\mu_i)+\int_{\hat{X}\times \hat{X}}\ell\big(\hat{\mathsf{d}}(x,y)\big)\dd\boldsymbol{\hat{\gamma}}(x,y), \label{eq:tildedhatdgeq}
\end{align}
where we have used the fact that $\tilde{\mathsf{d}}(\Psi^1(x),\Psi^2(y))\le \hat{\mathsf{d}}(x,y)$ whenever $x\in \psi^1(\mathsf{supp}(\mu_1))$ and $y\in \psi^2(\mathsf{supp}(\mu_2)).$

This yields that the infimum as in  ${\rm(i)}$ is less or equal to the infimum as in Definition \ref{def: D-ET}.

To show that the infimum as in Definition \ref{def: D-ET} is less or equal to the infimum as in  ${\rm(i)}$, it is sufficient to notice that for every pseudo-metric coupling $\hat{\di}$, for every measure $\boldsymbol{\gamma}\in \mathscr{M}(X_1\times X_2)$ and for every $\epsilon>0$ there is $\delta>0$ such that  the complete and separable metric $\hat{\mathsf{d}}_{\delta}$ defined in \eqref{def: metrica coupling} is a coupling between $\di_1$ and $\di_2$ satisfying
\begin{equation}\label{eq:ddeltadeps}
\int_{X_1\times X_2}\ell\big(\hat{\mathsf{d}}_{\delta}(x,y)\big)\dd\boldsymbol{\gamma} \le \int_{X_1\times X_2}\ell\big(\hat{\mathsf{d}}(x,y)\big)\dd\boldsymbol{\gamma}+\epsilon,
\end{equation}
as a consequence of the finiteness of the measure $\boldsymbol{\gamma}$ and the continuity of $\ell$. 
\vspace{0.3cm}

${\rm(ii)}$ In case the infimum runs over the couples $(\boldsymbol{\gamma},\hat{\di})\in C$ such that $\hat{\di}$ is a complete and separable metric, the inequality ``$\leq$'' in \eqref{eq:DETainfC}  is a simple consequence of the explicit formulation of the Entropy-Transport problem together with the fact that the superlinearity of $F$ allows to consider measures $\boldsymbol{\gamma}\in\mathscr{M}((X_1\sqcup X_2)\times (X_1\sqcup X_2))$ with support contained in $X_1\times X_2$. The fact that ``$\leq$'' holds  in \eqref{eq:DETainfC} even if the infimum is taken over the larger set $C$ is a consequence of \eqref{eq:ddeltadeps}.

The proof of the inequality  ``$\geq$'' in \eqref{eq:DETainfC} is analogous to the first part of the proof of  ${\rm(i)}$, see in particular \eqref{eq:tildedhatdgeq}.
\end{proof}

In the next Lemma we collect some of the basic properties of the function $\DET.$ 
\begin{lemma}\label{lem: proprieta' di base}
Let $\Det$ be a regular Entropy-Transport distance induced by $(a,F,\ell)$. 
\begin{enumerate}[(i)]
\item\label{lem: riscalamento misura}
For any $M\ge 0$ it holds 
\begin{equation}
\DET((X_1,\mathsf{d}_1,M\mu_1),(X_2,\mathsf{d}_2,M\mu_2))=M^a\DET((X_1,\mathsf{d}_1,\mu_1),(X_2,\mathsf{d}_2,\mu_2)).
\end{equation}

\item\label{lem: dis metriche coincidenti}
If $(X_1,\mathsf{d}_1)=(X_2,\mathsf{d}_2)$ then
\begin{equation}
\DET((X_1,\mathsf{d}_1,\mu_1),(X_2,\mathsf{d}_2,\mu_2))\leq \Det(\mu_1,\mu_2).
\end{equation}

\item\label{lem: densita' delta}
The set 
\begin{equation}\label{eq: densita' delta}
\boldsymbol{\mathrm{X}}_{*}:=\Big\{(X,\mathsf{d},\mu)\in\boldsymbol{\mathrm{X}}, \ \mathsf{supp}(\mu)=\{x_1,...,x_n\}, \ n \in \mathbb{N}, \ \mu=M\sum_{i=1}^n \delta_{x_i}, \ M\in \mathbb{R}_{+}\Big\}
\end{equation}
is dense in $(\boldsymbol{\mathrm{X}},\DET).$

\item\label{lem: dis tra delta}
If 
\begin{equation}
\mu=M\sum_{i=1}^n \delta_{x_i} \ \textrm{and} \ \mu'=M\sum_{i=1}^n \delta_{x'_i},
\end{equation}
then
\begin{equation}
\DET^{1/a}((X,\mathsf{d},\mu),(X',\mathsf{d}',\mu'))\leq Mn\,\ell\Big(\sup_{i,j}|\mathsf{d}_{ij}-\mathsf{d}'_{ij}|\Big),
\end{equation}
where we put $\mathsf{d}_{ij}=\mathsf{d}(x_i,x_j)$ and $\mathsf{d}'_{ij}=\mathsf{d}(x'_i,x'_j)$.

\item\label{lem: dis riscalamento misura}
For any $N>1$ there exists a constant $C$ such that for every $M$, $1/N<M<N$, we have
\begin{equation}
\DET^{1/a}((X,\mathsf{d},\mu),(X,\mathsf{d},M\mu))\leq C\mu(X)|M-1|.
\end{equation}
\end{enumerate}
\end{lemma}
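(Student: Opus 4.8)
The plan is to reduce the assertion to an estimate on the intrinsic Entropy-Transport cost $\et(\mu,M\mu)$ on $X$ itself, and then to bound that cost by testing the Entropy-Transport functional against the diagonal coupling. First I would apply point \ref{lem: dis metriche coincidenti} of this Lemma with $(X_1,\di_1)=(X_2,\di_2)=(X,\di)$, $\mu_1=\mu$ and $\mu_2=M\mu$, obtaining $\DET((X,\di,\mu),(X,\di,M\mu))\le\Det(\mu,M\mu)=\et^a(\mu,M\mu)$; since $1/a\ge 1$ and $x\mapsto x^{1/a}$ is non-decreasing on $[0,+\infty)$, this gives $\DET^{1/a}((X,\di,\mu),(X,\di,M\mu))\le\et(\mu,M\mu)$.

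Next I would estimate $\et(\mu,M\mu)$ from above by choosing the competitor $\boldsymbol{\gamma}:=(\mathrm{id}_X,\mathrm{id}_X)_{\sharp}\mu\in\mathscr{M}(X\times X)$, i.e. the push-forward of $\mu$ by $x\mapsto(x,x)$. Both marginals of $\boldsymbol{\gamma}$ equal $\mu$, and its transport term vanishes because $\ell(0)=0$, namely $\int_{X\times X}\ell(\di(x_1,x_2))\dd\boldsymbol{\gamma}=\int_X\ell(0)\dd\mu=0$. Hence, by \eqref{def: funzionale di trasporto entropico},
\[
\et(\mu,M\mu)\le D_F(\mu\|\mu)+D_F(\mu\|M\mu).
\]
The first summand is $0$ since $F(1)=0$; for the second, $\mu$ has constant density $1/M$ with respect to $M\mu$ and no singular part, so $D_F(\mu\|M\mu)=\int_X F(1/M)\dd(M\mu)=M\,F(1/M)\,\mu(X)$, which is finite because $F$ is finite-valued.

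Finally I would control $M\,F(1/M)$ using the regularity of $F$: being a finite-valued convex function on $[0,+\infty)$, $F$ is locally Lipschitz on $(0,+\infty)$, so I let $C=C(N)$ be a Lipschitz constant for $F$ on the compact interval $[1/N,N]$ (this $C$ depends only on $F$ and $N$, as required). For $1/N<M<N$ we have $1/M\in[1/N,N]$, and using $F(1)=0$ we get $0\le F(1/M)=F(1/M)-F(1)\le C\,|1/M-1|=C\,|M-1|/M$, whence $M\,F(1/M)\le C\,|M-1|$. Combining the three steps yields
\[
\DET^{1/a}((X,\di,\mu),(X,\di,M\mu))\le\et(\mu,M\mu)\le C\,\mu(X)\,|M-1|,
\]
which is the claim. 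I do not expect a genuine obstacle here; the only point requiring care is that the Lipschitz bound for $F$ near $s=1$ must be taken on a scale controlled by $N$, and this is precisely where the hypothesis that $F$ be finite-valued (so that $1$ lies in the interior of $\operatorname{dom}F$) is used.
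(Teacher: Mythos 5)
Your argument addresses only point (v) of the Lemma: points (i)--(iv) are never discussed, so as a proof of the full statement it is incomplete (in the paper, (i) follows from the $1$-homogeneity of $\et$ and of the push-forward, (ii) from taking $(\hat X,\hat{\di})=(X_1,\di_1)$ with identity embeddings as a competitor, (iii) from (ii) together with the fact that $\Det$ metrizes weak convergence and that finite combinations of Diracs are weakly dense, and (iv) from an explicit pseudo-metric and measure coupling). For point (v) itself, your proof is correct and follows a genuinely different, and arguably more self-contained, route than the paper's. Both arguments begin identically, reducing via point (ii) to the estimate of $\et(\mu,M\mu)$ on $X$ itself. The paper then bounds $\et\le \et_\infty$, where $\et_\infty$ is the pure-entropy cost obtained by replacing $\ell\circ\di$ with the cost that vanishes on the diagonal and is $+\infty$ off it, invokes the explicit formula for $\et_\infty$ from \cite[Example E.5]{LMS} (a minimization over an auxiliary parameter $\theta\in[1,M]$), and uses the elementary convexity bound $F(s)\le C|s-1|$ on $(1/N,N)$ with an explicit difference-quotient constant. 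You instead test the Entropy-Transport functional directly with the diagonal plan $(\mathrm{id},\mathrm{id})_\sharp\mu$, whose transport term vanishes because $\ell(0)=0$, so that everything reduces to $D_F(\mu\,\|\,M\mu)=M\,F(1/M)\,\mu(X)$, and you then control $M\,F(1/M)$ by the local Lipschitz bound for the finite-valued convex $F$ near $s=1$ (correctly noting that $1/M\in[1/N,N]$). This avoids the external reference and the $\theta$-minimization altogether; in fact your diagonal plan corresponds to the choice $\theta=1$ in the paper's formula, and your Lipschitz constant plays the same role as the paper's explicit $C=\max\{F(1/N)/|1/N-1|,\,F(N)/(N-1)\}$, so the two estimates are essentially equivalent and yield the same final bound $C\,\mu(X)\,|M-1|$.
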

\begin{proof}
\begin{enumerate}[(i)]
\item This is a consequence of the $1$-homogeneity of the cost $\et$ (Proposition \ref{prop: proprieta ET}) and of the push-forward map together with the definitions of $\Det$ and $\DET$. 
\vspace{0.3cm}

\item The result follows from the definition of $\DET$, since $(\hat{X},\hat{\di})=(X_1,\di_1)$ with $\psi_1=\psi_2=\mathrm{Id}$ is an admissible competitor for the infimum.
\vspace{0.3cm}

\item The result follows by the point (\ref{lem: dis metriche coincidenti}) of the present Lemma, the fact that $\Det$ metrizes the weak convergence and the density in $\mathscr{M}(X)$ of the measures $\mu$ of the form $M\sum_{i=1}^n \delta_{x_i}$ with respect to weak convergence.
\vspace{0.3cm}

\item Let assume without loss of generality that $X=\{x_1,...,x_n\}$ and $X'=\{x'_1,...,x'_n\}$. We put $\delta=\sup_{i,j}|\mathsf{d}_{ij}-\mathsf{d}'_{ij}|$. We construct the following pseudo-metric coupling: on $X\times X$ we define $\hat{\di}=\di$, on $X'\times X'$ we put $\hat{\di}=\di'$, on $X\times X'$ we define
$$\hat{\di}(x_i,x'_j):=\inf_{k\in \{1,...,n\}}\di(x_i,x_k)+\di'(x'_k,x'_j)+\delta,$$
finally on $X'\times X$ we put
$$\hat{\di}(x'_i,x_j):=\inf_{k\in \{1,...,n\}}\di(x_j,x_k)+\di'(x'_k,x'_i)+\delta,$$
so that $\hat{\di}(x_i,x'_i)=\hat{\di}(x'_i,x_i)=\delta.$
\\We then define the measure coupling 
$$\boldsymbol{\gamma}=M\sum_{i=1}^n\delta_{(x_i,x'_i)}.$$
It is straightforward to see that $\hat{\di}$ and $\boldsymbol{\gamma}$ are actually couplings between $\di,\di'$ and $\mu,\mu'$, respectively. Then, using Proposition \ref{pr: formulazione D-LET} and recalling that $\ell$ is an increasing function we have that
$$\DET^{1/a}((X,\mathsf{d},\mu),(X',\mathsf{d}',\mu'))\leq \int_{X\times X'}\ell(\delta)\dd\boldsymbol{\gamma}=Mn\ell(\delta),$$
and the thesis follows.
\vspace{0.3cm}

\item We can take $\di$ itself as metric coupling. Then, by the point (\ref{lem: dis metriche coincidenti}) of the present Lemma, we have 
$$\DET^{1/a}((X,\mathsf{d},\mu),(X,\mathsf{d},M\mu))\le \Det^{1/a}(\mu,M\mu)=\et(\mu,M\mu).$$

By replacing the cost $\boldsymbol{\mathrm c}$ with the cost 
$$
\boldsymbol{\mathrm c}_{\infty}(x_1,x_2):=\begin{cases} 0 & \textrm{if } x_1=x_2\\
+\infty & \textrm{otherwise},
\end{cases}
$$
we obtain that 
$$\et(\mu,M\mu)\leq \et_{\infty}(\mu,M\mu),$$
where we have denoted by $\et_{\infty}$ the Entropy-Transport problem induced by the entropy function $F$ and the cost $\boldsymbol{\mathrm c}_{\infty}.$
Observe that every admissible entropy function satisfies
\begin{equation}\label{lem: bound F-variazione totale}
F(s)\leq C|s-1|, \quad \textrm{for every} \ \ 1/N<s<N,
\end{equation}
where 
$$C:=\max\left\{\frac{F(1/N)}{1/N-1},\frac{F(N)}{N-1}\right\}.$$

The conclusion now follows from an explicit computation of $\et_{\infty}$ together with the bound \eqref{lem: bound F-variazione totale}. Indeed, we have (see \cite[Example E.5]{LMS})
\begin{equation}
\et_{\infty}(\mu,M\mu)\le \min_{\theta\in [1,M]}\int_{X}C|\theta-1|+CM|\theta/M-1|\dd\mu=C\mu(X)|M-1|.
\end{equation}
\end{enumerate}
\end{proof}

The next Lemma shows the existence of the optimal couplings.  
\begin{lemma}\label{lem:esistenza ottimo}
Let $\Det$ be a regular Entropy-Transport distance induced by  $(a,F,\ell)$. Let $(X_1,\mathsf{d}_1,\mu_1)$ and $(X_2,\mathsf{d}_2,\mu_2)$ be two metric measure spaces. Then:
\begin{enumerate}[(i)]
\item There exist a measure $\boldsymbol{\gamma}\in\mathscr{M}(X_1\times X_2)$ and a pseudo-metric coupling $\hat{\mathsf{d}}$ between $\mathsf{d}_1$ and $\mathsf{d}_2$ such that
\begin{equation}
\DET^{1/a}((X_1,\mathsf{d}_1,\mu_1),(X_2,\mathsf{d}_2,\mu_2))=\sum_{i=1}^2D_F(\gamma_i||\mu_i)+\int_{X_1\times X_2}\ell\big(\hat{\mathsf{d}}(x,y)\big)\dd\boldsymbol{\gamma}.
\end{equation}
\item\label{esistenza metrico ottimo} There exist a complete and separable metric space $(\tilde{X},\tilde{\di})$ and isometric embeddings $\psi^1:\mathsf{supp}(\mu_1)\rightarrow \tilde{X}$, $\psi^2:\mathsf{supp}(\mu_2)\rightarrow \tilde{X}$ such that
\begin{equation}\label{eq: esistenza metrico ottimo}
\DET\big((X_1,\mathsf{d}_1,\mu_1),(X_2,\mathsf{d}_2,\mu_2)\big)= (\Det)_{\tilde{\di}}(\psi^1_{\sharp}\mu_1,\psi^2_{\sharp}\mu_2),
\end{equation}
where we have denoted by $(\Det)_{\tilde{\di}}$ the Entropy-Transport distance computed in the space $(\tilde{X},\tilde{\di})$.
\end{enumerate}
\end{lemma}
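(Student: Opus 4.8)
The plan is to prove part~(i) by the direct method of the calculus of variations, working with the reformulation of $\DET^{1/a}$ as an infimum over the set $C$ given in Proposition~\ref{pr: formulazione D-LET}(ii), and then to deduce part~(ii) almost for free from part~(i) together with Lemmas~\ref{lem: quotient disjoint union is metric} and~\ref{lem: invariance divergence under injection}. I may assume $\mu_1,\mu_2\neq 0$ (if, say, $\mu_1=0$, the superlinearity of $F$ forces any admissible $\boldsymbol{\gamma}$ to vanish and the optimal objects are immediate). Writing $I$ for the infimum in \eqref{eq:DETainfC}, the competitor $\boldsymbol{\gamma}=0$ together with any finite-valued pseudo-metric coupling (these exist, as recalled in the preliminaries) shows $I\le F(0)\big(\mu_1(X_1)+\mu_2(X_2)\big)<\infty$, since $F$ is finite-valued. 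I then fix a minimizing sequence $(\boldsymbol{\gamma}_n,\hat{\di}_n)\in C$.

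Compactness for the measures is routine: $\{\mu_i\}$ is a bounded, equally tight singleton (a finite measure on a Polish space is tight), so Lemma~\ref{lem: compattezza sottolivelli entropia} and the superlinearity of $F$ make the marginals $\{(\gamma_n)_i\}_n$ bounded and equally tight, hence relatively compact by Theorem~\ref{th: Prokhorov}; this propagates to $\{\boldsymbol{\gamma}_n\}_n$, so up to a subsequence $\boldsymbol{\gamma}_n\rightharpoonup\boldsymbol{\gamma}$ with $(\gamma_n)_i\rightharpoonup\gamma_i=\pi^i_\sharp\boldsymbol{\gamma}$, and superlinearity also forces $(\gamma_n)_i\ll\mu_i$, so all the $\boldsymbol{\gamma}_n$ and $\boldsymbol{\gamma}$ are concentrated on $\mathsf{supp}(\mu_1)\times\mathsf{supp}(\mu_2)$. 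The main obstacle will be to extract a limiting \emph{pseudo-metric} coupling, since the cross-distances $\hat{\di}_n$ are a priori uncontrolled off $\mathsf{supp}(\boldsymbol{\gamma}_n)$ and may diverge; I would resolve this by a dichotomy. If $\boldsymbol{\gamma}=0$, then $\boldsymbol{\gamma}_n(X_1\times X_2)\to 0$, the lower semicontinuity of $D_F$ (Lemma~\ref{lem: proprieta funzionale entropia}) gives $I\ge F(0)\big(\mu_1(X_1)+\mu_2(X_2)\big)$, hence equality with the bound above, and $(0,\hat{\di}_0)$ is optimal for any fixed finite pseudo-metric coupling $\hat{\di}_0$. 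If $\boldsymbol{\gamma}\neq 0$, with $m:=\boldsymbol{\gamma}(X_1\times X_2)>0$, then $\boldsymbol{\gamma}_n(X_1\times X_2)\ge m/2$ for $n$ large; since $\ell$ is increasing with $\ell(d)\to+\infty$, the uniform bound on $\int\ell(\hat{\di}_n)\,\dd\boldsymbol{\gamma}_n$ forces $\boldsymbol{\gamma}_n(\{\hat{\di}_n\ge R_0\})$ to be small for $R_0$ large, and combining with equal tightness of the marginals one finds, for $n$ large, a point of a fixed compact $K_1\times K_2\subset\mathsf{supp}(\mu_1)\times\mathsf{supp}(\mu_2)$ at which $\hat{\di}_n\le R_0$. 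Since each $\hat{\di}_n$ is $1$-Lipschitz on $\mathsf{supp}(\mu_1)\times\mathsf{supp}(\mu_2)$ (with respect to the sum of $\di_1$ and $\di_2$, by the triangle inequality), this single bounded value makes $\sup_n\hat{\di}_n$ bounded on every compact subset, and Arzel\`a--Ascoli together with a diagonal argument over the compact sets supplied by tightness yields a subsequence converging uniformly on such compacts to a finite-valued continuous function satisfying the mixed triangle inequalities, which extends via the formula of Proposition~\ref{pr: formulazione D-LET}(i) to a genuine finite-valued pseudo-metric coupling $\hat{\di}$.

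To finish part~(i) I would pass to the limit in the functional: the entropy terms are lower semicontinuous along $(\gamma_n)_i\rightharpoonup\gamma_i$ (Lemma~\ref{lem: proprieta funzionale entropia}), while the transport term $\boldsymbol{\gamma}\mapsto\int\ell(\hat{\di})\,\dd\boldsymbol{\gamma}$ is lower semicontinuous along the joint convergence $\boldsymbol{\gamma}_n\rightharpoonup\boldsymbol{\gamma}$ (which is tight) with $\hat{\di}_n\to\hat{\di}$ uniformly on compacts and $\ell$ continuous; hence $I\ge\sum_iD_F(\gamma_i||\mu_i)+\int_{X_1\times X_2}\ell(\hat{\di})\,\dd\boldsymbol{\gamma}\ge I$, so $(\boldsymbol{\gamma},\hat{\di})$ attains the infimum. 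For part~(ii), I take $(\boldsymbol{\gamma},\hat{\di})$ optimal as in~(i); since $\hat{\di}$ is a finite-valued pseudo-metric coupling, Lemma~\ref{lem: quotient disjoint union is metric} provides a complete and separable metric space $(\tilde{X},\tilde{\di})$ — the quotient of $(X_1\sqcup X_2,\hat{\di})$ by the relation $\{\hat{\di}=0\}$ — and composing $\iota_i$ with the quotient projection gives isometric embeddings $\psi^i:\mathsf{supp}(\mu_i)\to\tilde{X}$ (distance preservation is immediate, injectivity uses that $\di_i$ is a genuine metric). Thus $(\tilde{X},\tilde{\di},\psi^1,\psi^2)$ is an admissible competitor in Definition~\ref{def: D-ET}, which gives $\DET\big((X_1,\mathsf{d}_1,\mu_1),(X_2,\mathsf{d}_2,\mu_2)\big)\le(\Det)_{\tilde{\di}}(\psi^1_\sharp\mu_1,\psi^2_\sharp\mu_2)$; for the reverse inequality I would push $\boldsymbol{\gamma}$ forward to $\tilde{\boldsymbol{\gamma}}:=(\psi^1,\psi^2)_\sharp\boldsymbol{\gamma}\in\mathscr{M}(\tilde{X}\times\tilde{X})$, whose marginals are $\psi^i_\sharp\gamma_i$; by injectivity of $\psi^i$ and Lemma~\ref{lem: invariance divergence under injection} one has $D_F(\psi^i_\sharp\gamma_i||\psi^i_\sharp\mu_i)=D_F(\gamma_i||\mu_i)$, and $\tilde{\di}(\psi^1(x),\psi^2(y))=\hat{\di}(x,y)$ gives $\int\ell(\tilde{\di})\,\dd\tilde{\boldsymbol{\gamma}}=\int\ell(\hat{\di})\,\dd\boldsymbol{\gamma}$, so using $\tilde{\boldsymbol{\gamma}}$ as a competitor for the Entropy-Transport problem on $(\tilde{X},\tilde{\di})$ and invoking~(i), $(\Det)_{\tilde{\di}}^{1/a}(\psi^1_\sharp\mu_1,\psi^2_\sharp\mu_2)\le\sum_iD_F(\gamma_i||\mu_i)+\int\ell(\hat{\di})\,\dd\boldsymbol{\gamma}=\DET^{1/a}\big((X_1,\mathsf{d}_1,\mu_1),(X_2,\mathsf{d}_2,\mu_2)\big)$, which yields \eqref{eq: esistenza metrico ottimo}.
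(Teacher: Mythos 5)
Your proposal is correct and follows essentially the same route as the paper's proof: the direct method with Prokhorov plus the entropy-sublevel compactness of Lemma \ref{lem: compattezza sottolivelli entropia} for the plans, the equi-Lipschitz bound and Arzel\`a--Ascoli for the pseudo-metric couplings, joint lower semicontinuity to pass to the limit (your one-line assertion for the transport term is exactly what the paper proves in Step 3 by truncating $\ell$ and using tightness), and for (ii) the same quotient construction via Lemmas \ref{lem: quotient disjoint union is metric} and \ref{lem: invariance divergence under injection}. The only cosmetic differences are your Markov-inequality/tightness route to the pointwise bound on $\hat{\mathsf{d}}_n$ (the paper instead argues by contradiction at a point of $\mathsf{supp}(\boldsymbol{\gamma})$, after the same dichotomy on $\boldsymbol{\gamma}=0$) and your explicit treatment of the degenerate cases.
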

\begin{proof}
\begin{enumerate}[(i)]
\item 
$\mathbf{Step\, 1}$: tightness of the plans.\\
By Proposition \ref{pr: formulazione D-LET} there exist a sequence $\boldsymbol{\gamma}_n\in \mathscr{M}(X_1\times X_2)$ and $\hat{\mathsf{d}}_n$ pseudo-metric couplings of $\mathsf{d}_1,\mathsf{d}_2$ such that
\begin{equation}\label{dis: maggiorazione da def inf}
\sum_{i=1}^2D_F((\gamma_n)_i||\mu_i)+\int_{X_1\times X_2}\ell\big(\hat{\mathsf{d}}_n(x,y)\big)\dd\boldsymbol{\gamma}_n<\DET^{1/a}((X_1,\mathsf{d}_1,\mu_1),(X_2,\mathsf{d}_2,\mu_2))+\frac{1}{n}.
\end{equation}
Since the entropy functionals with respect to the fixed measures $\mu_1$ and $\mu_2$ are bounded, we can apply Theorems \ref{th: Prokhorov} and Lemma \ref{lem: compattezza sottolivelli entropia} in order to obtain the existence of subsequences (from now on we will not relabel them) such that $(\gamma_n)_i$ converges weakly to some $\gamma^i\in \mathscr{M}(X_i)$, $i=1,2$. Since $(\gamma_n)_i$ are marginals of the measure $\boldsymbol{\gamma}_n$, the tightness of $(\gamma_n)_i$ implies the tightness of $\boldsymbol{\gamma}_n$, so that the sequence $\boldsymbol{\gamma}_n\in \mathscr{M}(X_1\times X_2)$ is converging to some $\boldsymbol{\gamma}$. Moreover, by the continuity of the operator $\pi^i_{\sharp}$ with respect to the weak topology, the marginals of $\boldsymbol{\gamma}$ coincide with $\gamma^i$, $i=1,2.$ We notice that if $\boldsymbol{\gamma}$ is the null measure the proof is concluded by taking any pseudo-metric coupling $\hat{\mathsf{d}}$ between $\mathsf{d}_1$ and $\mathsf{d}_2$.

\medskip
$\mathbf{Step\, 2}$: pre-compactness of the pseudo-metric couplings.
\\Regarding the sequence $\hat{\mathsf{d}}_n$, by the triangle inequality we have that
$$|\hat{\mathsf{d}}_n(x_1,y_1)-\hat{\mathsf{d}}_n(x_2,y_2)|\leq |\mathsf{d}_1(x_1,x_2)+\mathsf{d}_2(y_1,y_2)|.$$
In particular, $\hat{\mathsf{d}}_n$ is uniformly $1$-Lipschitz with respect to the complete and separable metric $\mathsf{d}_1+\mathsf{d}_2$ on $X_1\times X_2$.
We claim it is also uniformly bounded in a point. To see this, take $(\bar{x},\bar{y})\in \mathsf{supp}(\boldsymbol{\gamma})$: since $\boldsymbol{\gamma}_n$ weakly converges to $\boldsymbol{\gamma}$ for every $r,\epsilon>0$ and for all $n$ sufficiently large we have
$$\boldsymbol{\gamma}_n\left(B_r(\bar{x})\times B_r(\bar{y})\right)\ge \boldsymbol{\gamma}\left(B_r(\bar{x})\times B_r(\bar{y})\right)-\epsilon.$$ 
Fix $r>0$ and  suppose by contradiction that there exists a subsequence (not relabeled) such that $2r\le \hat{\mathsf{d}}_n(\bar{x},\bar{y})\rightarrow +\infty$. For $\epsilon=\epsilon(r)$ small enough, from \eqref{dis: maggiorazione da def inf}, the fact that $(\bar{x},\bar{y})\in \mathsf{supp}(\boldsymbol{\gamma})$ and $\ell$ is increasing we infer the existence of some positive constants $C,c$ such that for all $n$ sufficiently large
\begin{multline*}
C>\int_{X_1\times X_2}\ell\left(\hat{\mathsf{d}}_n(x,y)\right)\dd\boldsymbol{\gamma}_n(x,y)\geq \int_{B_r(\bar{x})\times B_r(\bar{y})}\ell\left(\hat{\mathsf{d}}_n(\bar{x},\bar{y})-2r\right)\dd\boldsymbol{\gamma}_n(x,y)\\
\geq\ell\left(\hat{\mathsf{d}}_n(\bar{x},\bar{y})-2r\right)[\boldsymbol{\gamma}(B_r(\bar{x})\times B_r(\bar{y}))-\epsilon]\geq c\ell\left(\hat{\mathsf{d}}_n(\bar{x},\bar{y})-2r\right).
\end{multline*}
Since $\ell$ has bounded sublevels,  this implies that there exists a constant $K$ such that $\hat{\mathsf{d}}_n(\bar{x},\bar{y})<K$ for every $n$ that leads to a contradiction. 
\\ We can thus apply Ascoli-Arzelà's theorem to infer the existence of a limit function $\mathsf{d}:X_1\times X_2\rightarrow [0,\infty)$ such that $\mathsf{d}_n$ converges (up to subsequence) pointwise to $\mathsf{d}$ and the convergence is uniform on compact sets. We can extend $\mathsf{d}$ to $(X_1 \sqcup X_2)\times (X_1 \sqcup X_2)$ in order to get a limit pseudo-metric coupling, that we denote in the same way. 

\medskip
$\mathbf{Step\, 3}$: passing to the limit.\\
Next, we pass to the limit in the following expression
$$\sum_{i=1}^2D_F((\gamma_n)_i||\mu_i)+\int_{X_1\times X_2}\ell\big(\hat{\mathsf{d}}_n(x,y)\big)\dd\boldsymbol{\gamma}_n.$$
By Lemma \ref{lem: proprieta funzionale entropia}, the entropy is jointly lower semicontinuous and thus
\begin{equation*}
\liminf_n D_F((\gamma_n)_i||\mu_i)\geq D_F(\gamma_i||\mu_i).
\end{equation*}
So, it is sufficient to prove that
\begin{equation}\label{dis: semicontinuita' doppia successione}
\liminf_n \int_{X_1\times X_2}\ell\big(\hat{\mathsf{d}}_n(x,y)\big)\dd\boldsymbol{\gamma}_n \geq \int_{X_1\times X_2}\ell\big(\hat{\mathsf{d}}(x,y)\big)\dd\boldsymbol{\gamma}.
\end{equation}
Using the equi-tightness of $\{\boldsymbol{\gamma}_k\}$ we can find a sequence of compact sets $K_{1,n}\subset X_1$ and $K_{2,n}\subset X_2$ such that 
$$\boldsymbol{\gamma}_k\big(X_1\times X_2 \setminus (K_{1,n}\times K_{2,n})\big)\le\frac{1}{n}$$
for every $k$. 
We define $\ell_m(r):=\min(\ell(r),m),$ so that the sequence of functions $(x,y)\mapsto \ell_m(\mathsf{d}_n(x,y))$ converges uniformly on compact subsets of $X_1\times X_2$, as $n\to \infty$. Possibly by taking a further subsequence via a diagonal argument, we can infer that $\|\ell_m(\mathsf{d})-\ell_m(\mathsf{d}_n)\|_{\infty;n}\rightarrow 0$ when $n\rightarrow \infty$, where we denote by $\|\cdot \|_{\infty;n}$ the supremum norm in the set $K_{1,n}\times K_{2,n}.$
Let $M$ be a positive constant such that $\gamma_n(X_1\times X_2)\leq M$ for every $n$. We can bound the integral on the left hand side of \eqref{dis: semicontinuita' doppia successione} in the following way:
\begin{multline*}
\int_{X_1\times X_2} \ell(\hat{\mathsf{d}}_n) \dd\boldsymbol{\gamma}_n \ge
\int_{X_1\times X_2} \ell_m(\hat{\mathsf{d}}_n) \dd\boldsymbol{\gamma}_n \ge 
\int_{K^1_n\times K^2_n} \ell_m(\hat{\mathsf{d}}_n) \dd\boldsymbol{\gamma}_n \\
\ge 
 \int_{K^1_n\times K^2_n} \ell_m(\hat{\mathsf{d}}) \dd\boldsymbol{\gamma}_n - M \|\ell_m(\hat{\mathsf{d}})-\ell_m(\hat{\mathsf{d}}_n)\|_{\infty;n}   	\\		
\ge 
 \int_{X_1\times X_2} \ell_m(\hat{\mathsf{d}}) \dd\boldsymbol{\gamma}_n - M \|\ell_m(\hat{\mathsf{d}})-\ell_m(\hat{\mathsf{d}}_n)\|_{\infty;n} - m /n.
\end{multline*}
Now we can pass to the limit with respect to $n$ using the weak convergence of $\{\boldsymbol{\gamma}_n\}$, and we obtain
$$\liminf_{n} \int_{X_1\times X_2} \ell(\hat{\mathsf{d}}_n) \dd\boldsymbol{\gamma}_n\geq \int_{X_1\times X_2} \ell_m(\hat{\mathsf{d}}) \dd\boldsymbol{\gamma}$$
and then we conclude using the Beppo Levi's monotone convergence theorem with respect to $m$.

\item Without loss of generality we assume $\mathsf{supp}(\mu_i)=X_i$. By the previous point we know the existence of an optimal measure $\boldsymbol{\gamma}\in\mathscr{M}(X_1\times X_2)$ and an optimal pseudo-metric coupling $\hat{\mathsf{d}}$ between $\mathsf{d}_1$ and $\mathsf{d}_2$. We consider the complete and separable metric space $(\tilde{X},\tilde{\di})$ constructed as in Lemma \ref{lem: quotient disjoint union is metric}.
Denoting by $p: X_1\sqcup X_2\rightarrow \tilde{X}$ the projection to the quotient and using the identification
$$X_1\sqcup X_2=X_1\times\{0\}\cup X_2\times \{1\},$$
we notice that $X_1\times X_2 \hookrightarrow \tilde{X} \times \tilde{X}$ via the injective Borel map 
$$\boldsymbol{\psi}(x_1,x_2)=(\psi^1(x_1),\psi^2(x_2)):=(p(x_1,0),p(x_2,1)).$$
Moreover, we also have that $\psi^i$ is an isometry of $(X_i,\di_i)$ onto its image in $(\tilde{X},\tilde{\di})$, $i=1,2$. Thus, denoting by $\gamma_i$ the marginals of $\boldsymbol{\gamma}$, we can consider the measures $\boldsymbol{\psi}_{\sharp}\boldsymbol{\gamma}$ whose projections are $(\psi^1)_{\sharp}\gamma_1$ and $(\psi^2)_{\sharp}\gamma_2$. Using Lemma \ref{lem: invariance divergence under injection} we know that
\begin{equation}\label{eq: uguaglianza divergenze ottimali}
D_F(\gamma_i||\mu_i)=D_F((\psi^i)_{\sharp}\gamma_i\,||\,(\psi^i)_{\sharp}\mu_i), \qquad i=1,2.
\end{equation}
By recalling the definition of $\tilde{\di}$, we also have
\begin{equation}\label{eq: uguaglianza costi ottimali}
\int_{X_1\times X_2}\ell\big(\hat{\mathsf{d}}(x,y)\big)\dd\boldsymbol{\gamma}=\int_{\tilde{X}\times \tilde{X}}\ell\big(\tilde{\di}(x,y)\big)\dd(\boldsymbol{\psi}_{\sharp}\boldsymbol{\gamma}).
\end{equation}
Thus, as a consequence of \eqref{eq: uguaglianza divergenze ottimali},  \eqref{eq: uguaglianza costi ottimali} and the optimality of $\boldsymbol{\gamma}$ and $\hat{\di}$, the equality \eqref{eq: esistenza metrico ottimo} holds on $(\tilde{X},\tilde{\di})$ (with optimal measure $\boldsymbol{\psi}_{\sharp}\boldsymbol{\gamma}$). 
\end{enumerate}
\end{proof}
\begin{remark}
It is clear that the optimal coupling $\hat{\mathsf{d}}$ whose existence is proven in the previous Lemma is in general only a pseudo-metric and not a metric on $X_1\sqcup X_2$. To see this, it is sufficient to consider two isomorphic metric measure spaces $(X_1,\mathsf{d}_1,\mu_1)$, $(X_2,\mathsf{d}_2,\mu_2)$. If we denote by $\psi:X_1\rightarrow X_2$ the isometry between $(X_1,\mathsf{d}_1)$ and $(X_2,\mathsf{d}_2)$, the optimal coupling $\hat{\mathsf{d}}$ satisfies $\hat{\di}(x_1,\psi(x_1))=0$ for $\mu_1$-a.e $x_1$.
\end{remark}

The next theorem is the main result of the paper.
\begin{theorem}\label{th main}
Let $\Det$ be a regular Entropy-Transport distance induced by $(a,F,\ell)$. Then $(\boldsymbol{\mathrm{X}},\DET)$ is a complete and separable metric space. It is also a length (resp. geodesic) space if $\Det$ is a length (resp. geodesic) metric. 
\end{theorem}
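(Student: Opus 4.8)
The steps I would carry out, in order, are: (i) symmetry and finiteness of $\DET$ (immediate from Definition~\ref{def: D-ET} and the corresponding properties of $\Det$); (ii) the triangle inequality; (iii) non-degeneracy; (iv) separability and completeness of $(\mathbf X,\DET)$; (v) the length/geodesic statement. For (ii), given m.m.s.\ $(X_i,\di_i,\mu_i)$, $i=1,2,3$, I would use Lemma~\ref{lem:esistenza ottimo}(i) to fix optimal \emph{finite-valued} pseudo-metric couplings $\hat{\di}_{12}$ on $X_1\sqcup X_2$ and $\hat{\di}_{23}$ on $X_2\sqcup X_3$, together with optimal measure couplings $\boldsymbol\gamma_{12},\boldsymbol\gamma_{23}$, and then glue $\hat{\di}_{12}$ and $\hat{\di}_{23}$ along the common factor $X_2$ by the shortest-chain formula. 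This produces a finite-valued pseudo-metric coupling $\hat{\di}$ on $X_1\sqcup X_2\sqcup X_3$ whose restriction to each $X_i\sqcup X_j$ is pointwise $\le\hat{\di}_{ij}$; passing to the metric quotient (Lemma~\ref{lem: quotient disjoint union is metric}) one gets a complete separable metric space $(\hat Z,\hat{\di})$ containing all three supports isometrically. Since $\ell$ is increasing and $D_F$ is invariant under injective maps (Lemma~\ref{lem: invariance divergence under injection}), the measures $\boldsymbol\gamma_{12},\boldsymbol\gamma_{23}$ (viewed in $\mathscr M(\hat Z\times\hat Z)$) witness $(\Det)_{\hat Z}(\mu_1,\mu_2)\le\DET(X_1,X_2)$ and $(\Det)_{\hat Z}(\mu_2,\mu_3)\le\DET(X_2,X_3)$, and then $\DET(X_1,X_3)\le(\Det)_{\hat Z}(\mu_1,\mu_3)\le(\Det)_{\hat Z}(\mu_1,\mu_2)+(\Det)_{\hat Z}(\mu_2,\mu_3)$ by the triangle inequality of the genuine metric $(\Det)_{\hat Z}$ on $\mathscr M(\hat Z)$.

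\textbf{Non-degeneracy.} This is the first place where I depart from \cite{St1}: rather than comparing with Gromov's $\underline\Box_1$-distance, I would exploit the existence of optimal couplings \emph{both at the level of space and of measure}. If $\DET(X_1,X_2)=0$, Lemma~\ref{lem:esistenza ottimo}(ii) gives a complete separable metric space $(\tilde X,\tilde{\di})$ and isometric embeddings $\psi^i$ of $\mathsf{supp}(\mu_i)$ with $(\Det)_{\tilde{\di}}(\psi^1_\sharp\mu_1,\psi^2_\sharp\mu_2)=0$; since $(\Det)_{\tilde{\di}}$ is a genuine metric on $\mathscr M(\tilde X)$, this forces $\psi^1_\sharp\mu_1=\psi^2_\sharp\mu_2=:\nu$. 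As an isometric embedding of a complete space, each $\psi^i$ has closed image and preserves the support of the push-forward, so $\psi^1(\mathsf{supp}\,\mu_1)=\mathsf{supp}\,\nu=\psi^2(\mathsf{supp}\,\mu_2)$, and $(\psi^2)^{-1}\circ\psi^1$ is an isometry of $\mathsf{supp}\,\mu_1$ onto $\mathsf{supp}\,\mu_2$ pushing $\mu_1$ to $\mu_2$; hence the two spaces are isomorphic. (Equivalently, Lemma~\ref{lem:esistenza ottimo}(i) provides optimal $(\boldsymbol\gamma,\hat{\di})$ with $0=\sum_i D_F(\gamma_i||\mu_i)+\int\ell(\hat{\di})\,\dd\boldsymbol\gamma$, which forces each non-negative summand to vanish.)

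\textbf{Separability and completeness.} Separability follows from Lemma~\ref{lem: proprieta' di base}: part~(iii) reduces density to the countable family of finite m.m.s.\ $M\sum_{i=1}^n\delta_{x_i}$ with $M\in\mathbb Q_+$ and rational mutual distances, which is dense by parts (iv)--(v) and the continuity of $\ell$. For completeness, given a Cauchy sequence I would pass to a subsequence with $\DET(X_n,X_{n+1})\le 2^{-n}$, take optimal finite-valued pseudo-metric couplings $\hat{\di}_n$ on $X_n\sqcup X_{n+1}$ and optimal $\boldsymbol\gamma_n$ (Lemma~\ref{lem:esistenza ottimo}(i)), and glue all the $\hat{\di}_n$ along the shared copies of $X_n$ by shortest chains; the resulting pseudo-metric on $\bigsqcup_n X_n$ is automatically finite-valued because each $\hat{\di}_n$ is, and every chain involved is finite. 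Its metric quotient, completed, is a complete separable metric space $(\hat Z,\hat{\di})$ in which each $X_n$ embeds isometrically, with the restriction of $\hat{\di}$ to $X_n\sqcup X_{n+1}$ pointwise $\le\hat{\di}_n$; as in the triangle-inequality step this yields $(\Det)_{\hat Z}(\mu_n,\mu_{n+1})\le\DET(X_n,X_{n+1})\le 2^{-n}$, so $(\mu_n)$ is Cauchy in $(\mathscr M(\hat Z),(\Det)_{\hat Z})$, which is complete by the very definition of a regular Entropy-Transport distance. Its limit $\mu_\infty$ defines $(\mathsf{supp}\,\mu_\infty,\hat{\di},\mu_\infty)\in\mathbf X$, and $\DET(X_n,(\mathsf{supp}\,\mu_\infty,\hat{\di},\mu_\infty))\le(\Det)_{\hat Z}(\mu_n,\mu_\infty)\to 0$.

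\textbf{Length/geodesic property, and the main obstacle.} Assume the corresponding property for $\Det$, i.e.\ that $(\mathscr M(X),\Det)$ is a length (resp.\ geodesic) space whenever $(X,\di)$ is. I would \emph{not} argue by approximation via finite metric spaces (the entropic term makes this unavailable); instead, realise $X_1,X_2$ in a common complete separable $(\tilde X,\tilde{\di})$ with $(\Det)_{\tilde{\di}}(\psi^1_\sharp\mu_1,\psi^2_\sharp\mu_2)=\DET(X_1,X_2)$ via Lemma~\ref{lem:esistenza ottimo}(ii), and then embed $(\tilde X,\tilde{\di})$ isometrically into a complete separable \emph{geodesic} space $(\hat X,\hat{\di})$ --- for instance a closed convex subset of a separable Banach space containing an isometric copy of $\tilde X$ (the ``slight modification of Kuratowski's embedding'' alluded to in the introduction). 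Superlinearity of $F$ forces any competitor between two measures supported on $\tilde X$ to be supported on $\tilde X\times\tilde X$, where $\hat{\di}=\tilde{\di}$, so $(\Det)_{\hat{\di}}(\psi^1_\sharp\mu_1,\psi^2_\sharp\mu_2)=\DET(X_1,X_2)$; a geodesic (resp.\ almost length-minimising curve) $(\nu_t)_{t\in[0,1]}$ in $(\mathscr M(\hat X),(\Det)_{\hat{\di}})$ projects to the curve $t\mapsto(\mathsf{supp}\,\nu_t,\hat{\di},\nu_t)$ in $\mathbf X$, and since $\DET\le(\Det)_{\hat{\di}}$ along it, together with the triangle inequality this exhibits $(\mathbf X,\DET)$ as geodesic (resp.\ length). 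The main obstacle --- beyond the constructions behind Proposition~\ref{pr: formulazione D-LET} and Lemma~\ref{lem:esistenza ottimo}, which I take as given --- is the completeness argument: one has to assemble countably many optimal pseudo-metric couplings into a single Polish ambient space while keeping the transport costs under control, and this is exactly where finiteness of the optimal couplings and monotonicity of $\ell$ are essential.
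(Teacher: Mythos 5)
Your proposal is correct and follows essentially the same architecture as the paper's proof: the triangle inequality and completeness are obtained by gluing (near-)optimal pseudo-metric couplings along shared factors into a single Polish ambient space and invoking the triangle inequality, respectively the completeness, of $\Det$ there; separability goes through finitely supported spaces via Lemma \ref{lem: proprieta' di base}; and the length/geodesic property uses the modified Kuratowski embedding into a complete, separable, geodesic space, with superlinearity of $F$ guaranteeing that the Entropy-Transport cost is unchanged by the enlargement, exactly as in the paper. The one genuine (though local) deviation is the non-degeneracy step: you apply Lemma \ref{lem:esistenza ottimo}{\rm(ii)} and the fact that $(\Det)_{\tilde{\di}}$ is a true metric on $\mathscr{M}(\tilde{X})$ to force $\psi^1_{\sharp}\mu_1=\psi^2_{\sharp}\mu_2$, and then recover the isomorphism as $(\psi^2)^{-1}\circ\psi^1$ (using that isometric images of the complete supports are closed and carry the support of the push-forward), whereas the paper works with Lemma \ref{lem:esistenza ottimo}{\rm(i)} and builds the isometry directly from the support of the optimal plan $\boldsymbol{\gamma}$ together with the vanishing of $\hat{\di}$ there. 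Both variants rest on the same existence lemma; yours is slightly slicker since it delegates the work to the already-established fact that $\Det$ separates measures, while the paper's argument is more self-contained at the level of couplings. Your cosmetic preference for exactly optimal couplings (via Lemma \ref{lem:esistenza ottimo}) where the paper uses $\varepsilon$-optimal ones (via Proposition \ref{pr: formulazione D-LET}) in the triangle-inequality, completeness and length steps changes nothing of substance; just note, in the separability step, that approximating a finite distance matrix by one with rational entries requires choosing the rational perturbations so as to preserve the triangle inequalities (e.g. perturbing all off-diagonal entries upward by comparable amounts), a detail the paper sidesteps by parametrising admissible matrices as a separable subset of Euclidean space.
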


\begin{proof}
$\mathbf{Step\, 1}$: $\DET$ defines a metric.\\
It is clear that $\DET$ is symmetric, finite valued, nonnegative and 
\begin{equation*}
\DET\big((X_1,\mathsf{d}_1,\mu_1),(X_2,\mathsf{d}_2,\mu_2)\big)=0 \ \  \textrm{if} \  \ (X_1,\mathsf{d}_1,\mu_1)=(X_2,\mathsf{d}_2,\mu_2).
\end{equation*}

We claim that $\DET\big((X_1,\mathsf{d}_1,\mu_1),(X_2,\mathsf{d}_2,\mu_2)\big)=0$ implies that the metric measure spaces $(X_1,\mathsf{d}_1,\mu_1)$ and $(X_2,\mathsf{d}_2,\mu_2)$ are isomorphic. By Lemma \ref{lem:esistenza ottimo} there exist a measure $\boldsymbol{\gamma}\in \mathscr{M}(X_1\times X_2)$ and a pseudo-metric coupling $\hat{\mathsf{d}}$ such that
$$0=\sum_{i=1}^2D_F\big(\gamma_i||\mu_i\big)+\int_{X_1\times X_2}\ell\big(\hat{\mathsf{d}}(x,y)\big)\dd\boldsymbol{\gamma}.$$
All the terms are nonnegative, so that $D_F\big(\gamma_i||\mu_i\big)=0$ and thus $\gamma_i=\mu_i$, $i=1,2$. Moreover, since $\ell(d)=0$ if and only if $d=0$, it follows that $\hat{\mathsf{d}}(x,y)=0$ for $\boldsymbol{\gamma}$-a.e $(x,y)$. We also have
\begin{equation}\label{eq:dxy=0gamma}
\hat{\mathsf{d}}(x,y)=0 \quad \text{for all } (x,y)\in  \mathsf{supp}(\boldsymbol{\gamma}).
\end{equation}
To see this, let $(\bar{x},\bar{y})\in  \mathsf{supp}(\boldsymbol{\gamma})$ so that for every $r>0$ we have $\boldsymbol{\gamma}(B_r(\bar{x},\bar{y}))>0$ where 
$$B_r((\bar{x},\bar{y}))=\{(x,y)\in X_1\times X_2\,|\, \mathsf{d}_1(\bar{x},x)+\mathsf{d}_2(\bar{y},y)<r\}.$$ 
We consider a sequence of balls of radius $r_n:=1/n, n\in \mathbb{N},$ and use the fact that $\hat{\mathsf{d}}(x,y)=0$ for $\boldsymbol{\gamma}$-a.e $(x,y)$ to infer the existence of a sequence of points $(x_n, y_n)\in B_{r_n}((\bar{x},\bar{y}))$ such that $\hat{\mathsf{d}}(x_n,y_n)=0$. Thus 
$$\hat{\mathsf{d}}(\bar{x},\bar{y})\le \mathsf{d}_1(\bar{x},x_n)+\hat{\mathsf{d}}(x_n,y_n)+\mathsf{d}_2(\bar{y},y_n)<1/n.$$
Sending $n\to +\infty$ and using the arbitrariness of $(\bar{x},\bar{y})$, the claim \eqref{eq:dxy=0gamma} follows.
\\Since $\mathsf{d}_1$ and $\mathsf{d}_2$ are metrics, we infer that for every $x_{1}\in  \mathsf{supp}(\mu_{1})$ there exists a unique  $x_{2}\in  \mathsf{supp}(\mu_{2})$ such that $(x_{1},x_{2})\in  \mathsf{supp}(\boldsymbol{\gamma})$. Indeed, for any $x_2,\tilde{x}_{2}\in  \mathsf{supp}(\mu_{2})$ such that $(x_{1},x_{2}), (x_{1},\tilde{x}_{2})\in  \mathsf{supp}(\boldsymbol{\gamma})$ we have 
$$\mathsf{d}_2(x_2,\tilde{x}_{2})=\hat{\mathsf{d}}(x_2,\tilde{x}_{2})\le \hat{\mathsf{d}}(x_{2},x_1)+\hat{\mathsf{d}}(\tilde{x}_{2}, x_1)=0$$ and thus $x_2=\tilde{x}_2$.
Switching the role of $X_{1}$ and $X_{2}$ in the argument above, we obtain the existence of  a  bijection $\psi:  \mathsf{supp}(\mu_{1}) \rightarrow  \mathsf{supp}(\mu_{2})$ such that $\boldsymbol{\gamma}=(\mathrm{Id},\psi)_{\sharp}\mu_1$ and (in virtue of \eqref{eq:dxy=0gamma})
\begin{equation}\label{eq:dxpsix=0}
\hat{\mathsf{d}}(x,\psi(x))=0 \quad \text{for all } x\in  \mathsf{supp}(\mu_{1}).
\end{equation}
Let $x,y\in \mathsf{supp}(\mu_{1}) $, from \eqref{eq:dxpsix=0} and the triangle inequality it follows
\begin{equation*}
\begin{aligned}
\mathsf{d_1}(x,y)=\hat{\mathsf{d}}(x,y)\le \hat{\mathsf{d}}(x,\psi(x))+\hat{\mathsf{d}}(\psi(x),\psi(y))+\hat{\mathsf{d}}(y,\psi(y))=\mathsf{d_2}(\psi(x),\psi(y)),\\
\mathsf{d_2}(\psi(x),\psi(y))=\hat{\mathsf{d}}(\psi(x),\psi(y))\le \hat{\mathsf{d}}(x,\psi(x))+\hat{\mathsf{d}}(x,y)+\hat{\mathsf{d}}(y,\psi(y))=\mathsf{d_1}(x,y),
\end{aligned}
\end{equation*}
which implies that $\psi:  \mathsf{supp}(\mu_{1}) \rightarrow  \mathsf{supp}(\mu_{2})$ is an isometry. 
\\Hence $(X_1,\mathsf{d}_1,\mu_1)$ and $(X_2,\mathsf{d}_2,\mu_2)$ are isomorphic, as claimed. 
\medskip

Regarding the triangle inequality, let $(X_i,\mathsf{d}_i,\mu_i)$, $i=1,2,3$, be three metric measure spaces. From the definition of $\DET$ and Proposition \ref{pr: formulazione D-LET}, for every $\epsilon>0$ we find a pseudo-metric coupling $\mathsf{d}_{12}$ between $\mathsf{d}_{1}$ and $\mathsf{d}_{2}$, and a pseudo-metric coupling $\mathsf{d}_{23}$  between $\mathsf{d}_{2}$ and $\mathsf{d}_{3}$ such that
\begin{align*}
&\DET\big((X_1,\mathsf{d}_1,\mu_1),(X_2,\mathsf{d}_2,\mu_2)\big)\geq (\Det)_{\di_{12}}(\mu_1,\mu_2)-\epsilon, \\
&\DET\big((X_2,\mathsf{d}_2,\mu_2),(X_3,\mathsf{d}_3,\mu_3)\big)\geq (\Det)_{\di_{23}}(\mu_2,\mu_3)-\epsilon,
\end{align*}
where we have denoted by $(\Det)_{\mathsf{d}}$ the Entropy-Transport distance induced by the pseudo-metric $\mathsf{d}$.
Set $X:=X_1\sqcup X_2\sqcup X_3$ and define a pseudo-metric $\mathsf{d}$ on $X$ in the following way
\begin{equation*}
\mathsf{d}(x,y):=\begin{cases}
\mathsf{d}_{12}(x,y) \ \ \ \ \ &\textrm{if} \ x,y\in X_1\sqcup X_2 \\
\mathsf{d}_{23}(x,y) \ \ \ \ \ &\textrm{if} \ x,y\in X_2\sqcup X_3 \\
\inf_{z\in X_2}[\mathsf{d}_{12}(x,z)+\mathsf{d}_{23}(z,y)] \ \ &\textrm{if} \ x\in X_1 \ \textrm{and} \ y\in X_3\\
\inf_{z\in X_2}[\mathsf{d}_{23}(x,z)+\mathsf{d}_{12}(z,y)] \ \ &\textrm{if}\  x\in X_3 \ \textrm{and} \ y\in X_1.
\end{cases}
\end{equation*}
We notice that $\mathsf{d}$ coincides with $\mathsf{d}_i$ when restricted to $X_i$. By applying Proposition \ref{pr: formulazione D-LET}, the point (\ref{lem: dis metriche coincidenti}) of Lemma \ref{lem: proprieta' di base} and the triangle inequality of $(\Det)_{\mathsf{d}}$ we obtain
\begin{multline*}
\DET\big((X_1,\mathsf{d}_1,\mu_1),(X_3,\mathsf{d}_3,\mu_3)\big)\leq (\Det)_{\mathsf{d}}(\mu_1,\mu_3)\leq (\Det)_{\mathsf{d}}(\mu_1,\mu_2)+(\Det)_{\mathsf{d}}(\mu_2,\mu_3)\\
=(\Det)_{\mathsf{d}_{12}}(\mu_1,\mu_2)+(\Det)_{\mathsf{d}_{23}}(\mu_2,\mu_3)\\
\leq \DET\big((X_1,\mathsf{d}_1,\mu_1),(X_2,\mathsf{d}_2,\mu_2)\big)+\DET\big((X_2,\mathsf{d}_2,\mu_2),(X_3,\mathsf{d}_3,\mu_3)\big)+2\epsilon.
\end{multline*}
The conclusion follows since $\epsilon>0$ is arbitrary.

\medskip
$\mathbf{Step\, 2}$: Completeness of $\DET$.\\
In order to prove completeness, let $\{(X_n,\mathsf{d}_n,\mu_n)\}_{n\in \mathbb{N}}$ be a Cauchy sequence in the space $(\boldsymbol{\mathrm{X}},\DET)$. In order to have convergence of the full sequence, it is enough to prove that there exists a converging subsequence. 
Let us consider a subsequence such that
\begin{equation*}
\DET^{1/a}\big((X_{n_k},\mathsf{d}_{n_k},\mu_{n_k}),(X_{n_{k+1}},\mathsf{d}_{n_{k+1}},\mu_{n_{k+1}})\big)<2^{-(k+1)}.
\end{equation*}
By definition of $\DET$ and Proposition \ref{pr: formulazione D-LET}, we can find a measure $\boldsymbol{\gamma}_{k+1}\in\mathscr{M}(X_{n_k}\times X_{n_{k+1}})$ and a complete and separable metric coupling $\hat{\mathsf{d}}_{k+1}$ between $\mathsf{d}_{X_{n_k}}$ and $\mathsf{d}_{X_{n_{k+1}}}$ such that 
\begin{equation}\label{eq: bound th completezza}
\int_{X_{n_k}} F(\sigma_{n_k})\dd\mu_{n_k}+\int_{X_{n_{k+1}}} F(\sigma_{n_{k+1}})\dd\mu_{n_{k+1}}+\int_{X_{n_k}\times X_{n_{k+1}}}\ell\big(\hat{\mathsf{d}}_{k+1}\big)\dd\boldsymbol{\gamma}_{k+1}<2^{-k},
\end{equation}
where $\sigma_{n_k}$ (resp. $\sigma_{n_{k+1}}$) is the Radon-Nykodim derivative of the first (resp. second) marginal of $\gamma_{k+1}$ with respect to $\mu_{n_k}$ (resp. $\mu_{n_{k+1}}$).

Now we want to define a sequence $\big\{(X'_k,\mathsf{d}'_k)\big\}_{k=1}^{\infty}$ of metric spaces such that $X_{n_k}\subset X'_k$ and $X'_k\subset X'_{k+1}$. We proceed in the following way: we set
\begin{align*}
& \big(X'_1,\mathsf{d}'_1\big):=\big(X_{n_1},\mathsf{d}_{X_{n_1}}\big), \\
& X'_{k+1}:=X'_k\sqcup X_{n_{k+1}} \big/ \sim,
\end{align*}
where $x\sim y$ if $\mathsf{d}'_{k+1}(x,y)=0$ and the latter is defined as 
\begin{equation*}
\mathsf{d}'_{k+1}(x,y):=
\begin{cases}
\mathsf{d}'_{k}(x,y) \ \ &\textrm{if} \ \ x,y\in X'_k\\ 
\hat{\mathsf{d}}_{k+1}(x,y) \ \ &\textrm{if} \ \ x,y\in X_{n_k}\sqcup X_{n_{k+1}} \\ 
\inf_{z\in X_{n_k}}\mathsf{d}'_{k}(x,z)+\hat{\mathsf{d}}_{k+1}(z,y) \ \ &\textrm{if} \ \ x\in X'_k,\,  y\in X_{n_{k+1}}\\
\inf_{z\in X_{n_k}}\mathsf{d}'_{k}(y,z)+\hat{\mathsf{d}}_{k+1}(z,x) \ \ &\textrm{if} \ \ y\in X'_k,\,  x\in  X_{n_{k+1}}.
\end{cases} 
\end{equation*} 
From the definition of $\mathsf{d}'_k$, it is clear that we can endow the space $X':=\bigcup_{k=1}^{\infty} X'_k$ with a limit metric $\mathsf{d}'$. Now we consider the completion $(X,\mathsf{d})$ of $(X',\mathsf{d}')$ and we notice that $(X_{n_k},\mathsf{d}_{X_{n_k}})$ is isometrically embedded in this space for every $k$. Using the embedding, we can also define a measure $\bar{\mu}_{n_k}$ as the push-forward of the measure $\mu_{n_k}.$ Combining  the construction above  with \eqref{eq: bound th completezza} gives
\begin{align}
&(\Det)^{1/a}_\mathsf{d}(\bar{\mu}_{n_k},\bar{\mu}_{n_{k+1}}) \nonumber \\ 
&\qquad \leq \int_{X_{n_k}} F(\sigma_{n_k})\dd\mu_{n_k}+\int_{X_{n_{k+1}}} F(\sigma_{n_{k+1}})\dd\mu_{n_{k+1}}+\int_{X_{n_k}\times X_{n_{k+1}}}\ell\left(\hat{\mathsf{d}}_{k+1}\right)\dd\boldsymbol{\gamma}_{k+1}<2^{-k}, \label{eq:Detmubar}
\end{align}
where $(\Det)_\mathsf{d}$ is the regular Entropy-Transport distance computed in the space $(X,\mathsf{d}).$ In particular, \eqref{eq:Detmubar} implies that  $(\bar{\mu}_{n_{k}})_{k\in \mathbb{N}}$ is a Cauchy sequence in $ (\mathscr M(X),(\Det)_\mathsf{d})$. Since $(\Det)_\mathsf{d}$ is complete, there exists $\mu\in \mathscr{M}(X)$ such that $(\Det)^{1/a}_\mathsf{d}(\bar{\mu}_{n_{k}}, \mu)\to 0$. 
 
Using again that $(X_{n_k},\mathsf{d}_{X_{n_k}})$ is isometrically embedded in $(X,\mathsf{d})$ and the point \eqref{lem: dis metriche coincidenti} of Lemma \ref{lem: proprieta' di base}, we can conlude that
\begin{equation}
\DET\big((X_{n_k},\mathsf{d}_{n_k},\mu_{n_k}),(X,\mathsf{d},\mu)\big)\leq (\Det)_\mathsf{d}(\bar{\mu}_{n_k},\mu)\rightarrow 0.
\end{equation}

\medskip

$\mathbf{Step\, 3}$: Separability of $\DET$.

Thanks to \eqref{lem: densita' delta} of Lemma \ref{lem: proprieta' di base} it is enough to show that the set $\boldsymbol{\mathrm{X}}_*$, defined in \eqref{eq: densita' delta}, is separable. To this aim, we notice that $\boldsymbol{\mathrm{X}}_*$ can be written as $\bigsqcup_{n\in \mathbb{N}} \tilde{\mathcal{K}}_n$ where
$$\tilde{\mathcal{K}}_n:=\{(X,\mathsf{d},\mu)\in \boldsymbol{\mathrm{X}}_*: \mathsf{supp}(\mu) \ \textrm{has} \ n \ \textrm{points}\}.$$ 
Since the set of all $(D,M)=(D_{ij},M)\in\mathbb{R}_+^{n\times n}\times \mathbb{R}_{+}$ such that 
\begin{equation}
D_{ij}=D_{ji} \, , \ \ D_{ij}=0 \iff i=j\, , \ \ D_{ij}\leq D_{ik}+D_{kj}\, 
\end{equation}
is separable (as a subset of the Euclidean space), using \eqref{lem: dis tra delta} of Lemma \ref{lem: proprieta' di base} we get that 
$$\tilde{\mathcal{K}}_{n,M}:=\{(X,\mathsf{d},\mu)\in \boldsymbol{\mathrm{X}}_*: \mathsf{supp}(\mu) \ \textrm{has} \ n \ \textrm{points and } \mu(X)=nM\}
$$
is separable for every fixed $n\in {\mathbb N}, M>0$. The separability of $\tilde{\mathcal{K}}_n$ follows by the separability of $\tilde{\mathcal{K}}_{n,M}$ combined with \eqref{lem: dis riscalamento misura} of Lemma \ref{lem: proprieta' di base}.

\medskip

$\mathbf{Step\, 4}$: Length/geodesic property of $\DET$.
\\Let us start by proving the length property. 
Let  $(X_1,\mathsf{d}_1,\mu_1),\, (X_2,\mathsf{d}_2,\mu_2) \in \boldsymbol{\mathrm{X}}$. By definition of $\DET$, for every $\varepsilon>0$ we can find a complete and separable metric space $(X, \di)$ and isometric embeddings $\psi^i: \mathsf{supp}(\mu_i) \to X$,  $i=1,2$, such that
\begin{equation}\label{eq:DETeps}
\DET((X_{1}, \di_{1},\mu_{1}), (X_{2}, \di_{2},\mu_{2}) )\geq  (\Det)_\mathsf{d}( \mu_{1}, \mu_{2})-\varepsilon,
\end{equation}
where, as before, we identify $\mathsf{supp}(\mu_i)$ with its isometric image $\psi^i(\mathsf{supp} (\mu_i))$, and correspondingly  $\mu_i$ with $\psi^i_\sharp \mu_{i}$, $i=1,2$, in order to keep notation short.
\\Recall that, by slightly modifying the classical Kuratowski embedding, one can show that every complete and separable metric space can be isometrically embedded in a complete, separable and \emph{geodesic} metric space (see for instance \cite[Exercise 1c. Ch. 3$\frac{1}{2}.1$]{Grom} or \cite[Proposition 1.2.12]{GiPa}). Thus, recalling also Lemma \ref{lem: invariance divergence under injection}, without loss of generality we can assume that the complete and separable metric space $(X,\di)$ above is also \emph{geodesic}.
\\By assumption $(\Det)_\mathsf{d}$ is a length distance on $\mathscr{M}(X)$ since $(X,\di)$ is a length space,  so that we can find a curve $(\mu_{t})_{t\in [1,2]}\subset (\mathscr{M}(X), (\Det)_\mathsf{d})$  from $\mu_1$ to $\mu_2$ satisfying   
\begin{equation}\label{eq:GeodDet}
\mathsf{Length}_{ (\Det)_\mathsf{d}} ((\mu_{t})_{t\in [1,2]})\le (\Det)_\mathsf{d}(\mu_{1}, \mu_{2})+\varepsilon.
\end{equation}
Now, it is easy to check that the $\DET$-length of the curve of m.m.s. $((X, \di, \mu_{t}))_{t\in [1,2]}\subset \boldsymbol{\mathrm{X}}$ satisfies
\begin{equation}\label{LengthETDet}
{\mathsf {Length}}_{ \DET} ( ((X, \di, \mu_{t}))_{t\in [1,2]} )  \leq   \mathsf{Length}_{ (\Det)_\mathsf{d}}( (\mu_{t})_{t\in [1,2]}).
\end{equation}
Indeed the length of a curve is by definition the supremum of the sums of mutual distances over finite partitions \eqref{eq:defLength}, and for every partition $(t_{i})$ of $[1,2]$ it holds
$$
\sum_{i} \DET((X, \di,\mu_{t_{i+1}}), (X, \di,\mu_{t_{i}}) ) \leq  \sum_{i} (\Det)_\mathsf{d}(\mu_{t_{i+1}}, \mu_{t_{i}} ) \leq \mathsf{Length}_{ (\Det)_\mathsf{d}}( (\mu_{t})_{t\in [1,2]}).
$$ 
The combination of \eqref{eq:DETeps}, \eqref{eq:GeodDet} and \eqref{LengthETDet} gives
\begin{align*}
\mathsf{Length}_{ \DET} ( ((X, \di, \mu_{t}))_{t\in [1,2]} ) & \leq   \mathsf{Length}_{ (\Det)_\mathsf{d}}( (\mu_{t})_{t\in [1,2]}) \le (\Det)_\mathsf{d}(\mu_{1},\mu_{2}) +\varepsilon \nonumber\\
&\leq  \DET((X_{1}, \di_{1},\mu_{1}), (X_{2}, \di_{2},\mu_{2})) +2\varepsilon,
\end{align*}
as desired.

To prove the geodesic property in the case $\Det$ is a geodesic distance, we notice that we can follow verbatim the argument given above with $\varepsilon=0$. Here one has to notice that the existence of an \emph{optimal} complete and separable metric space on which \eqref{eq:DETeps} holds with $\varepsilon=0$ follows from (\ref{esistenza metrico ottimo}) of Lemma \ref{lem:esistenza ottimo}.
\end{proof}

\begin{remark}
It is proved in \cite[Proposition 8.3]{LMS} that $(\mathscr{M}(X),\hk)$ is a geodesic space when the underlying space $(X,\di)$ is geodesic. In particular, the last claim of Theorem \ref{th main} can be applied to the Hellinger-Kantorovich distance. 
\\To the best of our knowledge, up to now this is the only known example of regular Entropy-Transport geodesic distance (with the trivial exception of weighted variants of $\hk$ \cite{LMS1}).
\end{remark}

\subsection{Topology}\label{subsec: topology}
Let us introduce a notion of convergence for sequences of (equivalence classes of) metric measure spaces (see \cite[Definition 3.9]{GMS} for the corresponding notion in the context of \emph{pointed} metric measure spaces).
\begin{definition}
We say that a sequence $(X_n,\mathsf{d}_n,\mu_n)_{n\in \mathbb{N}}$ weakly measured-Gromov converges to $(X_{\infty},\mathsf{d}_{\infty},\mu_{\infty})$ if there exist a complete and separable metric space $(X,\mathsf{d})$ and isometric embeddings $\iota_n:X_n\to X$, $n\in \bar{\mathbb{N}}$, such that $(\iota_n)_{\sharp}\mu_n \to (\iota_{\infty})_{\sharp}\mu_{\infty}$ weakly in $\mathscr{M}(X)$.
\end{definition}

In the next Theorem we see that this notion of convergence actually coincides with the convergence induced by any Sturm-Entropy-Transport distance.

\begin{theorem}\label{th: DET vs mG convergence}
Let $\Det$ be a regular Entropy-Transport distance induced by $(a,F,\ell)$. A sequence $(X_n,\mathsf{d}_n,\mu_n)_{n\in \mathbb{N}}$ weakly measured Gromov converges to $(X_{\infty},\mathsf{d}_{\infty},\mu_{\infty})$ if and only if 
\begin{equation}\label{eq: conv to 0 seq}
\DET\left((X_n,\mathsf{d}_n,\mu_n), (X_{\infty},\mathsf{d}_{\infty},\mu_{\infty})\right) \to 0 \quad \textrm{as} \ n \to \infty.
\end{equation}
\end{theorem}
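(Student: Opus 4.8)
The plan is to prove the two implications separately; the reverse one is essentially immediate, while the direct one rests on a gluing construction. For the reverse direction, suppose $(X_n,\di_n,\mu_n)$ weakly measured-Gromov converges to $(X_\infty,\di_\infty,\mu_\infty)$, so that there are a complete separable metric space $(X,\di)$ and isometric embeddings $\iota_n\colon X_n\to X$, $n\in\bar{\mathbb N}$, with $(\iota_n)_\sharp\mu_n\rightharpoonup(\iota_\infty)_\sharp\mu_\infty$ in $\mathscr M(X)$. Since $\iota_n$ restricts to an isometry of $\mathsf{supp}(\mu_n)$ onto $\mathsf{supp}\big((\iota_n)_\sharp\mu_n\big)$ pushing $\mu_n$ to $(\iota_n)_\sharp\mu_n$, the metric measure spaces $(X_n,\di_n,\mu_n)$ and $\big(X,\di,(\iota_n)_\sharp\mu_n\big)$ are isomorphic, so point~(\ref{lem: dis metriche coincidenti}) of Lemma~\ref{lem: proprieta' di base} gives $\DET\big((X_n,\di_n,\mu_n),(X_\infty,\di_\infty,\mu_\infty)\big)\le(\Det)_\di\big((\iota_n)_\sharp\mu_n,(\iota_\infty)_\sharp\mu_\infty\big)$; the right-hand side tends to $0$ because a regular Entropy-Transport distance induces the weak topology on $\mathscr M(X)$.

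For the direct implication, assume $\DET\big((X_n,\di_n,\mu_n),(X_\infty,\di_\infty,\mu_\infty)\big)\to0$. For each $n$ I would use Definition~\ref{def: D-ET} (or point~(\ref{esistenza metrico ottimo}) of Lemma~\ref{lem:esistenza ottimo}) to pick a complete separable metric space $(Y_n,\varrho_n)$ and isometric embeddings $\psi^1_n\colon\mathsf{supp}(\mu_n)\to Y_n$, $\psi^2_n\colon\mathsf{supp}(\mu_\infty)\to Y_n$ with $(\Det)_{\varrho_n}\big((\psi^1_n)_\sharp\mu_n,(\psi^2_n)_\sharp\mu_\infty\big)\le\DET\big((X_n,\di_n,\mu_n),(X_\infty,\di_\infty,\mu_\infty)\big)+\frac1n\to0$. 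The key move is then to glue the spaces $Y_n$ along the common copy $S:=\mathsf{supp}(\mu_\infty)$: on the set obtained from $\bigsqcup_n Y_n$ by identifying $\psi^2_n(s)$ with $\psi^2_m(s)$ for all $s\in S$ and all $n,m\in\mathbb N$, one defines $\di$ to coincide with $\varrho_n$ on each $Y_n$, and, for $x\in Y_n$ and $y\in Y_m$ with $n\ne m$, sets $\di(x,y):=\inf_{s\in S}\big[\varrho_n\big(x,\psi^2_n(s)\big)+\varrho_m\big(\psi^2_m(s),y\big)\big]$. Since a support is closed, the copies of $S$ are closed in each $Y_n$, and one checks that $\di$ is a metric; the glued space is separable and, replacing it by its completion if necessary, yields a complete separable metric space $(X,\di)$ into which every $Y_n$ — hence in particular $\mathsf{supp}(\mu_n)$ via $\psi^1_n$ and $\mathsf{supp}(\mu_\infty)=S$ — embeds isometrically. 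Write $\iota_n,\iota_\infty$ for these embeddings, and recall that a m.m.s.\ is identified with the restriction to its support, so one may take $X_n=\mathsf{supp}(\mu_n)$. Pushing an optimal Entropy-Transport plan from $Y_n\times Y_n$ to $X\times X$ and invoking Lemma~\ref{lem: invariance divergence under injection} for the entropic terms gives $(\Det)_\di\big((\iota_n)_\sharp\mu_n,(\iota_\infty)_\sharp\mu_\infty\big)\le(\Det)_{\varrho_n}\big((\psi^1_n)_\sharp\mu_n,(\psi^2_n)_\sharp\mu_\infty\big)\to0$, so $(\iota_n)_\sharp\mu_n\rightharpoonup(\iota_\infty)_\sharp\mu_\infty$ in $\mathscr M(X)$ by the weak-topology property of $\Det$; this is exactly weak measured-Gromov convergence of $(X_n,\di_n,\mu_n)$ to $(X_\infty,\di_\infty,\mu_\infty)$.

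The step I expect to be the main — if ultimately routine — obstacle is checking that the amalgamated pseudo-metric $\di$ in the direct implication is a genuine complete metric: the triangle inequality across three distinct pieces $Y_n,Y_m,Y_k$ follows by applying the triangle inequality inside the middle piece and using that all copies of $S$ are identified, while $\di(x,y)>0$ for $x\in Y_n\setminus S$, $y\in Y_m\setminus S$ ($n\ne m$) uses the closedness of $S$ to keep $x$ at positive distance from the glued copy of $S$; completeness is then obtained by passing to the completion. Everything else follows formally from Lemmas~\ref{lem: proprieta' di base}, \ref{lem:esistenza ottimo} and \ref{lem: invariance divergence under injection} together with the fact that $\Det$ metrizes weak convergence on $\mathscr M(X)$.
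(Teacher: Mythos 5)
Your proof is correct and takes essentially the same route as the paper: the reverse implication follows from the fact that $\Det$ metrizes weak convergence on a common ambient space, and the forward one from gluing the nearly optimal ambient spaces along the common copy of $X_\infty$, completing, and using again that $\Det$ induces the weak topology. The only cosmetic difference is that you verify the amalgamated distance is a genuine metric (note that the closedness of $\psi^2_n(\mathsf{supp}(\mu_\infty))$ in $Y_n$ really comes from the completeness of the support, an isometric image of a complete space being closed, not merely from its closedness in $X_\infty$), whereas the paper simply quotients by the zero-distance relation before completing.
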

\begin{proof}
Let us suppose the validity of \eqref{eq: conv to 0 seq}. By definition of $\DET$ we know that there exist a complete and separable metric space $(Y_n,\mathsf{d}_{Y_n})$ and isometric embeddings $\psi_n,\psi_n^{\infty}$ of $(X_n,\mathsf{d}_n)$, $(X_{\infty},\mathsf{d}_{\infty})$ respectively, in $Y_n$ such that 
\begin{equation}\label{eq: weak conv in Y_n}
\Det ((\psi_n)_{\sharp}\mu_n,(\psi_n^{\infty})_{\sharp}\mu_{\infty})<\frac{1}{n} \, ,
\end{equation}
where $\Det$ is computed in the space $Y_n$.
We now define $Y:=\sqcup_n X_n$, $n\in \bar{\mathbb{N}}$ endowed with the pseudo-metric $\mathsf{d}_Y$
\begin{equation*}
\mathsf{d}_Y(y,y'):=
\begin{cases}
\mathsf{d}_{n}(y,y') \ \ &\textrm{if} \ \ y,y'\in X_n, n\in \bar{\mathbb{N}}\\ 
\mathsf{d}_{Y_n}(\psi_n(y),\psi_n^{\infty}(y')) \ \ &\textrm{if} \ \ y\in X_{n}, \, y' \in X_{\infty} \\ 
\mathsf{d}_{Y_n}(\psi_n^{\infty}(y), \psi_n(y')) \ \ &\textrm{if} \  y \in X_{\infty}, \, y'\in X_{n}  \\
\inf_{x\in X_{\infty}}\mathsf{d}_{Y_n}(\psi_n(y), \psi_n^{\infty}(x))+\mathsf{d}_{Y_m}(\psi_m(y'), \psi_m^{\infty}(x)) \ \ &\textrm{if} \ \ y\in X_n,\,  y'\in  X_{m}.
\end{cases} 
\end{equation*}
We now consider the space $Y/\sim$ defined as the quotient of $Y$ with respect to the equivalence relation
\begin{equation}
y\sim y' \Leftrightarrow \mathsf{d}_Y(y,y')=0 \, ,
\end{equation}
and we then define the completion of this space, that we still denote by $(Y,\mathsf{d}_Y)$. It is easy to see that $Y$ is separable. By construction we notice that the set 
$$\psi_n(X_n)\cup \psi_n^{\infty}(X_{\infty})\subset Y_n$$ endowed with the distance $\mathsf{d}_{Y_n}$ is canonically isometrically embedded in $(Y,\mathsf{d}_Y)$, so that every space $X_n$, $n\in \bar{\mathbb{N}}$, is canonically isometrically embedded into $Y$ by a map $\psi_n'$. We claim now that $Y$ and $\psi_n'$ provide a realization of the weakly measured Gromov convergence. To see this, it is enough to notice that $(\psi_n')_{\sharp}\mu_n\to (\psi_{\infty}')_{\sharp}\mu_{\infty}$ weakly in $\mathscr{M}(Y)$ which is a consequence of the construction of $\psi_n'$, \eqref{eq: weak conv in Y_n} and the fact that $\Det$ induces the weak topology.\\
For the converse, let us suppose that $(X_n,\mathsf{d}_n,\mu_n)_{n\in \mathbb{N}}$ weakly measured Gromov converges to $(X_{\infty},\mathsf{d}_{\infty},\mu_{\infty})$. By definition we know that there exist a complete and separable metric space $(X,\mathsf{d})$ and isometric embeddings $\iota_n:X_n\to X$, $n\in \bar{\mathbb{N}}$, such that $(\iota_n)_{\sharp}\mu_n \to (\iota_{\infty})_{\sharp}\mu_{\infty}$ weakly in $\mathscr{M}(X)$. Since $\Det$ metrizes the weak convergence on $\mathscr{M}(X)$ we know that 
$$\Det((\iota_n)_{\sharp}\mu_n, (\iota_{\infty})_{\sharp}\mu_{\infty})\rightarrow 0 \qquad \textrm{as} \ n \to \infty,$$
and the result follows by the very definition of $\DET$, noticing that $(X,\mathsf{d})$ is a possible competitor.
\end{proof}

Let us denote by $\boldsymbol{\mathrm{X}}(K,N,L,v,V)$ the family of isomorphism classes of metric measure spaces $(X,\mathsf{d},\mu) \in \mathsf{CD}(K,N)$ such that 
$$\mathsf{diam}(X)\le L \quad \textrm{and} \quad 0<v\le \mu(X)\le V.$$ 
Let $\boldsymbol{\tilde{\mathrm{X}}}(K,N,L,v,V)$ be the family of isomorphism classes of spaces 
$$(X,\mathsf{d},\mu)\in\boldsymbol{\mathrm{X}}(K,N,L,v,V)$$
such that $\mu$ has full support.
\begin{theorem}\label{th: compact CD}
Fix $K\in \mathbb{R}$, $N\in (1,\infty)$, $L\in (0,\infty)$ and $0<v\le V<\infty$. Let $\Det$ be a regular Entropy-Transport distance. Then
\begin{itemize}
\item $\boldsymbol{\mathrm{X}}(K,N,L,v,V)$ is compact with respect to $\DET$. 
\item $\boldsymbol{\tilde{\mathrm{X}}}(K,N,L,v,V)$ is compact with respect to mGH. Moreover on such family the $\DET$-topology and the mGH-topology coincide.
\end{itemize}
\end{theorem}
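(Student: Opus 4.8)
The plan is to reduce everything to the precompactness theory of the $\mathsf{CD}(K,N)$ condition and then transfer it to $\DET$ via the identification of $\DET$-convergence with weak measured-Gromov convergence (Theorem~\ref{th: DET vs mG convergence}). So fix a sequence $(X_n,\di_n,\mu_n)$ in $\boldsymbol{\mathrm{X}}(K,N,L,v,V)$; since the $\mathsf{CD}(K,N)$ condition is insensitive to enlarging the space beyond the support, we may pass to the isomorphic representatives and assume $X_n=\mathsf{supp}(\mu_n)$. The Bishop--Gromov inequality for $\mathsf{CD}(K,N)$ spaces, combined with $\mathsf{diam}(X_n)\le L$ and $\mu_n(X_n)\le V$, produces a lower bound $\mu_n(B_r(x))\ge \delta(r)>0$ for every $x\in X_n$ and $0<r\le L$, with $\delta$ depending only on $K,N,L,v$. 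If $x_1,\dots,x_m\in X_n$ are $\varepsilon$-separated then the balls $B_{\varepsilon/2}(x_i)$ are disjoint, so $m\,\delta(\varepsilon/2)\le\mu_n(X_n)\le V$; hence the family $\{(X_n,\di_n)\}$ is uniformly totally bounded, and by Gromov's precompactness theorem some subsequence converges in the Gromov--Hausdorff sense to a compact space $(X_\infty,\di_\infty)$ with $\mathsf{diam}(X_\infty)\le L$.

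Realising this convergence as Hausdorff convergence of isometric copies $\iota_n(X_n)$ inside a single compact metric space $Z$, the measures $(\iota_n)_\sharp\mu_n$ have total mass at most $V$ and, $Z$ being compact, are equally tight; Prokhorov's Theorem~\ref{th: Prokhorov} then gives a further subsequence with $(\iota_n)_\sharp\mu_n\rightharpoonup\mu_\infty$ for some $\mu_\infty\in\mathscr{M}(Z)$, necessarily supported in $X_\infty$. This is a realisation of mGH convergence $(X_n,\di_n,\mu_n)\to(X_\infty,\di_\infty,\mu_\infty)$; testing the constant function $1\in C_b(Z)$ gives $\mu_n(X_n)\to\mu_\infty(X_\infty)$, so $v\le\mu_\infty(X_\infty)\le V$, while the stability of $\mathsf{CD}(K,N)$ under pmGH convergence recalled in the preliminaries yields $(X_\infty,\di_\infty,\mu_\infty)\in\mathsf{CD}(K,N)$. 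Thus the limit lies in $\boldsymbol{\mathrm{X}}(K,N,L,v,V)$. Since mGH convergence trivially realises weak measured-Gromov convergence, Theorem~\ref{th: DET vs mG convergence} gives $\DET((X_n,\di_n,\mu_n),(X_\infty,\di_\infty,\mu_\infty))\to0$ along the subsequence; as $(\boldsymbol{\mathrm{X}},\DET)$ is a metric space (Theorem~\ref{th main}), $\boldsymbol{\mathrm{X}}(K,N,L,v,V)$ is compact for $\DET$.

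For the second bullet I would run the same extraction on a sequence in $\boldsymbol{\tilde{\mathrm{X}}}(K,N,L,v,V)$ and check that the limit still has full support: given $x\in X_\infty$ pick $x_n\in X_n$ with $\iota_n(x_n)\to x$; then for every $r>0$ the uniform bound $\mu_n(B_{r/2}(x_n))\ge\delta(r/2)$ together with weak convergence gives $\mu_\infty(\overline{B_r(x)})\ge\delta(r/2)>0$, so $x\in\mathsf{supp}(\mu_\infty)$ and hence $X_\infty=\mathsf{supp}(\mu_\infty)$. Therefore $\boldsymbol{\tilde{\mathrm{X}}}(K,N,L,v,V)$ is sequentially compact for mGH, and the same argument shows it is $\DET$-closed in $\boldsymbol{\mathrm{X}}(K,N,L,v,V)$, hence $\DET$-compact. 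Finally, to see that the two topologies agree on this family: the inclusion ``mGH $\Rightarrow$ $\DET$'' is immediate from Theorem~\ref{th: DET vs mG convergence}; conversely, if $(X_n,\di_n,\mu_n)\to(X_\infty,\di_\infty,\mu_\infty)$ for $\DET$, then any subsequence has, by the mGH-compactness just established, a sub-subsequence mGH-converging to some element of $\boldsymbol{\tilde{\mathrm{X}}}(K,N,L,v,V)$, which is then also its $\DET$-limit; uniqueness of limits in the metric space $(\boldsymbol{\mathrm{X}},\DET)$ forces this element to be $(X_\infty,\di_\infty,\mu_\infty)$, and the subsequence principle gives mGH convergence of the whole sequence. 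Hence $\DET$-convergence and mGH-convergence coincide on $\boldsymbol{\tilde{\mathrm{X}}}(K,N,L,v,V)$, and since both are induced by metrics on this compact set the induced topologies coincide.

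The step I expect to be the main obstacle is the bookkeeping in passing between mGH convergence and the weak measured-Gromov convergence underlying $\DET$ — in particular, producing a single ambient space carrying isometric copies of all the $X_n$ and of $X_\infty$ on which the pushforwards converge weakly, and, for the full-support class, propagating the uniform Bishop--Gromov lower bound to the limit so that no support is lost. The remaining ingredients, namely the uniform total boundedness coming from $\mathsf{CD}(K,N)$ together with $\mathsf{diam}\le L$, and the stability of $\mathsf{CD}(K,N)$ under mGH limits, are standard and quoted from the literature.
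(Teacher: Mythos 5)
Your proof is correct, but it follows a genuinely different and much more self-contained route than the paper's. The paper settles both bullets in a few lines by citation: precompactness of $\boldsymbol{\mathrm{X}}(K,N,L,v,V)$ with respect to weak measured-Gromov convergence is \cite[Corollary 3.22]{GMS}, stability of $\mathsf{CD}(K,N)$ under that convergence is \cite[Theorem 3.1]{St2} (see also \cite[Theorem 4.9]{GMS}), and the coincidence of weak measured-Gromov and mGH convergence on the full-support class is obtained by observing that such spaces are uniformly doubling, so that \cite[Theorems 3.30 and 3.33]{GMS} apply; everything is then transported to $\DET$ via Theorem \ref{th: DET vs mG convergence}, exactly as you do. You instead re-derive the precompactness by hand (Bishop--Gromov plus a packing argument, Gromov precompactness, realization of the Gromov--Hausdorff convergence inside one compact ambient space, Prokhorov), and you replace the ``uniformly doubling'' citation by explicitly propagating the uniform lower bound $\mu_n(B_r(x_n))\ge \delta(r)$ to the limit through the Portmanteau inequality, which is precisely the non-collapsing mechanism hidden in the GMS results; your final sub-subsequence argument for the equality of the two topologies is also sound, granted the standard (and implicitly used) facts that GH convergence of compacta can be realized in a common compact space and that mGH convergence on this class is metrizable. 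Two small points to tidy: the ball-volume lower bound comes from Bishop--Gromov together with $\mu_n(X_n)\ge v$ (the upper bound $V$ enters only in the packing estimate $m\,\delta(\varepsilon/2)\le V$), so the sentence attributing it to $\mu_n(X_n)\le V$ should be corrected, even though your stated dependence of $\delta$ on $v$ is right; and when invoking the $\mathsf{CD}(K,N)$ stability recalled in the preliminaries you should note that the measures are not normalized, which is harmless because the total masses converge to a positive limit and $\mathsf{CD}(K,N)$ is invariant under scaling of the reference measure. In short, your approach buys transparency and independence from the finer GMS machinery at the cost of length, while the paper's buys brevity and gets the topology coincidence on $\boldsymbol{\tilde{\mathrm{X}}}(K,N,L,v,V)$ directly from the doubling criterion rather than through a compactness-and-uniqueness extraction.
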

\begin{proof}
By \cite[Corollary 3.22]{GMS} we have precompactness of $\boldsymbol{\mathrm{X}}(K,N,L,v,V)$ with respect to the weakly measured Gromov convergence and thus precompactness with respect also to the $\DET$-convergence by Theorem \ref{th: DET vs mG convergence}. From \cite[Theorem 3.1]{St2} (see also \cite[Theorem 4.9]{GMS}) we know that the condition $\mathsf{CD}(K,N)$ is stable with respect to the weakly measured Gromov convergence and thus the first statement follows.
For the second statement we observe that the spaces in $\boldsymbol{\tilde{\mathrm{X}}}(K,N,L,v,V)$ are uniformly doubling and thus the weakly measured Gromov convergence is equivalent to the mGH-convergence (see \cite[Theorem 3.30 and 3.33]{GMS}).
\end{proof}

\begin{corollary}
It follows:
\begin{enumerate}[(i)]
\item The stability of $\mathsf{CD}(K,N)$ with $N\in (1,\infty]$ under $\DET$-convergence.
\item The convergence of heat flows under $\DET$-convergence of $\mathsf{CD}(K,\infty)$ spaces.
\item The stability of $\mathsf{RCD}(K,N)$ with $N\in (1,\infty]$ under $\DET$-convergence.
\item The stability of the spectrum of the Laplacian under $\DET$-convergence of $\mathsf{CD}(K,\infty)$ spaces.   
\end{enumerate}
\end{corollary}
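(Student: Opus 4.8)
The plan is to reduce each of the four assertions to the corresponding known stability statement for the weak measured-Gromov convergence, and then to transfer it to $\DET$ by means of Theorem~\ref{th: DET vs mG convergence}. Concretely, if $(X_n,\di_n,\mu_n)$ converges to $(X_\infty,\di_\infty,\mu_\infty)$ with respect to $\DET$, then by Theorem~\ref{th: DET vs mG convergence} the sequence weakly measured-Gromov converges to its limit, so there are a complete and separable metric space $(X,\di)$ and isometric embeddings $\iota_n\colon X_n\to X$, $n\in\bar{\mathbb N}$, with $(\iota_n)_\sharp\mu_n\rightharpoonup(\iota_\infty)_\sharp\mu_\infty$ weakly in $\mathscr{M}(X)$. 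Working in this common ambient space, items (i)--(iv) become instances of the stability theory of \cite{GMS}, which builds on \cite{St1,St2,AGS,AGMR}.

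For (i), when $N<\infty$ the stability of $\mathsf{CD}(K,N)$ under weak measured-Gromov convergence is \cite[Theorem 3.1]{St2} (see also \cite[Theorem 4.9]{GMS}), exactly as already used in the proof of Theorem~\ref{th: compact CD}; the case $N=\infty$ is the analogous statement in \cite{GMS} (compare \cite{AGS}). In either case one gets $(X_\infty,\di_\infty,\mu_\infty)\in\mathsf{CD}(K,N)$. For (iii) I would combine (i) with the stability of infinitesimal Hilbertianity: along the realisation above the Cheeger energies $\mathsf{Ch}_n$ Mosco-converge to $\mathsf{Ch}_\infty$ in $L^2$ (see (ii) below), and since a Mosco limit of quadratic forms is again quadratic, $\mathsf{Ch}_\infty$ is quadratic whenever every $\mathsf{Ch}_n$ is; hence $\mathsf{RCD}(K,N)$ passes to the limit, cf. \cite{GMS,AGMR}.

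For (ii), the Mosco-convergence theorem for Cheeger energies of $\mathsf{CD}(K,\infty)$ spaces \cite{GMS} gives $\mathsf{Ch}_n\to\mathsf{Ch}_\infty$ in the Mosco sense along the common realisation; by the general stability of gradient flows of Mosco-converging functionals, together with the uniform $K$-convexity provided by the $\mathsf{CD}(K,\infty)$ condition, this yields the $L^2$-convergence of the associated heat semigroups, i.e. $\mathsf{h}_{t,n}f_n\to\mathsf{h}_{t,\infty}f_\infty$ in $L^2(X)$ whenever $f_n\to f_\infty$ in $L^2(X)$, locally uniformly in $t$. Finally (iv) follows from this Mosco convergence by the standard spectral argument: Mosco convergence of the (bilinear, in the infinitesimally Hilbertian case) Dirichlet forms associated with the Cheeger energies implies strong resolvent convergence of the corresponding Laplacians, hence---on the sub-classes where the spectrum is discrete, e.g. under a spectral gap or a diameter bound---convergence of eigenvalues and of spectral projectors; see \cite{GMS}.

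The one point requiring care is the very first reduction: one has to check that the weak measured-Gromov convergence used here---which only requires weak convergence of the pushforward measures in some common complete and separable metric space, with no built-in tightness of the whole sequence---coincides with the ``measured Gromov'' convergence under which the stability results of \cite{GMS,St2} are stated; this is indeed the case, being the non-pointed specialisation of \cite[Definition 3.9]{GMS}, and it is exactly the identification already exploited in the proof of Theorem~\ref{th: compact CD}. I therefore expect no serious obstacle beyond this bookkeeping: the genuinely new input, namely the equivalence between $\DET$-convergence and weak measured-Gromov convergence, has already been supplied by Theorem~\ref{th: DET vs mG convergence}.
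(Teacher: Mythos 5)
Your proposal is correct and follows essentially the same route as the paper: reduce $\DET$-convergence to weak measured-Gromov convergence via Theorem \ref{th: DET vs mG convergence} and then invoke the stability results of \cite{GMS} (with \cite{St2,AGS,AGMR} in the background) for each of the four items. The extra detail you supply on Mosco convergence of Cheeger energies and the spectral argument is an expansion of what the cited theorems of \cite{GMS} already contain, not a different argument.
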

\begin{proof}
The proof is a direct consequence of Theorem \ref{th: DET vs mG convergence} and the results contained in \cite{GMS}, to which we refer for the precise statements. In particular, for $(i)$ we use \cite[Theorem I and pp. 29-30]{GMS}, $(ii)$ follows from \cite[Theorem 5.7]{GMS}, for $(iii)$ we take advantage of \cite[Theorem IV]{GMS}, and $(iv)$ is a consequence of \cite[Theorem V]{GMS}.
\end{proof}

\section{Limiting cases}
\subsection{Pure entropy distances}

In the setting of Entropy-Transport problems, we call \emph{pure entropy} problems the ones induced by the choices
$$F\in \Gamma(\mathbb{R}_+), \qquad \boldsymbol{\mathrm c}(x_1,x_2)=\begin{cases}0 \ &\textrm{if} \ x_1=x_2, \\ +\infty &\textrm{otherwise.} \end{cases}$$

In this situation one can prove (see \cite[Example E.5]{LMS}) that for any $ \mu_1,\mu_2\in \Mi(X)$ we have
\begin{equation}\label{eq: PE problem}
\et(\mu_1,\mu_2)=\inf_{\gamma\in \Mi(X)}D_F(\gamma||\mu_1)+D_F(\gamma||\mu_2)=\int_{X}H_0\left(\frac{\mathrm{d}\mu_1}{\mathrm{d}\lambda},\frac{\mathrm{d}\mu_2}{\mathrm{d}\lambda}\right)\dd\lambda,
\end{equation}
where $\lambda\in \Mi(X)$ is any dominating measure of $\mu_1$ and $\mu_2$ and $H_0$ is defined as the lower semicontinuous envelope of the function 
\begin{equation}
\tilde{H}_0(r,t):=\inf_{\theta>0} \hat{F}(\theta,r)+\hat{F}(\theta,t).
\end{equation}
In particular, the functional $\et$ corresponds in this situation to the Csisz\'ar's divergence induced by the function $s\mapsto H_0(1,s)\in \Gamma_0(\mathbb{R}_+)$ (see \cite[Lemma 3]{DepIeee}), justifying the name of pure entropy problem.

For some entropy functions $F$ one can prove that a power $a$ of the induced pure entropy cost $\et$ is a distance. For instance, when $a=1$ and $F(s)=|s-1|$ we obtain the celebrated total variation (denoted by $\mathsf{TV}$ in the sequel), a distance in the space of measures inducing a strong topology. 
Actually, thanks to the result proved in \cite[Lemma 8]{DepIeee} and the explicit bounds contained in \cite[Theorem 2.5]{LS}, we know that every pure entropy distance induces the same topology of the total variation.    

As shown in \cite[Propositions 2, 3]{DepIeee}, we obtain another class of pure entropy distances by choosing $a=1/2$ and the power-like entropy $F=U_p$, $p\ge 1$, defined in example \ref{def:power-like}. In this situation we have 
\begin{align}
&H_0(r,t)=\frac{1}{p}\Big[r+t-2^{\frac{p}{p-1}}(r^{1-p}+t^{1-p})^{\frac{1}{1-p}}\Big] \qquad \textrm{if} \ p>1,\\
&H_0(r,t)=(\sqrt{r}-\sqrt{t})^2, \qquad p=1,
\end{align}
and we recognize some well-known functionals like the $2$-Hellinger distance (case $p=1$) and the triangular discrimination (case $p=2$). We will denote these distances by $\mathsf{PL}_{p}$.

We start with a useful lemma, valid for any pure entropy problem.
\begin{lemma}\label{lem: explicit formulation pure entropy}
Fix $a\in(0,1]$ and let us consider the functions
$$F\in \Gamma(\mathbb{R}_+) \ \textrm{such that} \ F^{\prime}_{\infty}=+\infty, \qquad \boldsymbol{\mathrm c}(x_1,x_2)=\begin{cases}0 \ &\textrm{if} \ x_1=x_2, \\ +\infty &\textrm{otherwise}. \end{cases}$$
Let us denote by $\mathsf{PE}$ the power $a$ of the Entropy-Transport cost induced by $F$ and $\boldsymbol{\mathrm c}$.
\\For any $(X_1,\mathsf{d}_1,\mu_1),(X_2,\mathsf{d}_2,\mu_2)\in \boldsymbol{\mathrm{X}}$ let us define
\begin{equation}\label{eq: def sturm pure entropy}
\mathbf{PE}\left((X_1,\mathsf{d}_1,\mu_1),(X_2,\mathsf{d}_2,\mu_2)\right):=\inf \mathsf{PE}(\psi^1_{\sharp}\mu_1,\psi^2_{\sharp}\mu_2),
\end{equation}
where the infimum in the right hand side is taken over all complete and separable metric spaces $(\hat{X},\hat{\mathsf{d}})$ with isometric embeddings $\psi^1:\mathsf{supp}(\mu_1)\rightarrow \hat{X}$ and $\psi^2:\mathsf{supp}(\mu_2)\rightarrow \hat{X}$.
\\Then 
\begin{equation}\label{eq: explicit formulation PE}
\mathbf{PE}^{1/a}\left((X_1,\mathsf{d}_1,\mu_1),(X_2,\mathsf{d}_2,\mu_2)\right)=\inf_C\left\{D_F(\gamma_1||\mu_1)+D_F(\gamma_2||\mu_2)\right\}  ,
\end{equation}
where 
\begin{equation*}
C:=\left\{(\boldsymbol{\gamma},\hat{\di}): \boldsymbol{\gamma}\in \Mi(X_1\times X_2),\, \hat{\di} \ \textrm{pseudo-metric coupling for} \ \di_1,\di_2, \ \mathsf{supp}(\boldsymbol{\gamma})\subset \{\hat{\di}=0\} \right\}.
\end{equation*}
\end{lemma}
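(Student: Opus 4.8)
My plan is to establish \eqref{eq: explicit formulation PE} by proving both inequalities between $\mathbf{PE}^{1/a}((X_1,\di_1,\mu_1),(X_2,\di_2,\mu_2))$ and $B:=\inf_C\{D_F(\gamma_1||\mu_1)+D_F(\gamma_2||\mu_2)\}$, following the scheme of the proof of Proposition \ref{pr: formulazione D-LET} but with the transport integral $\int\ell(\hat{\di})\,\dd\boldsymbol{\gamma}$ replaced by the singular cost $\boldsymbol{\mathrm c}$ (equal to $0$ on the diagonal, $+\infty$ elsewhere). Two facts drive the argument: (a) by \eqref{eq: PE problem}, on any Polish space $\hat{X}$ one has $\mathsf{PE}^{1/a}(\nu_1,\nu_2)=\et_{\hat{X}}(\nu_1,\nu_2)=\inf_{\gamma\in\Mi(\hat{X})}\big(D_F(\gamma||\nu_1)+D_F(\gamma||\nu_2)\big)$, so a \emph{single} measure on $\hat{X}$ is always an admissible competitor; and (b) a transport plan of finite Entropy-Transport cost for $\boldsymbol{\mathrm c}$ is concentrated on the diagonal, which in the disjoint-union picture of Proposition \ref{pr: formulazione D-LET} translates exactly into the constraint $\mathsf{supp}(\boldsymbol{\gamma})\subset\{\hat{\di}=0\}$ defining $C$. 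I would also note at the outset that a competitor $(\boldsymbol{\gamma},\hat{\di})\in C$ with $\boldsymbol{\gamma}\neq 0$ automatically has $\hat{\di}$ finite-valued (by the triangle inequality through a point of $\mathsf{supp}(\boldsymbol{\gamma})$), while for $\boldsymbol{\gamma}=0$ the objective does not involve $\hat{\di}$; hence one may always assume $\hat{\di}$ finite-valued.

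For the inequality $\mathbf{PE}^{1/a}\le B$, I would fix $(\boldsymbol{\gamma},\hat{\di})\in C$ with $D_F(\gamma_1||\mu_1)+D_F(\gamma_2||\mu_2)<\infty$ and let $(\tilde{X},\tilde{\di})=(X_1\sqcup X_2)/\sim$ be the complete separable metric space of Lemma \ref{lem: quotient disjoint union is metric}, with quotient maps $p_i:X_i\to\tilde{X}$. Since the $p_i$ are isometric embeddings, $\tilde{X}$ together with $p_1,p_2$ restricted to $\mathsf{supp}(\mu_1),\mathsf{supp}(\mu_2)$ is an admissible competitor in \eqref{eq: def sturm pure entropy}. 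The support condition gives $p_1(x_1)=p_2(x_2)$ for $\boldsymbol{\gamma}$-a.e.\ $(x_1,x_2)$, so $\gamma:=(p_1)_\sharp\gamma_1=(p_2)_\sharp\gamma_2\in\Mi(\tilde{X})$ is well defined; each $p_i$ is Borel and injective (because $\di_i$ is a metric), hence Lemma \ref{lem: invariance divergence under injection} gives $D_F(\gamma||(p_i)_\sharp\mu_i)=D_F(\gamma_i||\mu_i)$. Using $\gamma$ as a competitor in fact (a) on $\tilde{X}$ and the definition of $\mathbf{PE}$ then yields $\mathbf{PE}^{1/a}\le\mathsf{PE}^{1/a}((p_1)_\sharp\mu_1,(p_2)_\sharp\mu_2)\le D_F(\gamma_1||\mu_1)+D_F(\gamma_2||\mu_2)$, and taking the infimum over $C$ gives $\mathbf{PE}^{1/a}\le B$ (the case $\boldsymbol{\gamma}=0$ being handled by the competitor $\gamma=0$ in (a)).

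For the reverse inequality $\mathbf{PE}^{1/a}\ge B$, I would take any complete separable metric space $(\hat{X},\hat{\di}_{\hat{X}})$ with isometric embeddings $\psi^i:\mathsf{supp}(\mu_i)\to\hat{X}$ and assume $\et_{\hat{X}}(\psi^1_\sharp\mu_1,\psi^2_\sharp\mu_2)<\infty$ (otherwise there is nothing to prove). By (a), for each $\epsilon>0$ there is $\gamma\in\Mi(\hat{X})$ with $D_F(\gamma||\psi^1_\sharp\mu_1)+D_F(\gamma||\psi^2_\sharp\mu_2)<\et_{\hat{X}}(\psi^1_\sharp\mu_1,\psi^2_\sharp\mu_2)+\epsilon$; since $F'_\infty=+\infty$, finiteness of the divergences forces $\gamma\ll\psi^i_\sharp\mu_i$, so $\gamma$ is concentrated on the closed set $S:=\psi^1(\mathsf{supp}(\mu_1))\cap\psi^2(\mathsf{supp}(\mu_2))$. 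I would then set $\boldsymbol{\gamma}:=\Phi_\sharp(\gamma\llcorner S)\in\Mi(X_1\times X_2)$ with the Borel injective map $\Phi(\hat{x}):=((\psi^1)^{-1}(\hat{x}),(\psi^2)^{-1}(\hat{x}))$, so that $\gamma_i=(\psi^i)^{-1}_\sharp\gamma$ and, by Lemma \ref{lem: invariance divergence under injection}, $D_F(\gamma_i||\mu_i)=D_F(\gamma||\psi^i_\sharp\mu_i)$; and I would define $\hat{\di}$ on $X_1\sqcup X_2$ as $\di_i$ on each $X_i\times X_i$ and, for $x\in X_1,\ y\in X_2$,
\begin{equation*}
\hat{\di}(x,y):=\inf_{\substack{y_1\in\mathsf{supp}(\mu_1)\\ y_2\in\mathsf{supp}(\mu_2)}}\di_1(x,y_1)+\hat{\di}_{\hat{X}}\big(\psi^1(y_1),\psi^2(y_2)\big)+\di_2(y_2,y)
\end{equation*}
(and symmetrically on $X_2\times X_1$), which, exactly as in Proposition \ref{pr: formulazione D-LET}, is a finite-valued pseudo-metric coupling of $\di_1$ and $\di_2$. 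Choosing $y_1=(\psi^1)^{-1}(\hat{x})$ and $y_2=(\psi^2)^{-1}(\hat{x})$ shows $\hat{\di}(\Phi(\hat{x}))=0$ for every $\hat{x}\in S$, so $\boldsymbol{\gamma}$ is concentrated on $\Phi(S)\subset\{\hat{\di}=0\}$; since $\hat{\di}$ is $1$-Lipschitz with respect to $\di_1+\di_2$, the set $\{\hat{\di}=0\}$ is closed, whence $\mathsf{supp}(\boldsymbol{\gamma})\subset\overline{\Phi(S)}\subset\{\hat{\di}=0\}$, i.e.\ $(\boldsymbol{\gamma},\hat{\di})\in C$. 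Therefore $B\le D_F(\gamma_1||\mu_1)+D_F(\gamma_2||\mu_2)<\et_{\hat{X}}(\psi^1_\sharp\mu_1,\psi^2_\sharp\mu_2)+\epsilon$, and letting $\epsilon\to 0$ and then taking the infimum over all admissible ambient spaces gives $B\le\mathbf{PE}^{1/a}$.

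The verifications that $\hat{\di}$ above is a pseudo-metric coupling and that $\Phi$ and the maps $(\psi^i)^{-1}$ are Borel are routine and identical to the corresponding points in the proof of Proposition \ref{pr: formulazione D-LET}. The step I expect to require the most care is the correct handling of the singular constraint, i.e.\ passing from ``finite cost'' to the \emph{support} statement $\mathsf{supp}(\boldsymbol{\gamma})\subset\{\hat{\di}=0\}$ rather than to a merely $\boldsymbol{\gamma}$-almost everywhere one: in the direction $\mathbf{PE}^{1/a}\ge B$ this is recovered from $\hat{\di}\circ\Phi\equiv 0$ on $S$ together with continuity of $\hat{\di}$. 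This is also the point where superlinearity of $F$ is indispensable, as it is what confines the near-optimal $\gamma$ to the set $S$ on which the pull-back $\Phi$ is defined.
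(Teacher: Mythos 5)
Your proposal is correct and follows essentially the same route as the paper: one inequality via the quotient space of Lemma \ref{lem: quotient disjoint union is metric} together with the invariance Lemma \ref{lem: invariance divergence under injection}, the other via the pull-back construction and explicit pseudo-metric coupling from the first part of the proof of Proposition \ref{pr: formulazione D-LET}. The only (harmless) variation is that you channel both directions through the single-measure formula \eqref{eq: PE problem} and use superlinearity to localize the near-optimal measure on $\psi^1(\mathsf{supp}(\mu_1))\cap\psi^2(\mathsf{supp}(\mu_2))$, whereas the paper argues at the level of plans and exploits the $0/+\infty$ dichotomy of the singular transport term to encode the constraint $\mathsf{supp}(\boldsymbol{\gamma})\subset\{\hat{\di}=0\}$.
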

\begin{proof}
Setting
$$\ell(d):=\begin{cases} 0 &\textrm{if} \ d=0\\
+\infty &\textrm{otherwise},
\end{cases}$$
we can prove that the infimum of 
$$\left\{D_F(\gamma_1||\mu_1)+D_F(\gamma_2||\mu_2)+\int_{X_1\times X_2}\ell\big(\hat{\mathsf{d}}(x,y)\big)\dd\boldsymbol{\gamma}\right\}^a  $$
over the set
$$\tilde{C}:=\{(\boldsymbol{\gamma},\hat{\di}): \boldsymbol{\gamma}\in \Mi(X_1\times X_2),\, \hat{\di} \ \textrm{pseudo-metric coupling for} \ \di_1,\di_2\}$$
is less or equal to the infimum in the right hand side of \eqref{eq: def sturm pure entropy} by reasoning as in the first part of the proof of Proposition \ref{pr: formulazione D-LET}. The fact that the power $a$ of the right hand side of \eqref{eq: explicit formulation PE} is less or equal to the infimum as in \eqref{eq: def sturm pure entropy} follows by noticing that
\begin{equation}
\int_{X_1\times X_2}\ell\big(\hat{\mathsf{d}}(x,y)\big)\dd\boldsymbol{\gamma}=
\begin{cases}
+\infty \qquad &\textrm{if} \ \mathsf{supp}(\boldsymbol{\gamma})\not\subset \{\hat{\di}=0\}\\
0 &\textrm{otherwise}.
\end{cases}
\end{equation}

For the converse inequality,  we reason in a similar way as in the proof of the point (\ref{esistenza metrico ottimo}) of Lemma \ref{lem:esistenza ottimo}. For any pseudo-metric coupling $\hat{\di}$ of $\di_1$ and $\di_2$ let us consider the space $((X_1\sqcup X_2)/\sim,\hat{\di})$, where $x_1\sim x_2 \Longleftrightarrow \hat{\di}(x_1,x_2)=0.$
It is a complete and separable metric space as proved in Lemma \ref{lem: quotient disjoint union is metric}.
Denoting by $q: X_1\sqcup X_2\rightarrow (X_1\sqcup X_2)/\sim$ the projection to the quotient and using the identification
$$X_1\sqcup X_2=X_1\times\{0\}\cup X_2\times \{1\},$$
we notice that
$$X_1\times X_2 \hookrightarrow \big((X_1\sqcup X_2)/\sim\big) \times \big((X_1\sqcup X_2)/\sim\big)$$
via the injective map 
$$\boldsymbol{\psi}(x_1,x_2)=(\psi^1(x_1),\psi^2(x_2)):=(q(x_1,0),q(x_2,1)).$$
Moreover, we also have that $\psi^i$ is an isomorphism of $(X_i,\di_i)$ into its image in $((X_1\sqcup X_2)/\sim,\hat{\di})$, $i=1,2$. Thus, for any measure $\boldsymbol{\gamma}\in \Mi(X_1\times X_2)$ such that $\mathsf{supp}(\boldsymbol{\gamma})\subset \{\hat{\di}=0\}$, denoting by $\gamma_i$ the marginals of $\boldsymbol{\gamma}$, we can consider the measures $\boldsymbol{\psi}_{\sharp}\boldsymbol{\gamma}$ whose projections are $(\psi^1)_{\sharp}\gamma_1$ and $(\psi^2)_{\sharp}\gamma_2$. Using Lemma \ref{lem: invariance divergence under injection} we know that
$$D_F(\gamma_i||\mu_i)=D_F((\psi^i)_{\sharp}\gamma_i||(\psi^i)_{\sharp}\mu_i), \qquad i=1,2$$
and the proof is completed by noticing that $\mathsf{supp}(\boldsymbol{\psi}_{\sharp}\boldsymbol{\gamma})$ is contained in the diagonal of the metric space $((X_1\sqcup X_2)/\sim,\hat{\di})$.
\end{proof}

In the next theorem we prove that some pure entropy problems, specifically the ones generated by power-like entropies $U_p$, $p\ge 1$, can be recovered as a limiting case of regular Entropy-Transport problems.

\begin{theorem}\label{th: limit entropy distances}
Fix $p\ge 1$ and let us consider the sequence of cost functions $\boldsymbol{\mathrm c}_n=n\di$ and the entropy function $F:=U_p$. 
Let us denote by $\mathsf{D}_{p,n}$ the Entropy-Transport distance induced by $a=1/2$, $\boldsymbol{\mathrm c}_n=n\mathsf{d}$ and $F:=U_p$. 

Then, for every metric measure spaces $(X_1,\mathsf{d}_1,\mu_1)$, $(X_2,\mathsf{d}_2,\mu_2)\in \boldsymbol{\mathrm{X}}$ the limit
\begin{equation}\label{eq: limit entropy distances}
\mathbf{PL}_p\left((X_1,\mathsf{d}_1,\mu_1),(X_2,\mathsf{d}_2,\mu_2)\right):=\lim_{n \to \infty}\mathbf{D}_{p,n}\left((X_1,\mathsf{d}_1,\mu_1),(X_2,\mathsf{d}_2,\mu_2)\right) \quad \textrm{is well defined},
\end{equation}
where $\mathbf{D}_{p,n}$ denotes the function defined as in Definition \ref{def: D-ET} upon replacing $\Det$ by $\mathsf{D}_{p,n}$.

Moreover, $\mathsf{D}_{p,n}$ is a regular Entropy-Transport distance and $\mathbf{PL}_p$ defines a metric on $\boldsymbol{\mathrm{X}}$ such that
\begin{equation}\label{eq: conv to entropy distances}
\mathbf{PL}_p\left((X_1,\mathsf{d}_1,\mu_1),(X_2,\mathsf{d}_2,\mu_2)\right)=\inf \mathsf{PL}_{p}(\psi^1_{\sharp}\mu_1,\psi^2_{\sharp}\mu_2),
\end{equation}
where the infimum in the right hand side is taken over all complete and separable metric spaces $(\hat{X},\hat{\mathsf{d}})$ with isometric embeddings $\psi^1:\mathsf{supp}(\mu_1)\rightarrow \hat{X}$ and $\psi^2:\mathsf{supp}(\mu_2)\rightarrow \hat{X}$.
\end{theorem}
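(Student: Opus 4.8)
The plan is to prove, in turn: (a) each $\mathsf{D}_{p,n}$ is a regular Entropy-Transport distance; (b) $n\mapsto\mathbf{D}_{p,n}$ is monotone and bounded, so the limit $\mathbf{PL}_p$ exists; (c) $\mathbf{PL}_p$ equals $\mathbf{PE}:=\inf\mathsf{PL}_p(\psi^1_\sharp\mu_1,\psi^2_\sharp\mu_2)$, the infimum over isometric embeddings into complete separable metric spaces, which is exactly \eqref{eq: conv to entropy distances}; (d) $\mathbf{PE}$ is a metric on $\boldsymbol{\mathrm{X}}$. For (a), note that in Definition \ref{def: Entropy-Transport distance} one has $a=1/2$, $F=U_p$ (superlinear and finite valued) and $\ell_n(d)=nd$ (continuous, convex, zero only at $0$), so only the completeness, separability and weak-metrization of $\mathsf{D}_{p,n}$ on $\mathscr{M}(X)$ need an argument; I would obtain these by rescaling, observing that $\ell_n(\di(x,y))=\ell_1\big(n\,\di(x,y)\big)$, so that the Entropy-Transport cost for $(U_p,\ell_n)$ on $(X,\di)$ coincides with the one for $(U_p,\ell_1)$ on the complete separable space $(X,n\di)$, whose space of measures and weak topology agree with those of $(X,\di)$; the desired properties then follow from the fact that the linear power-like triple $(1/2,U_p,\ell_1)$ is a regular Entropy-Transport distance (Example \ref{def:linear power-like}). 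As a consequence Proposition \ref{pr: formulazione D-LET}, Lemma \ref{lem:esistenza ottimo} and Theorem \ref{th main} become available for $\mathbf{D}_{p,n}$. For (b), since $n\,\di\le(n+1)\,\di$ the Entropy-Transport functionals, hence $\mathsf{D}_{p,n}$ and — via Proposition \ref{pr: formulazione D-LET} — also $\mathbf{D}_{p,n}$, are nondecreasing in $n$; and testing the formula of Proposition \ref{pr: formulazione D-LET} with the null coupling and any finite pseudo-metric coupling yields $\mathbf{D}_{p,n}^2\le D_F(0||\mu_1)+D_F(0||\mu_2)=U_p(0)\big(\mu_1(X_1)+\mu_2(X_2)\big)=:C_0<\infty$ for every $n$, so the monotone bounded sequence $\mathbf{D}_{p,n}$ converges, proving (b) and also $\mathbf{PL}_p\le C_0^{1/2}$.

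For (c) there are two inequalities. The inequality $\mathbf{PL}_p\le\mathbf{PE}$ is immediate: on any ambient space the cost $n\,\hat{\di}$ is pointwise dominated by the pure-entropy cost ($0$ on the diagonal, $+\infty$ off it), so $\mathsf{D}_{p,n}\le\mathsf{PL}_p$ on $\mathscr{M}(\hat X)$, and taking infima over ambient spaces gives $\mathbf{D}_{p,n}\le\mathbf{PE}$ for all $n$. The reverse inequality $\mathbf{PL}_p\ge\mathbf{PE}$, which I expect to be the crux, I would prove by a compactness argument. Using Lemma \ref{lem:esistenza ottimo}(i) for $\mathsf{D}_{p,n}$, pick optimal pairs $(\boldsymbol{\gamma}_n,\hat{\di}_n)$ with $\mathbf{D}_{p,n}^2=\sum_{i=1}^2 D_F((\gamma_n)_i||\mu_i)+n\int_{X_1\times X_2}\hat{\di}_n\,\dd\boldsymbol{\gamma}_n$. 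Both summands are $\le C_0$, so the entropy terms are uniformly bounded and $\int\hat{\di}_n\,\dd\boldsymbol{\gamma}_n\le C_0/n\to0$; by Lemma \ref{lem: compattezza sottolivelli entropia} the marginals $(\gamma_n)_i$, hence $\boldsymbol{\gamma}_n$, are bounded and equally tight, so by Theorem \ref{th: Prokhorov} along a subsequence $\boldsymbol{\gamma}_n\rightharpoonup\boldsymbol{\gamma}$ and $(\gamma_n)_i\rightharpoonup\gamma_i$. The key step is that $\hat{\di}_n\to0$ on $\mathsf{supp}(\boldsymbol{\gamma})$: for $(\bar x,\bar y)\in\mathsf{supp}(\boldsymbol{\gamma})$ and $r>0$ one has $\liminf_n\boldsymbol{\gamma}_n(B_r(\bar x)\times B_r(\bar y))>0$ and $\hat{\di}_n\ge\hat{\di}_n(\bar x,\bar y)-2r$ on that ball, so $C_0/n\ge\int\hat{\di}_n\,\dd\boldsymbol{\gamma}_n$ forces $\limsup_n\hat{\di}_n(\bar x,\bar y)\le2r$, hence $\to0$. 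Therefore, for $(a,b),(a',b')\in\mathsf{supp}(\boldsymbol{\gamma})$, the triangle inequality gives $|\di_1(a,a')-\di_2(b,b')|=|\hat{\di}_n(a,a')-\hat{\di}_n(b,b')|\le\hat{\di}_n(a,b)+\hat{\di}_n(a',b')\to0$, whence $\di_1(a,a')=\di_2(b,b')$; this forces $\mathsf{supp}(\boldsymbol{\gamma})$ to be the graph of an isometry $\phi$ between $A_i:=\pi^i(\mathsf{supp}\boldsymbol{\gamma})$ (the case $\boldsymbol{\gamma}=0$ being trivial), and gluing $X_1$ and $X_2$ along $\phi$ — that is, setting $\hat{\di}(x,y):=\inf_{a\in A_1}\big(\di_1(x,a)+\di_2(\phi(a),y)\big)$ for $x\in X_1,\,y\in X_2$ — produces a finite pseudo-metric coupling with $\mathsf{supp}(\boldsymbol{\gamma})\subset\{\hat{\di}=0\}$. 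Using the joint lower semicontinuity of $D_F$ (Lemma \ref{lem: proprieta funzionale entropia}) I then conclude $\mathbf{PL}_p^2=\lim_n\mathbf{D}_{p,n}^2\ge\liminf_n\sum_i D_F((\gamma_n)_i||\mu_i)\ge\sum_i D_F(\gamma_i||\mu_i)\ge\mathbf{PE}^2$, the last inequality by Lemma \ref{lem: explicit formulation pure entropy}; this establishes $\mathbf{PL}_p=\mathbf{PE}$, and in addition shows that $(\boldsymbol{\gamma},\hat{\di})$ realizes the infimum defining $\mathbf{PE}^2$.

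Finally, for (d): $\mathbf{PL}_p=\sup_n\mathbf{D}_{p,n}$ is symmetric, finite ($\le C_0^{1/2}$), vanishes on the diagonal, and satisfies the triangle inequality since each $\mathbf{D}_{p,n}$ does (Theorem \ref{th main}); for non-degeneracy, if $\mathbf{PL}_p((X_1,\di_1,\mu_1),(X_2,\di_2,\mu_2))=0$ then the optimal pair $(\boldsymbol{\gamma},\hat{\di})$ produced above has $D_F(\gamma_i||\mu_i)=0$, hence $\gamma_i=\mu_i$ (since $U_p$ is superlinear and strictly convex with its unique zero at $1$), and $\mathsf{supp}(\boldsymbol{\gamma})$ is the graph of an isometry $\phi$ with $\phi_\sharp\mu_1=\mu_2$; extending $\phi$ by continuity to $\mathsf{supp}(\mu_1)$ yields an isometry $\mathsf{supp}(\mu_1)\to\mathsf{supp}(\mu_2)$ pushing $\mu_1$ to $\mu_2$, so the two spaces are isomorphic. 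The main obstacle is precisely the inequality $\mathbf{PL}_p\ge\mathbf{PE}$: the couplings $\hat{\di}_n$ live on spaces that vary with $n$ and may be unbounded away from $\mathsf{supp}(\boldsymbol{\gamma})$, so one cannot pass them to a limit metric directly; the resolution is the rigidity estimate $\hat{\di}_n\to0$ on $\mathsf{supp}(\boldsymbol{\gamma})$, which identifies the support as an isometry graph and lets one glue a single limiting pseudo-metric coupling by hand.
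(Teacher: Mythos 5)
Your proposal is correct and follows essentially the same strategy as the paper: monotonicity of the costs $n\di$ gives the limit, regularity of $\mathsf{D}_{p,n}$ comes from $n\di$ being itself a complete separable metric, the inequality ``$\le$'' in \eqref{eq: conv to entropy distances} follows from cost comparison, and the inequality ``$\ge$'' is obtained from the optimal pairs $(\boldsymbol{\gamma}_n,\hat{\di}_n)$ of Lemma \ref{lem:esistenza ottimo}, weak compactness of the plans, the observation that the factor $n$ in the transport term forces the coupling distances to vanish on $\mathsf{supp}(\boldsymbol{\gamma})$, and Lemma \ref{lem: explicit formulation pure entropy}. The only deviations are technical and equally valid: you build the limiting pseudo-metric coupling by hand, gluing $X_1$ and $X_2$ along the isometry whose graph is $\mathsf{supp}(\boldsymbol{\gamma})$, where the paper instead extracts $\hat{\di}=\lim\hat{\di}_n$ by Ascoli--Arzel\`a and argues by contradiction; you get finiteness from the null-plan bound $F(0)(\mu_1(X_1)+\mu_2(X_2))$ rather than from the ``$\le$'' inequality; and you prove non-degeneracy via the optimal limit pair, whereas the paper simply notes that $\mathbf{PL}_p=0$ forces $\mathbf{D}_{p,n}=0$ for every $n$ and invokes Theorem \ref{th main}.
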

\begin{proof}
The first assertion follows by noticing that for any metric $\mathsf{d}$ we have
\begin{equation}\label{eq: monotonicity cost}
n\mathsf{d}(x_1,x_2)\uparrow \boldsymbol{\mathrm c}(x_1,x_2)=\begin{cases}0 \ &\textrm{if} \ x_1=x_2 \\ +\infty &\textrm{otherwise} \end{cases} \qquad \textrm{as} \ n \to \infty \ \textrm{for every} \ x_1,x_2\in X.
\end{equation}
The fact that $\mathsf{D}_{p,n}$ is a regular Entropy-Transport distance is a consequence of \cite[Theorem 6]{DepIeee}, noticing the obvious fact that $n\mathsf{d}$ is a complete and separable metric for any fixed $n$. 

In particular, since for every fixed $n$ we know that $\mathbf{D}_{p,n}$ is a metric on $\boldsymbol{\mathrm{X}}$ by Theorem \ref{th main}, we have that $\mathbf{PL}_p$ is nonnegative, symmetric, it satisfies the triangle inequality and 
$$\mathbf{PL}_p\left((X_1,\mathsf{d}_1,\mu_1),(X_2,\mathsf{d}_2,\mu_2)\right)=0 \quad \textrm{if} \quad (X_1,\mathsf{d}_1,\mu_1)=(X_2,\mathsf{d}_2,\mu_2).$$
We claim that 
$$\mathbf{PL}_p\left((X_1,\mathsf{d}_1,\mu_1),(X_2,\mathsf{d}_2,\mu_2)\right)=0$$
only if $(X_1,\mathsf{d}_1,\mu_1)=(X_2,\mathsf{d}_2,\mu_2)$ (as equivalence classes). Indeed, since $\mathbf{D}_{p,n}$ is nonnegative and nondecreasing, the fact that 
$$\lim_{n\to \infty}\mathbf{D}_{p,n}\left((X_1,\mathsf{d}_1,\mu_1),(X_2,\mathsf{d}_2,\mu_2)\right)=0$$
implies
$$\mathbf{D}_{p,n}\left((X_1,\mathsf{d}_1,\mu_1),(X_2,\mathsf{d}_2,\mu_2)\right)=0 \qquad \textrm{for every} \ n,$$
and the result follows because $\mathbf{D}_{p,n}$ is a distance on $\boldsymbol{\mathrm{X}}$.

At this level we do not know that $\mathbf{PL}_p$ is finite valued, which is a consequence of \eqref{eq: conv to entropy distances} together with the fact that $\mathsf{PL}_p$ is a (finite valued) distance on the space of measures as recalled above.  

In order to prove \eqref{eq: conv to entropy distances}, we first notice that the monotonicity \eqref{eq: monotonicity cost} easily implies that
$$\mathbf{PL}_p\left((X_1,\mathsf{d}_1,\mu_1),(X_2,\mathsf{d}_2,\mu_2)\right)\le \inf \mathsf{PL}_{p}(\psi^1_{\sharp}\mu_1,\psi^2_{\sharp}\mu_2),$$
giving the finiteness of  $\mathbf{PL}_p\left((X_1,\mathsf{d}_1,\mu_1),(X_2,\mathsf{d}_2,\mu_2)\right)$.

For the converse inequality, thanks to Lemma \ref{lem:esistenza ottimo} we know that for every $(X_1,\mathsf{d}_1,\mu_1)$, $(X_2,\mathsf{d}_2,\mu_2)$ and for every $n \in {\mathbb N}$ there exist a measure $\boldsymbol{\gamma}_n\in \Mi(X_1\times X_2)$ and a pseudo-metric coupling $\hat{\mathsf{d}}_n$ between $\mathsf{d}_1$ and $\mathsf{d}_2$ such that:
\begin{multline}\label{eq: explicit D^2_p,n small than C}
\infty> \mathbf{PL}^2_p\left((X_1,\mathsf{d}_1,\mu_1),(X_2,\mathsf{d}_2,\mu_2)\right)\ge \mathbf{D}^2_{p,n}((X_1,\mathsf{d}_1,\mu_1),(X_2,\mathsf{d}_2,\mu_2))\\
= \sum_{i=1}^2D_{U_p}(\gamma_{n,i}||\mu_i)+\int_{X_1\times X_2}n\hat{\mathsf{d}}_n(x,y)\,\dd\boldsymbol{\gamma_n} \qquad \textrm{for every} \ n\in \mathbb{N}.
\end{multline} 
By the superlinearity of the entropy functionals we can infer the existence (up to subsequence) of a weak limit ${\boldsymbol{\gamma}}\in \Mi(X_1\times X_2)$ of the sequence $\{{\boldsymbol{\gamma}_n}\}_{n\in \mathbb{N}}$. We also know that
\begin{equation}
\liminf_{n\to \infty} \sum_{i=1}^2D_{U_p}(\gamma_{n,i}||\mu_i) \geq \sum_{i=1}^2D_{U_p}(\gamma_i||\mu_i),
\end{equation}
where we have used the usual notation for the marginal measures.
If $\boldsymbol{\gamma}$ is the null measure the result follows trivially. Otherwise, since the integral 
$$\int_{X_1\times X_2}\hat{\mathsf{d}}_n(x,y)\,\dd\boldsymbol{\gamma_n}$$
is bounded from above we can argue as in the step $2$ of the proof of Lemma \ref{lem:esistenza ottimo} and we deduce the existence of a pseudo-metric coupling $\hat{\mathsf{d}}$ between $\mathsf{d}_1$ and $\mathsf{d}_2$ such that $\hat{\mathsf{d}}_n$ converges (up to subsequence) pointwise to $\hat{\mathsf{d}}$ and the convergence is uniform on compact sets. 
\\By recalling the explicit formulation of the right hand side of \eqref{eq: conv to entropy distances} given in Lemma \ref{lem: explicit formulation pure entropy}, the proof is completed if we show that $\mathsf{supp}(\boldsymbol{\gamma})\subset \{\hat{\di}=0\}$.
Let us suppose by contradiction the existence of a point $(\bar{x},\bar{y})\in \mathsf{supp}(\boldsymbol{\gamma})$ such that $\hat{\di}(\bar{x},\bar{y})=k>0.$
Fix $k/2>r>0$: for every $\epsilon>0$ sufficiently small we know that there exist $m\in \mathbb{N}$ such that for every $n>m$ we have
\begin{equation}\label{eq: two conditions positivity}
\begin{aligned}
&\boldsymbol{\gamma}_n\left(B_r(\bar{x})\times B_r(\bar{y})\right)\ge \boldsymbol{\gamma}\left(B_r(\bar{x})\times B_r(\bar{y})\right)-\epsilon>0,  \\ 
&\hat{\mathsf{d}}_n(\bar{x},\bar{y})-2r>\hat{\mathsf{d}}(\bar{x},\bar{y})-2r-\epsilon>0.
\end{aligned}
\end{equation} 
Starting from the bound in \eqref{eq: explicit D^2_p,n small than C} we have
\begin{align*}
\infty&> \mathbf{PL}^2_p\left((X_1,\mathsf{d}_1,\mu_1),(X_2,\mathsf{d}_2,\mu_2)\right)\geq n\int_{X_1\times X_2}\hat{\mathsf{d}}_n(x,y)\,\dd\boldsymbol{\gamma_n}\\
&\ge n\int_{B_r(\bar{x})\times B_r(\bar{y})}\hat{\mathsf{d}}_n(x,y)\,\dd\boldsymbol{\gamma_n} \ge n\left(\hat{\mathsf{d}}_n(\bar{x},\bar{y})-2r\right)\boldsymbol{\gamma}_n(B_r(\bar{x})\times B_r(\bar{y})) \\
& \geq n\left(\hat{\mathsf{d}}(\bar{x},\bar{y})-2r-\epsilon \right)[\boldsymbol{\gamma}(B_r(\bar{x})\times B_r(\bar{y}))-\epsilon]
\end{align*} 
that leads to a contradiction for $n$ sufficiently large thanks to \eqref{eq: two conditions positivity}.
\end{proof}

\begin{definition}
We say that a sequence of metric measure spaces $(X_n,\mathsf{d}_n,\mu_n)_{n\in \mathbb{N}}$ strongly measured-Gromov converges to the metric measure space $(X_{\infty},\mathsf{d}_{\infty},\mu_{\infty})$ if there exist a complete and separable metric space $(X,\mathsf{d})$ and isometric embeddings $\iota_n:X_n\to X$, $n\in \bar{\mathbb{N}}$, such that $(\iota_n)_{\sharp}\mu_n \to (\iota_{\infty})_{\sharp}\mu_{\infty}$ in $\mathscr{M}(X)$ with respect to the total variation topology.
\end{definition}

In the next Theorem we see that this notion of convergence coincides with the convergence induced by the distance $\mathbf{PL}_p$ for every $p\ge 1$.

\begin{theorem}\label{th: SPL vs mG convergence}
Let $p\ge 1$. A sequence $(X_n,\mathsf{d}_n,\mu_n)_{n\in \mathbb{N}}$ strongly measured-Gromov converges to $(X_{\infty},\mathsf{d}_{\infty},\mu_{\infty})$ if and only if 
\begin{equation}\label{eq: strong conv to 0 seq}
\mathbf{PL}_p\left((X_n,\mathsf{d}_n,\mu_n), (X_{\infty},\mathsf{d}_{\infty},\mu_{\infty})\right) \to 0 \quad \textrm{as} \ n \to \infty.
\end{equation}
\end{theorem}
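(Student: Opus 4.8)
The plan is to imitate, almost verbatim, the proof of Theorem \ref{th: DET vs mG convergence}, replacing the weak topology on $\mathscr{M}(X)$ by the total variation topology, $\Det$ by $\mathsf{PL}_p$, and $\DET$ by $\mathbf{PL}_p$. Two ingredients make this substitution legitimate: first, the gluing representation \eqref{eq: conv to entropy distances} of $\mathbf{PL}_p$, together with the explicit formula \eqref{eq: explicit formulation PE}, which have already been established in Theorem \ref{th: limit entropy distances} and Lemma \ref{lem: explicit formulation pure entropy}; second, the fact recalled in the text (on the basis of \cite[Lemma 8]{DepIeee} and \cite[Theorem 2.5]{LS}) that every pure entropy distance, in particular $\mathsf{PL}_p$, induces the total variation topology on $\mathscr{M}(X)$ for every complete and separable metric space $(X,\di)$.

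For the ``if'' direction, assume \eqref{eq: strong conv to 0 seq}. By \eqref{eq: conv to entropy distances}, for each $n$ we may pick a complete and separable metric space $(Y_n,\di_{Y_n})$ and isometric embeddings $\psi_n:\mathsf{supp}(\mu_n)\to Y_n$, $\psi_n^\infty:\mathsf{supp}(\mu_\infty)\to Y_n$ with $\mathsf{PL}_p((\psi_n)_\sharp\mu_n,(\psi_n^\infty)_\sharp\mu_\infty)\le \mathbf{PL}_p((X_n,\di_n,\mu_n),(X_\infty,\di_\infty,\mu_\infty))+1/n$, which tends to $0$. Glue the spaces $Y_n$ along the common factor $X_\infty$ exactly as in the proof of Theorem \ref{th: DET vs mG convergence}: set $Y:=\bigsqcup_{n\in\bar{\mathbb{N}}}X_n$ endowed with the pseudo-metric $\di_Y$ defined there, pass to the metric quotient and take the completion, which is complete and separable by (the argument of) Lemma \ref{lem: quotient disjoint union is metric}. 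Then every $X_n$, $n\in\bar{\mathbb{N}}$, is canonically isometrically embedded into $Y$ by a map $\psi_n'$, and since on overlapping subsets these embeddings coincide with the injective images inside the $Y_n$'s, the value of $\mathsf{PL}_p$ is unchanged — here one uses that $\mathsf{PL}_p$ is invariant under simultaneous injective Borel push-forwards, which follows from the explicit representation \eqref{eq: PE problem} and the behaviour of Radon--Nikodym densities under injective maps, just as in Lemma \ref{lem: invariance divergence under injection}. Hence $\mathsf{PL}_p((\psi_n')_\sharp\mu_n,(\psi_\infty')_\sharp\mu_\infty)\to 0$ in $\mathscr{M}(Y)$, and since $\mathsf{PL}_p$ metrizes the total variation topology, $(\psi_n')_\sharp\mu_n\to(\psi_\infty')_\sharp\mu_\infty$ in total variation; that is, $Y$ and the $\psi_n'$ realise the strong measured-Gromov convergence.

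For the converse, assume the strong measured-Gromov convergence, realised by a complete and separable $(X,\di)$ with isometric embeddings $\iota_n:X_n\to X$ such that $(\iota_n)_\sharp\mu_n\to(\iota_\infty)_\sharp\mu_\infty$ in total variation. Since $\mathsf{PL}_p$ metrizes the total variation topology on $\mathscr{M}(X)$, we get $\mathsf{PL}_p((\iota_n)_\sharp\mu_n,(\iota_\infty)_\sharp\mu_\infty)\to 0$; then \eqref{eq: conv to entropy distances}, using $(X,\di)$ with the restricted embeddings $\iota_n|_{\mathsf{supp}(\mu_n)}$ as a competitor, yields $\mathbf{PL}_p((X_n,\di_n,\mu_n),(X_\infty,\di_\infty,\mu_\infty))\le \mathsf{PL}_p((\iota_n)_\sharp\mu_n,(\iota_\infty)_\sharp\mu_\infty)\to 0$, which is \eqref{eq: strong conv to 0 seq}.

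I expect the only real obstacle to be a bookkeeping point in the ``if'' direction: verifying that the gluing construction of Theorem \ref{th: DET vs mG convergence} transports the total-variation smallness from each auxiliary space $Y_n$ to the single limiting space $Y$ — equivalently, that $\mathsf{PL}_p$ itself (not merely the total variation distance $\mathsf{TV}$) is unchanged under injective Borel push-forwards, and that the glued space is indeed complete and separable. Both are routine given Lemma \ref{lem: quotient disjoint union is metric}, the explicit form \eqref{eq: PE problem} of the pure entropy cost, and the cited equivalence of the $\mathsf{PL}_p$-topology with the total variation topology; everything else is a literal transcription of the proof of Theorem \ref{th: DET vs mG convergence}.
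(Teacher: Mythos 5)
Your proposal is correct and follows essentially the same route as the paper: the paper's proof is likewise a verbatim transcription of the proof of Theorem \ref{th: DET vs mG convergence}, using the characterization \eqref{eq: conv to entropy distances} of $\mathbf{PL}_p$, the same gluing of the auxiliary spaces $Y_n$ into a single complete separable space, and the fact that $\mathsf{PL}_p$ metrizes the total variation topology. The invariance of $\mathsf{PL}_p$ under the isometric (injective) embeddings into the glued space, which you justify via \eqref{eq: PE problem} and Lemma \ref{lem: invariance divergence under injection}, is exactly the implicit bookkeeping step the paper absorbs into ``by construction of $\psi_n'$''.
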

\begin{proof}
The proof is analogous to the one of Theorem \ref{th: DET vs mG convergence}.
Let us suppose the validity of \eqref{eq: strong conv to 0 seq}. By definition of $\mathbf{PL}_p$ we know that there exist a complete and separable metric space $(Y_n,\mathsf{d}_{Y_n})$ and isometric embeddings $\psi_n,\psi_n^{\infty}$ of $(X_n,\mathsf{d}_n)$ and $(X_{\infty},\mathsf{d}_{\infty})$ in $Y_n$ such that 
\begin{equation}\label{eq: strong conv in Y_n}
\mathsf{PL}_p ((\psi_n)_{\sharp}\mu_n,(\psi_{\infty})_{\sharp}\mu_{\infty})<\frac{1}{n} \, ,
\end{equation}
where $\mathsf{PL}_p$ is computed in the space $Y_n$.
We now define $\displaystyle Y:=\sqcup_{n\in \bar{\mathbb{N}}} X_n$ endowed with the pseudo-metric $\mathsf{d}_Y$ 
\begin{equation*}
\mathsf{d}_Y(y,y'):=
\begin{cases}
\mathsf{d}_{n}(y,y') \ \ &\textrm{if} \ \ y,y'\in X_n, n\in \bar{\mathbb{N}}\\ 
\mathsf{d}_{Y_n}(\psi_n(y),\psi_n^{\infty}(y')) \ \ &\textrm{if} \ \ y\in X_{n}, \, y' \in X_{\infty} \\ 
\mathsf{d}_{Y_n}(\psi_n^{\infty}(y), \psi_n(y')) \ \ &\textrm{if} \  y \in X_{\infty}, \, y'\in X_{n}  \\
\inf_{x\in X_{\infty}}\mathsf{d}_{Y_n}(\psi_n(y), \psi_n^{\infty}(x))+\mathsf{d}_{Y_m}(\psi_m(y'), \psi_m^{\infty}(x)) \ \ &\textrm{if} \ \ y\in X_n,\,  y'\in  X_{m}.
\end{cases} 
\end{equation*}
We consider the space $Y/\sim$ defined as the quotient of $Y$ with respect to the equivalence relation
\begin{equation}
y\sim y' \Leftrightarrow \mathsf{d}_Y(y,y')=0 \, ,
\end{equation}
and we then define the completion of this space, that we still denote by $(Y,\mathsf{d}_Y)$. It is easy to see that $Y$ is separable. By construction we notice that the set 
$$\psi_n(X_n)\cup \psi_n^{\infty}(X_{\infty})\subset Y_n$$ endowed with the distance $\mathsf{d}_{Y_n}$ is canonically isometrically embedded in $(Y,\mathsf{d}_Y)$, so that every space $X_n$, $n\in \bar{\mathbb{N}}$, is canonically isometrically embedded into $Y$ by a map $\psi_n'$. We claim now that $Y$ and $\psi_n'$ provide a realization of the strong measured-Gromov convergence. To see this, it is enough to notice that $(\psi_n')_{\sharp}\mu_n\to (\psi_{\infty}')_{\sharp}\mu_{\infty}$ in $\mathscr{M}(Y)$ with respect to the topology induced by the total variation, which is a consequence of the construction of $\psi_n'$, \eqref{eq: strong conv in Y_n} and the fact that $\mathsf{PL}_p$ induces the topology of the total variation.\\
For the converse, let us suppose that $(X_n,\mathsf{d}_n,\mu_n)_{n\in \mathbb{N}}$ strongly measured-Gromov converges to the metric measure space $(X_{\infty},\mathsf{d}_{\infty},\mu_{\infty})$. By definition we know that there exist a complete and separable metric space $(X,\mathsf{d})$ and isometric embeddings $\iota_n:X_n\to X$, $n\in \bar{\mathbb{N}}$, such that $(\iota_n)_{\sharp}\mu_n \to (\iota_{\infty})_{\sharp}\mu_{\infty}$ in $\mathscr{M}(X)$ with respect to the topology of the total variation. Since $\mathsf{PL}_p$ metrizes this topology on $\mathscr{M}(X)$ we know that 
$$\mathsf{PL}_p((\iota_n)_{\sharp}\mu_n, (\iota_{\infty})_{\sharp}\mu_{\infty})\rightarrow 0 \qquad \textrm{as} \ n \to \infty$$
and the result follows noticing that $(X,\mathsf{d})$ is a possible competitor in the characterization of $\mathbf{PL}_p$ given in Theorem \ref{th: limit entropy distances}.
\end{proof}

In the next easy proposition we show that the strong measured-Gromov convergence implies  the weak measured-Gromov convergence.

\begin{proposition}
Let $(X_n,\mathsf{d}_n,\mu_n)_{n\in \mathbb{N}}$ be a sequence of metric measure spaces strong measured-Gromov converging to $(X_{\infty},\mathsf{d}_{\infty},\mu_{\infty})$. Then $(X_n,\mathsf{d}_n,\mu_n)_{n\in \mathbb{N}}$ weakly measured-Gromov converges to $(X_{\infty},\mathsf{d}_{\infty},\mu_{\infty})$.
\end{proposition}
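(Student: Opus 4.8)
The plan is to show that the very space and embeddings witnessing the strong measured-Gromov convergence already witness the weak one, thereby reducing the statement to the elementary fact that total-variation convergence of finite nonnegative measures implies weak convergence.

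First I would invoke the definition of strong measured-Gromov convergence to obtain a complete and separable metric space $(X,\mathsf{d})$ and isometric embeddings $\iota_n\colon X_n\to X$, $n\in\bar{\mathbb{N}}$, such that $(\iota_n)_\sharp\mu_n\to(\iota_\infty)_\sharp\mu_\infty$ in $\mathscr{M}(X)$ with respect to the total variation topology, i.e. $\|(\iota_n)_\sharp\mu_n-(\iota_\infty)_\sharp\mu_\infty\|_{\mathsf{TV}}\to 0$. Then, for an arbitrary $f\in C_b(X)$, I would use the trivial bound
\begin{equation*}
\left| \int_X f\,\dd (\iota_n)_\sharp\mu_n - \int_X f\,\dd (\iota_\infty)_\sharp\mu_\infty \right| \le \|f\|_{\infty}\, \big\| (\iota_n)_\sharp\mu_n - (\iota_\infty)_\sharp\mu_\infty \big\|_{\mathsf{TV}},
\end{equation*}
whose right-hand side tends to $0$ by assumption. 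This gives $(\iota_n)_\sharp\mu_n\rightharpoonup(\iota_\infty)_\sharp\mu_\infty$ weakly in $\mathscr{M}(X)$, so the same triple $(X,\mathsf{d},\{\iota_n\}_{n\in\bar{\mathbb{N}}})$ realises the weak measured-Gromov convergence of $(X_n,\mathsf{d}_n,\mu_n)_{n\in\mathbb{N}}$ to $(X_\infty,\mathsf{d}_\infty,\mu_\infty)$, which is exactly the claim.

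I do not expect any genuine obstacle in this argument: the only ingredient is that the total variation norm $\|\nu\|_{\mathsf{TV}}=|\nu|(X)$ dominates the pairing of $\nu$ against bounded continuous (indeed against bounded Borel) functions, which is immediate from the definition. Equivalently, one may simply quote the general measure-theoretic fact that $\mathsf{TV}$-convergence implies weak convergence and apply it in the ambient space $(X,\mathsf{d})$ provided by the strong convergence.
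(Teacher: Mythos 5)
Your argument is correct and is essentially identical to the paper's proof: both take the ambient space and embeddings furnished by the strong measured-Gromov convergence and observe that total-variation convergence implies weak convergence, so the same data realises the weak measured-Gromov convergence. The only difference is that you spell out the elementary bound $\bigl|\int_X f\,\dd\nu\bigr|\le \|f\|_{\infty}\,|\nu|(X)$, which the paper leaves implicit.
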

\begin{proof}
By definition there exist a complete and separable metric space $(X,\di)$ and isometric embeddings $\iota_n:X_n\to X$, $n\in \bar{\mathbb{N}}$, such that $(\iota_n)_{\sharp}\mu_n \to (\iota_{\infty})_{\sharp}\mu_{\infty}$ in $\mathscr{M}(X)$ with respect to the total variation topology, which implies that $(\iota_n)_{\sharp}\mu_n \to (\iota_{\infty})_{\sharp}\mu_{\infty}$ with respect to the weak convergence. The result follows by the very definition of weak measured-Gromov convergence.
\end{proof}

We conclude the section with a list of examples of convergences.
\begin{examples*}
\begin{enumerate}
\item Let us consider the metric measure space $(X_{\infty},\di_{\infty},\mu_{\infty})$ defined as the unit interval $X_{\infty}=[0,1]$ endowed with the Euclidean distance and the Lebesgue measure. We know that $(X_{\infty},\di_{\infty},\mu_{\infty})$ can be approximated in the weak measured-Gromov convergence by a sequence of discrete spaces: take for instance $X_n=\{m/n\}_{m=0}^{n-1}$ endowed with the distance $\di_n$ inherited from the ambient $1$-dimensional Euclidean space and the measure $\mu_n$ such that $\mu_n(m/n)=1/n$ for every $m=0,...,n-1.$

We next claim that $(X_n,\di_n,\mu_n)$ does not converge to $(X_{\infty},\di_{\infty},\mu_{\infty})$ in the strong measured-Gromov convergence. 
Indeed, for any metric space $(X,\di)$ such that $X_n$ is isometrically embedded in $X$ via $\iota_n$, $n\in \bar{\mathbb{N}}$, we have 
\begin{equation*}
\begin{aligned}
\mathsf{TV}((\iota_n)_{\sharp}\mu_n,(\iota_{\infty})_{\sharp}\mu_{\infty})=\sup_{A\in \mathscr{B}(X)}|(\iota_{\infty})_{\sharp}\mu_{\infty}(A)-(\iota_n)_{\sharp}\mu_n(A)|\\
\geq \mu_{\infty}\left([0,1]\setminus \left\{\bigcup_{m=0}^{n-1}\,m/n\right\}\right)=1 \qquad \textrm{for any} \ n\in \mathbb{N}.
\end{aligned}
\end{equation*}
\item Let us consider the metric measure space $(X_{\infty},\di_{\infty},\mu_{\infty})$ defined as the unit interval $X_{\infty}=[0,1]$ endowed with the Euclidean distance and the measure $\mu_{\infty}=f\dd \mathcal{L}_{[0,1]}$, where $\mathcal{L}_{[0,1]}$ is the Lebesgue measure on $[0,1]$. Let us define the sequence of metric measure spaces $(X_n,\di_n,\mu_n)$ where $X_n=[0,1-1/n]$, $\di_n$ is the Euclidean distance and $\mu_n=f_n\dd \mathcal{L}_{[0,1-1/n]}.$ Let us suppose that $\tilde{f}_n\rightarrow f$ in $L^1([0,1])$, where $$\tilde{f}_n(x)=\begin{cases} f_n(x) &\textrm{if} \quad 0\le x\le 1-1/n\\
0 &\textrm{if} \quad 1-1/n< x\le 1.
\end{cases}
$$
Then, $(X_n,\di_n,\mu_n)\rightarrow (X_{\infty},\di_{\infty},\mu_{\infty})$ in the strong measured-Gromov convergence. To see this, it is enough to notice that for every $n\in \bar{\mathbb{N}}$ the maps $\iota_n:X_n\rightarrow X_{\infty}$ defined as $\iota_n(x)=x$ provides an isometric embedding such that the convergence $(\iota_n)_{\sharp}\mu_n \to (\iota_{\infty})_{\sharp}\mu_{\infty}$ with respect the total variation distance is exactly equivalent to $\tilde{f}_n\rightarrow f$ in $L^1([0,1])$.
\item  Let $(X_n,\di_n,\mu_n)$ be the sequence of collapsing flat tori  $S^1\times \frac{1}{n} S^1\subset {\mathbb R}^4$ endowed with the normalized measures  $\mu_n:= n/(4\pi^2)\,  {\rm dvol_{S^1\times \frac{1}{n} S^1}}.$ It is a standard fact that $(X_n,\di_n,\mu_n)$ converges to $(X_{\infty},\di_{\infty},\mu_{\infty})=(S^1, \di_{S^1}, (2\pi)^{-1} {\mathcal L}^1)$ in the weak measured-Gromov sense (this a standard example of a \emph{collapsing sequence}).
\\ We claim that the convergence cannot be improved to  strong measured-Gromov. 
Indeed, for any metric space $(X,\di)$ such that $X_n$ is isometrically embedded in $X$ via $\iota_n$, $n\in \bar{\mathbb{N}}$, we have 
\begin{equation*}
\begin{aligned}
\mathsf{TV}((\iota_n)_{\sharp}\mu_n,(\iota_{\infty})_{\sharp}\mu_{\infty})=\sup_{A\in \mathscr{B}(X)}|(\iota_{\infty})_{\sharp}\mu_{\infty}(A)-(\iota_n)_{\sharp}\mu_n(A)|\\
\geq \mu_n \left( \Big(S^1\times \frac{1}{n} S^1 \Big) \setminus \gamma_n\big(S^1\big) \right)   =1 \qquad \textrm{for any} \ n\in \mathbb{N},
\end{aligned}
\end{equation*}
where $\gamma_n:S^1\to S^1\times \frac{1}{n} S^1$ is an arbitrary isometric immersion.

\end{enumerate}
\end{examples*}

\subsection{Sturm's distances}
We notice that the classical $p$-Wasserstein distance $\mathcal{W}_p$, $p\ge 1$, can be recovered as a particular case of Entropy-Transport problem with the choices 
\begin{equation}\label{scelte costo-entropia sturm}
F(s)=I_{1}(s):=\begin{cases} 0 \ &\mathrm{if} \ s=1 \\ +\infty &\mathrm{otherwise} \end{cases} 
\qquad \boldsymbol{\mathrm c}(x_1,x_2)=\mathsf{d}^p(x_1,x_2).
\end{equation}

It is clear that $\mathcal{W}_p$ is not a \emph{regular} Entropy-Transport distance, however we show now that we can recover the $\mathbf{D}_p$-distance of Sturm (defined in Definition \ref{def: D}) as a limiting case of our framework.

\begin{theorem}\label{th: limit sturm}
Fix $p\ge 1$ and let us consider the cost function $\ell(d):=d^p$, the entropy function $F:=U_1$, and the power $a:=1/p$. Let us denote by $\mathsf{D}_{\et,n}$ the power $a$ of the Entropy-Transport cost induced by $F_n:=nU_1$ and $\boldsymbol{\mathrm c}=\ell(d)$. Then, for every metric measure spaces $(X_1,\mathsf{d}_1,\mu_1)$, $(X_2,\mathsf{d}_2,\mu_2)\in \boldsymbol{\mathrm{X}}$ 
\begin{equation}\label{eq: limit sturm distances}
\mathcal{D}_p\left((X_1,\mathsf{d}_1,\mu_1),(X_2,\mathsf{d}_2,\mu_2)\right):=\lim_{n \to \infty}\mathbf{D}_{\et,n}\left((X_1,\mathsf{d}_1,\mu_1),(X_2,\mathsf{d}_2,\mu_2)\right) \quad \textrm{is well defined},
\end{equation}
where $\mathbf{D}_{\et,n}$ denotes the function defined as in Definition \ref{def: D-ET} upon replacing $\Det$ by $\mathsf{D}_{\et,n}$.
Moreover, for every metric measure spaces $(X_1,\mathsf{d}_1,\mu_1)$, $(X_2,\mathsf{d}_2,\mu_2)\in \boldsymbol{\mathrm{X}}_{1,p}$ we have
\begin{equation}\label{eq: conv to sturm distances}
\mathcal{D}_p\left((X_1,\mathsf{d}_1,\mu_1),(X_2,\mathsf{d}_2,\mu_2)\right)=\mathbf{D}_p\left((X_1,\mathsf{d}_1,\mu_1),(X_2,\mathsf{d}_2,\mu_2)\right) \quad p\ge 1.
\end{equation}
\end{theorem}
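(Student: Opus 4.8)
The plan is to follow closely the proof of Theorem~\ref{th: limit entropy distances}, interchanging the roles of the transport and the entropy part: here the cost $\ell(d)=d^{p}$ is kept fixed, while the entropy $F_{n}=nU_{1}$ degenerates to the rigid marginal constraint $I_{1}$. I write $\mathcal{ET}_{n}(\cdot\,||\,\mu_{1},\mu_{2})$ and $\et_{n}$ for the Entropy-Transport functional and cost induced by $(F_{n},\di^{p})$, so that $\mathsf{D}_{\et,n}=\et_{n}^{1/p}$. For the well-definedness of the limit in \eqref{eq: limit sturm distances} I would only use monotonicity: since $U_{1}\ge 0$ and vanishes only at $s=1$, the sequence $F_{n}=nU_{1}$ is pointwise nondecreasing (and $F_{n}\uparrow I_{1}$), whereas $\di^{p}$ does not depend on $n$; hence for every $\boldsymbol{\gamma}$ the value $\mathcal{ET}_{n}(\boldsymbol{\gamma}||\mu_{1},\mu_{2})$ is nondecreasing in $n$, and so are $\et_{n}(\mu_{1},\mu_{2})$, $\mathsf{D}_{\et,n}$ and, being an infimum over a fixed family of isometric embeddings, $\mathbf{D}_{\et,n}$. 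Thus $\mathcal{D}_{p}=\lim_{n}\mathbf{D}_{\et,n}$ exists in $[0,+\infty]$ on all of $\boldsymbol{\mathrm{X}}$. I would \emph{not} need $\mathsf{D}_{\et,n}$ to be a regular Entropy-Transport distance (for $p\neq 2$ this is presumably false); I would only observe that Proposition~\ref{pr: formulazione D-LET} and Lemma~\ref{lem:esistenza ottimo} (whose proofs use of $(a,F,\ell)$ only that $F$ is superlinear and finite valued and that $\ell$ is continuous, increasing and has bounded sublevels) apply to $\mathbf{D}_{\et,n}$, giving in particular the representation $\mathbf{D}_{\et,n}^{p}=\inf_{C}\big\{D_{nU_{1}}(\gamma_{1}||\mu_{1})+D_{nU_{1}}(\gamma_{2}||\mu_{2})+\int_{X_{1}\times X_{2}}\hat{\di}(x,y)^{p}\,\dd\boldsymbol{\gamma}\big\}$ with $C$ as in Proposition~\ref{pr: formulazione D-LET}.

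For the inequality $\mathcal{D}_{p}\le\mathbf{D}_{p}$ on $\boldsymbol{\mathrm{X}}_{1,p}$, I would fix any complete separable metric space $(\hat X,\hat{\di})$ with isometric embeddings $\psi^{i}$ of $\mathsf{supp}(\mu_{i})$ and insert into $\et_{n}(\psi^{1}_{\sharp}\mu_{1},\psi^{2}_{\sharp}\mu_{2})$ the $\mathcal{W}_{p}$-optimal coupling of $\psi^{1}_{\sharp}\mu_{1}$ and $\psi^{2}_{\sharp}\mu_{2}$ (which exists, the $\mu_{i}$ being probability measures with finite $p$-moment): its marginals are exactly $\psi^{i}_{\sharp}\mu_{i}$, so both $D_{nU_{1}}$-terms vanish, giving $\et_{n}(\psi^{1}_{\sharp}\mu_{1},\psi^{2}_{\sharp}\mu_{2})\le\mathcal{W}_{p}^{p}(\psi^{1}_{\sharp}\mu_{1},\psi^{2}_{\sharp}\mu_{2})$. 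Taking $p$-th roots and then the infimum over embeddings yields $\mathbf{D}_{\et,n}\le\mathbf{D}_{p}$ for all $n$, hence $\mathcal{D}_{p}\le\mathbf{D}_{p}$; in particular $\mathcal{D}_{p}$ is finite on $\boldsymbol{\mathrm{X}}_{1,p}$.

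For the reverse inequality I would choose, for each $n$, a pair $(\boldsymbol{\gamma}_{n},\hat{\di}_{n})\in C$ with $\sum_{i}D_{nU_{1}}((\gamma_{n})_{i}||\mu_{i})+\int_{X_{1}\times X_{2}}\hat{\di}_{n}^{p}\,\dd\boldsymbol{\gamma}_{n}<\mathbf{D}_{\et,n}^{p}+1/n\le\mathcal{D}_{p}^{p}+1$. As all terms are nonnegative and $D_{nU_{1}}=nD_{U_{1}}$, this forces $D_{U_{1}}((\gamma_{n})_{i}||\mu_{i})\to 0$; by Lemma~\ref{lem: compattezza sottolivelli entropia} (applied with $\mathcal{K}=\{\mu_{i}\}$, using the superlinearity of $U_{1}$) the marginals $(\gamma_{n})_{i}$ are bounded and equally tight, so by Theorem~\ref{th: Prokhorov} a subsequence of $\boldsymbol{\gamma}_{n}$ converges weakly to some $\boldsymbol{\gamma}$ with marginals $\gamma_{i}$, and joint lower semicontinuity of $D_{U_{1}}$ (Lemma~\ref{lem: proprieta funzionale entropia}) gives $D_{U_{1}}(\gamma_{i}||\mu_{i})=0$, i.e.\ $\gamma_{i}=\mu_{i}$; in particular $\boldsymbol{\gamma}$ is a probability coupling, hence nonzero. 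Using the bound $\int\hat{\di}_{n}^{p}\,\dd\boldsymbol{\gamma}_{n}\le\mathcal{D}_{p}^{p}+1$, the uniform $1$-Lipschitzianity of $\hat{\di}_{n}$ with respect to $\di_{1}+\di_{2}$, and that $d\mapsto d^{p}$ has bounded sublevels, I would repeat Steps~2--3 of the proof of Lemma~\ref{lem:esistenza ottimo}: along a further subsequence $\hat{\di}_{n}$ converges pointwise, uniformly on compacts, to a finite pseudo-metric coupling $\hat{\di}$ of $\di_{1},\di_{2}$, with $\liminf_{n}\int\hat{\di}_{n}^{p}\,\dd\boldsymbol{\gamma}_{n}\ge\int\hat{\di}^{p}\,\dd\boldsymbol{\gamma}$. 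Embedding $X_{1}\sqcup X_{2}$ into the complete separable metric space $(\tilde X,\tilde{\di})$ of Lemma~\ref{lem: quotient disjoint union is metric} associated to $\hat{\di}$ and pushing $\boldsymbol{\gamma}$ forward yields a $\mathcal{W}_{p}$-coupling of $\psi^{1}_{\sharp}\mu_{1}$ and $\psi^{2}_{\sharp}\mu_{2}$, so that
$$\mathbf{D}_{p}^{p}\le\mathcal{W}_{p}^{p}(\psi^{1}_{\sharp}\mu_{1},\psi^{2}_{\sharp}\mu_{2})\le\int_{X_{1}\times X_{2}}\hat{\di}^{p}\,\dd\boldsymbol{\gamma}\le\liminf_{n\to\infty}\int_{X_{1}\times X_{2}}\hat{\di}_{n}^{p}\,\dd\boldsymbol{\gamma}_{n}\le\liminf_{n\to\infty}(\mathbf{D}_{\et,n}^{p}+1/n)=\mathcal{D}_{p}^{p}.$$
Together with the previous paragraph this proves \eqref{eq: conv to sturm distances}.

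The step I expect to be the main obstacle is the reverse inequality, because one must control the entropic and the metric parts along a single subsequence. The crucial mechanism is that the factor $n$ in front of $D_{U_{1}}$, combined with the superlinearity and the strict positivity of $U_{1}$ away from $s=1$, forces the near-optimal marginals $(\gamma_{n})_{i}$ to converge to the exact data $\mu_{i}$ — so that no entropic relaxation survives in the limit and the Entropy-Transport cost collapses onto the pure Wasserstein cost — while simultaneously the pseudo-metric couplings $\hat{\di}_{n}$ have to be prevented from degenerating, which is exactly what the Ascoli-Arzelà argument (based on the uniform Lipschitz bound and the bounded sublevels of $d\mapsto d^{p}$) and the lower semicontinuity argument of Lemma~\ref{lem:esistenza ottimo} deliver; these two compactness mechanisms must be run jointly.
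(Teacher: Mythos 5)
Your proposal is correct and follows essentially the same route as the paper: monotonicity of $F_n=nU_1\uparrow I_1$ for the existence of the limit, the comparison $D_{F_n}\le D_{I_1}$ (equivalently, inserting a marginal-preserving coupling) for $\mathcal{D}_p\le\mathbf{D}_p$, and for the reverse inequality the same compactness-plus-lower-semicontinuity scheme of Lemma \ref{lem:esistenza ottimo} applied to (near-)optimal pairs $(\boldsymbol{\gamma}_n,\hat{\di}_n)$, together with the observation that Proposition \ref{pr: formulazione D-LET} and Lemma \ref{lem:esistenza ottimo} never use that $\mathsf{D}_{\et,n}$ is a metric. The only deviations are cosmetic: you force $\gamma_i=\mu_i$ via $D_{nU_1}=nD_{U_1}$ and lower semicontinuity of the fixed functional $D_{U_1}$ rather than via the increasing-sequence statement of Lemma \ref{lem: proprieta funzionale entropia}, and you exhibit the limiting pair as a competitor for $\mathbf{D}_p$ through the quotient space of Lemma \ref{lem: quotient disjoint union is metric} instead of citing Sturm's explicit formulation of $\mathbf{D}_p$.
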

\begin{proof}
We start by proving that the limit \eqref{eq: limit sturm distances} exists on the set $\boldsymbol{\mathrm{X}}$. 
To see this, we notice that $nF(s)\uparrow I_{1}(s)$ for every $s\in [0,\infty)$. In particular, using the explicit formulation of $\mathbf{D}_{\et,n}$ proved in Proposition \ref{pr: formulazione D-LET} (we remark that we have not used the fact that $\mathbf{D}$ is a distance in the proof of the proposition), we can infer that $\mathbf{D}_{\et,n}$ is nondecreasing so that the limit exists. 

It remains to prove that for every $p\ge 1$ we have $\mathcal{D}_p=\mathbf{D}_p$ on the set $\boldsymbol{\mathrm{X}}_{1,p}$. 
Since for every $n\in \mathbb{N}$, for every complete and separable metric space $(X,\mathsf{d})$ and for every $\mu,\gamma\in \Mi(X)$ we have
$$D_{F_n}(\gamma||\mu)\le D_{I_{1}}(\gamma||\mu),$$
it is clear that $\mathcal{D}_p\le\mathbf{D}_p$. 
For the converse inequality, we know that for every $(X_1,\mathsf{d}_1,\mu_1)$, $(X_2,\mathsf{d}_2,\mu_2)\in \boldsymbol{\mathrm{X}}_{1,p}$ we have
\begin{multline}
\mathbf{D}^p_p\left((X_1,\mathsf{d}_1,\mu_1),(X_2,\mathsf{d}_2,\mu_2)\right)\ge \mathcal{D}^p_p\left((X_1,\mathsf{d}_1,\mu_1),(X_2,\mathsf{d}_2,\mu_2)\right)\\
\ge \mathbf{D}^p_{\et,n}((X_1,\mathsf{d}_1,\mu_1),(X_2,\mathsf{d}_2,\mu_2))=\sum_{i=1}^2D_{F_n}(\gamma_{n,i}||\mu_i)+\int_{X_1\times X_2}\hat{\mathsf{d}}^p_n(x,y)\,\dd\boldsymbol{\gamma_n},
\end{multline}
for some $\boldsymbol{\gamma}_n\in \Mi(X_1\times X_2)$ and metric coupling $\hat{\mathsf{d}}_n$ of $\mathsf{d}_1$ and $\mathsf{d}_2$, whose existence is a consequence of Lemma \ref{lem:esistenza ottimo} (notice that we have only used the properties of the cost and the entropy in the proof of the lemma, while the fact that $\mathsf{D}_{\et}$ is a metric plays no role).
Since $D_{F_n}$ is bounded from above by the superlinear entropy $D_{I_{1}}$, by using Lemma \ref{lem: proprieta funzionale entropia} and Lemma \ref{lem: compattezza sottolivelli entropia} we can infer that $\boldsymbol{\gamma}_n$ is weakly converging (up to subsequence) to a limit $\boldsymbol{\gamma}\in \Mi(X_1\times X_2)$ and 
$$\liminf \sum_{i=1}^2D_{F_n}(\gamma_{n,i}||\mu_i) \ge \sum_{i=1}^2 D_{I_{1}}(\gamma_i||\mu_i).$$
By reasoning as in step 2 and step 3 of Lemma \ref{lem:esistenza ottimo}, we know that there exists a pseudo-metric coupling $\hat{\mathsf{d}}$ of $\mathsf{d}_1$ and $\mathsf{d}_2$ such that $\hat{\mathsf{d}}_n$ converges (up to subsequence) pointwise to $\hat{\mathsf{d}}$ and the convergence is uniform on compact sets. Moreover, we have that
\begin{equation}
\liminf_n \int_{X_1\times X_2}\hat{\mathsf{d}}^p_n(x,y)\,\dd\boldsymbol{\gamma}_n \geq \int_{X_1\times X_2}\hat{\mathsf{d}}^p(x,y)\dd\boldsymbol{\gamma},
\end{equation}
and the result follows since $\hat{\mathsf{d}}$ and $\boldsymbol{\gamma}$ are competitors in the explicit formulation of $\mathbf{D}_p$ (see \cite[Lemma 3.3]{St1}) as a consequence of the fact that 
$$D_{I_{1}}(\gamma_i||\mu_i)<\infty \Longleftrightarrow \ \gamma_i=\mu_i, \  i=1,2.$$
\end{proof}

\begin{remark}\label{rem: sturm regular ET}
We point out that we are not claiming that the sequence $\mathsf{D}_{\et,n}$ defined in Theorem \ref{th: limit sturm} is a sequence of \emph{regular} Entropy-Transport distances. Actually, this is the case for $p=2$ as a conseguence of \cite[Theorem 7.25]{LMS}, noticing that the cost of the Entropy-Transport problem induced by $(nU_1,\di^2)$ is $n$ times the cost of the Entropy-Transport problem induced by $(U_1,(\mathsf{d}/\sqrt{n})^2)$ and $\mathsf{d}/\sqrt{n}$ is trivially a complete and separable distance. 
\\In this situation, one can show that $\mathbf{D}_2$ defines a metric possibly attaining the value $+\infty$ on the whole set $\boldsymbol{\mathrm{X}}$, by reasoning as in the proof of Theorem \ref{th: limit entropy distances}.
\end{remark}

\subsection{Piccoli-Rossi distance}

A natural extension of the $\mathcal{W}_1$-metric in the context of Entropy-Transport problem is the Piccoli-Rossi \emph{generalized Wasserstein distance} $\mathsf{BL}$ \cite{PR1,PR2}, induced by the choices
\begin{equation}\label{def choice PR}
F(s)=|s-1|, \qquad \boldsymbol{\mathrm c}(x_1,x_2)=\mathsf{d}(x_1,x_2).
\end{equation}
We notice that the entropy function is not superlinear.

It is proved in \cite{PR1} that $\mathsf{BL}$ is a complete distance on $\Mi(X)$ for every Polish space $(X,\di)$ (\cite{PR1} is in the Euclidean setting, however  the proof  for a Polish space can be performed verbatim).
 
By exploiting the dual formulation of this distance, we know that $\mathsf{BL}$ corresponds to the so-called \emph{flat metric} or \emph{bounded Lipschitz distance} (see \cite[Theorem 2]{PR2}), namely
\begin{equation}
\mathsf{BL}(\mu_1,\mu_2)=\sup\left\{\int_X f\, \dd(\mu_1-\mu_2)\, : \, \| f \|_{\infty}\leq 1, \, \|f\|_{\mathsf{Lip}}\le 1 \right\} \qquad \textrm{for any} \ \mu_1,\mu_2\in \Mi(X).
\end{equation}

We also recall this useful lemma, which is proved in \cite[Proposition 1]{PR1}.
\begin{lemma}\label{lem PR}
Given $\mu_1,\mu_2\in \Mi(X)$, let us consider the Entropy-Transport problem induced by $(F,\boldsymbol{\mathrm c})$ defined in \eqref{def choice PR}. Then the infimum of the problem \eqref{def: problema di trasporto entropico} is attained by a measure $\boldsymbol{\gamma}\in \Mi(X\times X)$ such that $\gamma_i:=(\pi^i)_{\sharp}\boldsymbol{\gamma}\le \mu_i$, $i=1,2$.
\end{lemma}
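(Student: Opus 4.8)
The plan is to first reduce the infimum defining $\et(\mu_1,\mu_2)$ to plans whose marginals are dominated by the data, and then obtain existence on this smaller class by the usual direct method. The starting observation is that for $F(s)=|s-1|$ one has $F'_\infty=1$, and writing the Lebesgue decomposition $\gamma=\sigma\mu+\gamma^\perp$ a direct computation gives $D_F(\gamma||\mu)=\|\gamma-\mu\|_{\mathsf{TV}}$ for all $\gamma,\mu\in\Mi(X)$ (both sides equal $\int_X|\sigma-1|\,\dd\mu+\gamma^\perp(X)$). In particular the problem is feasible, since the null plan has finite cost $\mu_1(X)+\mu_2(X)$.

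For the reduction I would "trim" an arbitrary competitor $\boldsymbol{\gamma}\in\Mi(X\times X)$ in two steps. Let $\sigma_1:=\frac{\dd(\gamma_1\wedge\mu_1)}{\dd\gamma_1}\in[0,1]$ and set $\boldsymbol{\gamma}':=(\sigma_1\circ\pi^1)\,\boldsymbol{\gamma}$; then its first marginal is $\gamma_1\wedge\mu_1\le\mu_1$, while its second marginal $\gamma_2'$ satisfies $\gamma_2'\le\gamma_2$ with $(\gamma_2-\gamma_2')(X)=(\gamma_1-\mu_1)_+(X)$. Since $\sigma_1\le1$ the transport term does not increase, the first entropy term drops by exactly $(\gamma_1-\mu_1)_+(X)$, and by the triangle inequality for total variation the second entropy term increases by at most $\|\gamma_2-\gamma_2'\|_{\mathsf{TV}}=(\gamma_1-\mu_1)_+(X)$, so $\mathcal{ET}(\boldsymbol{\gamma}'||\mu_1,\mu_2)\le\mathcal{ET}(\boldsymbol{\gamma}||\mu_1,\mu_2)$. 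Performing the symmetric operation in the second coordinate, with $\sigma_2:=\frac{\dd(\gamma_2'\wedge\mu_2)}{\dd\gamma_2'}$ and $\boldsymbol{\gamma}'':=(\sigma_2\circ\pi^2)\,\boldsymbol{\gamma}'$, yields $\gamma_2''=\gamma_2'\wedge\mu_2\le\mu_2$ and $\gamma_1''\le\gamma_1'\le\mu_1$, again without increasing the functional (the gain $(\gamma_2'-\mu_2)_+(X)$ in the second entropy term compensates the corresponding increase $\gamma_1'(X)-\gamma_1''(X)=(\gamma_2'-\mu_2)_+(X)$ in the first). Hence
\begin{equation*}
\et(\mu_1,\mu_2)=\inf\Big\{\mathcal{ET}(\boldsymbol{\gamma}||\mu_1,\mu_2):\boldsymbol{\gamma}\in\Mi(X\times X),\ \gamma_i\le\mu_i\ \text{for}\ i=1,2\Big\}.
\end{equation*}

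For existence, take a minimizing sequence $\boldsymbol{\gamma}_n$ in this restricted class. Since $\gamma_{i,n}\le\mu_i$, the families $\{\gamma_{i,n}\}_n$ are bounded and equally tight (tightness of the single finite measure $\mu_i$), hence so is $\{\boldsymbol{\gamma}_n\}_n$; by Prokhorov's theorem (Theorem \ref{th: Prokhorov}) a subsequence converges weakly to some $\boldsymbol{\gamma}$, and continuity of $(\pi^i)_\sharp$ gives $\gamma_{i,n}\rightharpoonup\gamma_i$. The transport term is lower semicontinuous under weak convergence because $\mathsf{d}$ is nonnegative and lower semicontinuous, each term $D_F(\cdot||\mu_i)$ is lower semicontinuous by Lemma \ref{lem: proprieta funzionale entropia}, and the Portmanteau inequality on open sets passes the constraint $\gamma_{i,n}\le\mu_i$ to the limit, so $\gamma_i\le\mu_i$. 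Therefore $\boldsymbol{\gamma}$ attains the infimum and satisfies $\gamma_i\le\mu_i$, as claimed.

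The only genuinely delicate point is the bookkeeping in the trimming step: verifying that the decrease of one entropy term after modifying a marginal dominates the possible increase of the other, which rests on the identity $\|\gamma_2-\gamma_2'\|_{\mathsf{TV}}=(\gamma_1-\mu_1)_+(X)$ and on the total-variation triangle inequality. Everything else — the explicit form of $D_F$, feasibility, and the compactness/lower-semicontinuity argument — is routine with the tools already available.
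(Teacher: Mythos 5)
Your argument is correct. Note, however, that the paper itself does not prove this lemma: it simply invokes \cite[Proposition 1]{PR1}, where the analogous statement is established for the Piccoli--Rossi generalized Wasserstein distance in its submeasure formulation. What you have done is give a self-contained proof directly in the Entropy-Transport plan formalism, which is actually well matched to how the lemma is stated here (and avoids translating between the two formulations). The key points all check out: for $F(s)=|s-1|$ one has $F'_\infty=1$ and $D_F(\gamma||\mu)=|\gamma-\mu|(X)$; trimming by $\sigma_1=\dd(\gamma_1\wedge\mu_1)/\dd\gamma_1\le 1$ lowers the first entropy term by exactly $(\gamma_1-\mu_1)_+(X)$, raises the second by at most $(\gamma_2-\gamma_2')(X)=(\gamma_1-\mu_1)_+(X)$, and does not increase the nonnegative transport term, and the symmetric trimming preserves $\gamma_1''\le\gamma_1'\le\mu_1$; the direct method then works because the domination $\gamma_{i,n}\le\mu_i$ supplies the boundedness and tightness that Proposition \ref{prop: proprieta ET} would otherwise get from superlinearity of $F$ (which fails here) --- this substitution is precisely the right idea.

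Two cosmetic points. First, passing the constraint $\gamma_{i}\le\mu_i$ to the limit from the Portmanteau inequality on open sets implicitly uses outer regularity of finite Borel measures on metric spaces to upgrade from open to all Borel sets; this is routine but worth a half-sentence. Second, your $\|\cdot\|_{\mathsf{TV}}$ is the total mass of the variation measure, $|\gamma-\mu|(X)$, which is the quantity matching $D_F$; elsewhere the paper uses $\mathsf{TV}$ for $\sup_{A}|\mu(A)-\nu(A)|$, so a word fixing the convention would avoid ambiguity. Neither affects the validity of the proof.
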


We have the following: 

\begin{theorem}\label{th: limit PR}
Fix $a=1$, $\ell(d):=d$ and let us consider the sequence $(F_n)_{n\ge 2}$ defined by
$$F_n(s):=\begin{cases} |s-1| \qquad &\textrm{if} \ \ 0\le s \le n \\ \frac{(s-1)^2}{n-1} \qquad &\textrm{if} \ \ s>n. \end{cases}$$
Let us denote by $\mathsf{D}_{\et,n}$ the Entropy-Transport cost induced by $a$, $F_n$ and $\boldsymbol{\mathrm c}=\ell(d)$. 

Then, for every metric measure spaces $(X_1,\mathsf{d}_1,\mu_1)$, $(X_2,\mathsf{d}_2,\mu_2)\in \boldsymbol{\mathrm{X}}$ the quantity
\begin{equation}\label{eq: limit PR distances}
\mathbf{BL}\left((X_1,\mathsf{d}_1,\mu_1),(X_2,\mathsf{d}_2,\mu_2)\right):=\mathbf{D}_{\et,n}\left((X_1,\mathsf{d}_1,\mu_1),(X_2,\mathsf{d}_2,\mu_2)\right) \quad \textrm{is well defined},
\end{equation}
where $\mathbf{D}_{\et,n}$ denotes the function defined as in Definition \ref{def: D-ET} upon replacing $\Det$ by $\mathsf{D}_{\et,n}$.

Moreover, $\mathbf{BL}$ defines a complete metric on $\boldsymbol{\mathrm{X}}$ such that
\begin{equation}\label{eq: conv to PR distances}
\mathbf{BL}\left((X_1,\mathsf{d}_1,\mu_1),(X_2,\mathsf{d}_2,\mu_2)\right)=\inf \mathsf{BL}(\psi^1_{\sharp}\mu_1,\psi^2_{\sharp}\mu_2),
\end{equation}
where the infimum in the right hand side is taken over all complete and separable metric spaces $(\hat{X},\hat{\mathsf{d}})$ with isometric embeddings $\psi^1:\mathsf{supp}(\mu_1)\rightarrow \hat{X}$ and $\psi^2:\mathsf{supp}(\mu_2)\rightarrow \hat{X}$.
\end{theorem}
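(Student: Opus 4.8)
The plan is to show that the sequence $\mathsf{D}_{\et,n}$ does not actually depend on $n$, being equal on every space to the Piccoli--Rossi cost $\mathsf{BL}$. Precisely, for a complete and separable metric space $(\hat X,\hat{\mathsf d})$ write $\et_G$ for the Entropy-Transport cost with entropy $G\in\Gamma_0(\R_+)$ and cost $\boldsymbol{\mathrm c}(x,y)=\hat{\mathsf d}(x,y)$, and put $U(s):=|s-1|$, so that $\mathsf{BL}=\et_U$ (here $a=1$) and $\mathsf{D}_{\et,n}=\et_{F_n}$; I claim that $\et_{F_n}=\et_U$ for every $n\ge 2$. Granting this, \eqref{eq: limit PR distances} is well posed, since the lifted quantity $\mathbf D_{\et,n}=\inf \mathsf{D}_{\et,n}(\psi^1_\sharp\mu_1,\psi^2_\sharp\mu_2)$ is then independent of $n$, and \eqref{eq: conv to PR distances} is immediate from the definition of $\mathbf D_{\et,n}$ because $\mathsf{D}_{\et,n}=\mathsf{BL}$; the completeness statement will follow by checking that $\mathsf{D}_{\et,n}$ is a regular Entropy-Transport distance and invoking Theorem \ref{th main}.

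To prove $\et_{F_n}=\et_U$ I argue by two inequalities. First, $F_n(s)\ge U(s)$ for all $s\ge 0$: the two functions coincide on $[0,n]$, and for $s>n$ one has $\frac{(s-1)^2}{n-1}\ge s-1$ because $s-1\ge n-1$. Hence $D_{F_n}(\gamma||\mu)\ge D_U(\gamma||\mu)$ for all $\gamma,\mu\in\Mi(\hat X)$, and taking the infimum over plans, $\et_{F_n}\ge\et_U=\mathsf{BL}$. For the reverse inequality, fix $\mu_1,\mu_2\in\Mi(\hat X)$ and apply Lemma \ref{lem PR} to get a $\mathsf{BL}$-optimal plan $\boldsymbol\gamma\in\Mi(\hat X\times\hat X)$ with $\gamma_i\le\mu_i$, $i=1,2$. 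Then each $\gamma_i$ is absolutely continuous with respect to $\mu_i$ with density $\sigma_i\le 1$, so its singular part vanishes and, since $F_n\equiv U$ on $[0,1]$,
\begin{equation*}
D_{F_n}(\gamma_i||\mu_i)=\int_{\hat X}F_n(\sigma_i)\,\dd\mu_i=\int_{\hat X}|\sigma_i-1|\,\dd\mu_i=D_U(\gamma_i||\mu_i).
\end{equation*}
Since the transport term is unchanged, $\mathcal{ET}_{F_n}(\boldsymbol\gamma||\mu_1,\mu_2)=\mathcal{ET}_U(\boldsymbol\gamma||\mu_1,\mu_2)=\et_U(\mu_1,\mu_2)=\mathsf{BL}(\mu_1,\mu_2)$, whence $\et_{F_n}(\mu_1,\mu_2)\le\mathsf{BL}(\mu_1,\mu_2)$. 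This gives $\et_{F_n}=\mathsf{BL}$.

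Next I verify that $\mathsf{D}_{\et,n}$ is a regular Entropy-Transport distance in the sense of Definition \ref{def: Entropy-Transport distance}. The exponent $a=1$ lies in $(0,1]$, and $\ell(d)=d$ is continuous, convex and vanishes exactly at $d=0$. Each $F_n$ is finite valued; it is convex (piecewise affine-then-quadratic, with non-decreasing one-sided derivatives across the breakpoints $s=1$ and $s=n$), lower semicontinuous, and satisfies $F_n(1)=0$, so $F_n\in\Gamma_0(\R_+)$; and it is superlinear, since $F_n(s)/s=\frac{(s-1)^2}{(n-1)s}\to\infty$. Finally, by the previous step the distance that $\mathsf{D}_{\et,n}$ induces on $\Mi(X)$ is $\mathsf{BL}$, which is a complete metric on $\Mi(X)$ by \cite{PR1}; being the bounded-Lipschitz (flat) metric it metrizes the weak topology on $\Mi(X)$ — a standard fact, for which the nontrivial implication uses Prokhorov's Theorem \ref{th: Prokhorov} together with Arzel\`a--Ascoli on equi-tight sets — and is therefore separable. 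Thus all requirements of Definition \ref{def: Entropy-Transport distance} hold, and Theorem \ref{th main} yields that $\mathbf{BL}=\mathbf D_{\et,n}$ is a complete (indeed complete and separable) metric on $\boldsymbol{\mathrm X}$.

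The only genuinely delicate point is the identity $\et_{F_n}=\mathsf{BL}$, and within it the reverse inequality, which rests entirely on Lemma \ref{lem PR}: one needs an optimal plan whose marginals are dominated by the data, because only on $\{\sigma\le 1\}$ do the superlinear entropies $F_n$ and the non-superlinear $U$ agree — and it is precisely the passage to superlinear $F_n$ that lets one fall inside the scope of the regular theory and of Theorem \ref{th main}. Everything else is a bookkeeping check of the axioms of Definition \ref{def: Entropy-Transport distance} and an appeal to Theorem \ref{th main}, using the completeness of $\mathsf{BL}$ on $\Mi(X)$ recorded in \cite{PR1}.
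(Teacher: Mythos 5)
Your treatment of the identity $\mathsf{D}_{\et,n}=\mathsf{BL}$ coincides with the paper's: the inequality $F_n\ge |s-1|$ on $[0,\infty)$ gives ``$\ge$'', and testing the $F_n$-functional on the optimal plan of Lemma \ref{lem PR}, whose marginal densities are $\le 1$ where $F_n$ and $|s-1|$ agree, gives ``$\le$''; the well-posedness of \eqref{eq: limit PR distances} and the formula \eqref{eq: conv to PR distances} then follow exactly as in the paper.

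The gap is in the second half. You assert that $\mathsf{D}_{\et,n}$ is a \emph{regular} Entropy-Transport distance and then invoke Theorem \ref{th main}; the paper states the opposite in the remark immediately after Theorem \ref{th: limit PR}: in the authors' view the topology induced by $\mathsf{BL}$ on $\Mi(X)$ does \emph{not} coincide with the weak topology (convergence in $\mathsf{BL}$ is characterized in \cite[Theorem 3]{PR1} by weak convergence \emph{plus} an additional tightness condition), so the last requirement of Definition \ref{def: Entropy-Transport distance} fails and Theorem \ref{th main} is not available. Accordingly, the paper does not deduce completeness from Theorem \ref{th main}: it proves that $\mathbf{BL}$ is a complete metric by re-running the relevant steps of that proof directly, the metric axioms along Step 1 (which only use superlinearity and finiteness of $F_n$, the properties of $\ell(d)=d$ and the triangle inequality of $\mathsf{BL}$ on a fixed ambient space, via Proposition \ref{pr: formulazione D-LET} and Lemma \ref{lem:esistenza ottimo}) and completeness along Step 2, which only needs the completeness of $\mathsf{BL}$ on $\Mi(X)$ from \cite{PR1}. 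Your argument instead hinges entirely on the one-line claim that $\mathsf{BL}$ metrizes the weak topology on $\Mi(X)$ for every Polish space; this is precisely the contested point, it is not proved in your proposal, and it cannot be waved through as ``a standard fact'' when the paper explicitly claims the contrary — note also that your conclusion (separability of $(\boldsymbol{\mathrm{X}},\mathbf{BL})$, and, through Theorem \ref{th: DET vs mG convergence}, the identification of $\mathbf{BL}$-convergence with weak measured-Gromov convergence) is strictly stronger than what Theorem \ref{th: limit PR} asserts. If you want to keep your route you must prove the metrization claim in full: the direction ``weak $\Rightarrow\mathsf{BL}$'' can be obtained because a weakly convergent sequence of nonnegative measures is equally tight by Theorem \ref{th: Prokhorov}, and ``$\mathsf{BL}\Rightarrow$ weak'' by a Portmanteau-type argument, so the claim is plausibly salvageable for nonnegative measures on Polish spaces — but then you are overriding the paper's remark and must say so and argue it carefully. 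The safer course, and the one the paper takes, is to drop the regularity claim and prove completeness of $\mathbf{BL}$ on $\boldsymbol{\mathrm{X}}$ directly along Step 2 of Theorem \ref{th main}.
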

\begin{proof}
We notice that $(F_n)_{n\ge 2}$ is a sequence of continuous superlinear entropy functions. We also know that $F_n(s)=|s-1|$ in $[0,1]$ and $F_n(s)\geq |s-1|$ in $[0,\infty)$ for every $n\ge 2$, which implies that $\mathsf{D}_{\et,n}$ coincide with $\mathsf{BL}$ thanks to Lemma \ref{lem PR}. In particular we see that $\mathbf{D}_{\et,n}$ does not depend on $n$ and also the identity \eqref{eq: conv to PR distances} follows.

The fact that $\mathbf{BL}$ is a complete distance on $\boldsymbol{\mathrm{X}}$ is a consequence of the completeness of $\mathsf{BL}$ (and thus $\mathsf{D}_{\et,n}$) on the set of measures $\Mi(X)$, and can be proved along the lines of Step 2 in the proof Theorem \ref{th main}. 
\end{proof}

\begin{remark}
We observe that the sequence $\mathsf{D}_{\et,n}$ defined in Theorem \ref{th: limit PR} is not a sequence of \emph{regular} Entropy-Transport distances. The problem here is that the topology induced by the distance $\mathsf{BL}$ does not coincide with the weak topology, but it requires an additional tightness condition (see \cite[Theorem 3]{PR1} for all the details).
\end{remark}

\subsection{Bounds between distances}
The aim of this last short section is to give some explicit bounds between the distances discussed in the paper. 
\begin{proposition}
Let us denote by $\hk$, $\ghk$, $\mathsf{QPL}_p$ (for $1<p\le 3$) and $\mathsf{LPL}_p$ (for $p>1$) the regular Entropy-Transport distances defined in examples (\ref{def:HK}), (\ref{def:GHK}), (\ref{def:power-like}) and (\ref{def:linear power-like}) respectively. Accordingly, we denote by $\mathbf{D}_{\hk}$, $\mathbf{D}_{\ghk}$, $\mathbf{D}_{\mathsf{QPL}_p}$ and $\mathbf{D}_{\mathsf{LPL}_p}$ the induced Sturm-Entropy-Transport distances.
The following inequalities hold:
\begin{enumerate}
\item $\mathbf{D}_{\ghk}\le \mathbf{D}_{\hk}$.
\item $\mathbf{D}_{\mathsf{QPL}_p}\le \mathbf{D}_{\ghk}\le \sqrt{p}\,\mathbf{D}_{\mathsf{QPL}_p} \qquad 1<p\le 3$. 
\item $\mathbf{D}_{\mathsf{LPL}_p}\le \mathbf{PL}_p \qquad p>1.$

\vspace{2mm}
Moreover, for every regular entropy transport distance $\Det$ induced by $(1/p, F, \ell)$ where $p\ge 1$, $F\in \Gamma_0(\mathbb{R}_+)$, $\ell(d)=d^p$ we have:
\item $\DET\le \mathbf{D}_p \qquad p\ge 1.$
\end{enumerate}
\end{proposition}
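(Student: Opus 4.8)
The plan is to deduce all four inequalities from a single \emph{lifting principle}: if $\mathsf{d}_\flat$ and $\mathsf{d}_\sharp$ are two costs on $\Mi(\hat X)$, defined for every complete and separable metric space $(\hat X,\hat{\mathsf{d}})$, and $\mathsf{d}_\flat(\nu_1,\nu_2)\le C\,\mathsf{d}_\sharp(\nu_1,\nu_2)$ for all $\nu_1,\nu_2\in\Mi(\hat X)$ and all such $(\hat X,\hat{\mathsf{d}})$, then the associated Sturm-type distances, being infima of $\mathsf{d}_\flat$ (resp. $\mathsf{d}_\sharp$) over the \emph{same} family of ambient spaces and isometric embeddings, satisfy $\mathbf{D}_\flat\le C\,\mathbf{D}_\sharp$ on $\boldsymbol{\mathrm X}$, simply because $\inf_S f\le\inf_S(Cg)=C\inf_S g$ whenever $f\le Cg$ on $S$. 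All the distances appearing in the statement are of this infimal form: $\DET,\mathbf{D}_{\ghk},\mathbf{D}_{\mathsf{QPL}_p},\mathbf{D}_{\mathsf{LPL}_p}$ and $\mathbf{D}_p$ by Definition \ref{def: D-ET} and Definition \ref{def: D}, and $\mathbf{PL}_p$ by the identity \eqref{eq: conv to entropy distances}. Thus each item reduces to a comparison of the underlying Entropy-Transport costs on the measures of an arbitrary fixed ambient space.

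For this I would record two elementary monotonicity facts for the cost $\et$ of Definition \ref{def: primal formulation ET}, both immediate from $\mathcal{ET}(\boldsymbol\gamma\|\mu_1,\mu_2)=D_F(\gamma_1\|\mu_1)+D_F(\gamma_2\|\mu_2)+\int\boldsymbol{\mathrm c}\,\dd\boldsymbol\gamma$: (a) if $\boldsymbol{\mathrm c}\le\boldsymbol{\mathrm c}'$ pointwise, then $\et_{\boldsymbol{\mathrm c}}\le\et_{\boldsymbol{\mathrm c}'}$ (same entropy term, larger integrand); (b) if $F\le F'$ pointwise and $F'_\infty\le(F')'_\infty$, then $D_F\le D_{F'}$, hence $\et_F\le\et_{F'}$. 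Item (1) follows from (a) and $\ell_2(d)=d^2\le-\log(\cos^2 d)=\ell_{\hk}(d)$, which I would check by noting $g(d):=-\log(\cos^2 d)-d^2$ has $g(0)=0$ and $g'(d)=2(\tan d-d)\ge0$ on $[0,\pi/2)$ (and $\ell_{\hk}=+\infty$ for $d\ge\pi/2$); sharing $a=1/2$ and $F=U_1$, taking the power $1/2$ and lifting gives $\mathbf{D}_{\ghk}\le\mathbf{D}_{\hk}$. Item (3) follows from (a) and $\ell_1(d)=d\le\boldsymbol{\mathrm c}_\infty$, the pure-entropy cost equal to $0$ on the diagonal and $+\infty$ elsewhere: since $\mathsf{LPL}_p$ and $\mathsf{PL}_p$ share $a=1/2$, $F=U_p$ and differ only in the cost, we get $\mathsf{LPL}_p\le\mathsf{PL}_p$ on measures, hence $\mathbf{D}_{\mathsf{LPL}_p}\le\mathbf{PL}_p$. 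Item (4) follows from (b): any $F\in\Gamma_0(\R_+)$ with $F(1)=0$ satisfies $F\le I_1$ (the Sturm entropy of \eqref{scelte costo-entropia sturm}, which is $+\infty$ off $\{1\}$) and $F'_\infty\le+\infty=(I_1)'_\infty$; since the constraint $\gamma_i=\mu_i$ forced by $D_{I_1}$ reduces the problem to the Kantorovich one, $\et_{I_1}^{1/p}=\mathcal W_p$, and as both distances use $a=1/p$, $\ell=d^p$, lifting yields $\DET\le\mathbf{D}_p$ (the bound being trivial where $\mathbf{D}_p=+\infty$).

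The genuinely delicate point is item (2), where $\mathsf{QPL}_p$ and $\ghk$ share the \emph{same} cost $\ell_2(d)=d^2$ and power $a=1/2$ but differ in the entropy, $U_p$ versus $U_1$. Here (b) is of no help, because $U_p$ and $U_1$ do \emph{not} compare pointwise ($U_p(s)>U_1(s)$ for $s>1$ while $U_p(s)<U_1(s)$ for $s<1$), so neither $D_{U_p}\le D_{U_1}$ nor its reverse can hold for all $\gamma,\mu$. The plan is instead to pass to the conic formulation of Remark \ref{rem: conical formulation}: by \cite[Theorem 5.8]{LMS} the cost $\et$ is representable through the marginal perspective cost $H$ on $\mathfrak{C}(X)\times\mathfrak{C}(X)$ and is monotone with respect to pointwise inequalities of $H$. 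I would therefore reduce the two-sided estimate $\et_{U_p}\le\et_{U_1}\le p\,\et_{U_p}$ (equivalently, after the square root, $\mathsf{QPL}_p\le\ghk\le\sqrt p\,\mathsf{QPL}_p$) to the pointwise comparison of marginal perspective costs
\begin{equation*}
H^{U_p}_{c}(r,t)\le H^{U_1}_{c}(r,t)\le p\,H^{U_p}_{c}(r,t)\qquad\text{for all }r,t\ge0,\ c\ge0,\ 1<p\le3.
\end{equation*}

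This inequality is the analytic heart of the matter, and I expect it to be the main obstacle: it cannot be read off the entropies and must be obtained from the explicit form of $H_c$ produced by the scaling minimization $\inf_{\theta>0}\bigl(R(r/\theta)\theta+R(t/\theta)\theta+\theta c\bigr)$ for $R=R_{U_p}$ and $R=R_{U_1}$, precisely the kind of estimate carried out for the power-like family in \cite[Theorem 6 and Corollary 1]{DepIeee}. A useful sanity check along the way is the purely entropic case $c=0$, where the closed forms recalled after \eqref{eq: PE problem} give $H^{U_p}_{0}(r,t)=\tfrac1p\bigl[r+t-2^{\frac{p}{p-1}}(r^{1-p}+t^{1-p})^{\frac{1}{1-p}}\bigr]$ and $H^{U_1}_{0}(r,t)=(\sqrt r-\sqrt t)^2$; there the two-sided bound is readily verified directly, both inequalities becoming equalities at $r=t$ and the right one also at $t=0$, which confirms that the constant $p$ (hence $\sqrt p$ after the square root) is sharp and correctly placed.
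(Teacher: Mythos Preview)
Your proposal is correct and follows essentially the same approach as the paper: both reduce the Sturm-level inequalities to pointwise comparisons of the underlying Entropy-Transport costs on measures (your ``lifting principle''), using monotonicity in the cost for (1) and (3), monotonicity in the entropy via $F\le I_1$ for (4), and deferring the two-sided bound in (2) to the analysis of the power-like family in \cite{DepIeee}. The paper's proof is terser---it simply cites \cite[Section~7.8]{LMS} for (1), \cite[Proposition~7]{DepIeee} for (2), the monotonicity established in Theorem~\ref{th: limit entropy distances} for (3), and $F\le I_1$ for (4)---while you spell out the monotonicity mechanism and the verification $d^2\le -\log(\cos^2 d)$ explicitly; note in particular that the precise reference for (2) is Proposition~7 of \cite{DepIeee}, which already gives the measure-level inequality $\mathsf{QPL}_p\le\ghk\le\sqrt{p}\,\mathsf{QPL}_p$ directly, so you need not rederive the pointwise comparison of the marginal perspective costs yourself.
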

\begin{proof}
\begin{enumerate}
\item is a consequence of the bound proved in \cite[Section 7.8]{LMS}. 
\item follows by the corresponding inequality proved in \cite[Proposition 7]{DepIeee}.
\item has been shown along the lines of the  proof of Theorem \ref{th: limit entropy distances} (notice that $\mathbf{D}_{\mathsf{LPL}_p}$ equals $\mathbf{D}_{p,1}$ in the notation of that Theorem).
\item is a consequence of the explicit formulations of $\mathbf{D}_p$ and $\DET$, by noticing that for any $F\in \Gamma_0(\mathbb{R}_+)$ we have $F\le I_{1}$ where $I_{1}$ has been defined in \eqref{scelte costo-entropia sturm}.
\end{enumerate}
\end{proof}

\section{Comparison with conic Gromov-Wasserstein}\label{sec: CGW}
Let $\Det$ be a regular entropy transport distance induced by $(a,F,\ell)$. Recalling the construction introduced in Remark \ref{rem: conical formulation}, given $F$ and a number $c\ge 0$ we can associate to the Entropy-Transport problem a function $H_c:[0,+\infty)\times[0,+\infty)\rightarrow [0,+\infty]$ called  marginal perspective function. Moreover, for any complete and separable metric space $(X,\di)$ the function 
$$H:\mathfrak{C}(X)\times \mathfrak{C}(X)\rightarrow [0,+\infty], \qquad H([x,r];[y,s]):=H_{\ell(\di(x,y))}(r,s)\, ,$$
is such that $H^a$ is a distance on $\mathfrak{C}(X)$. In particular, we have
\begin{equation}\label{eq: triang H}
H^a_{\ell(w_3)}(r,t)\le H^a_{\ell(w_1)}(r,s)+H^a_{\ell(w_2)}(s,t)\, , \qquad \textrm{for any} \, r,s,t\in [0,\infty),
\end{equation}
and for any $w_1,w_2,w_3\in [0,\infty)$ such that there exists a complete and separable metric space $(X,\di)$ with $w_1=\di(x_1,x_2)$, $w_2=\di(x_2,x_3)$, $w_3=\di(x_1,x_3)$, $x_1,x_2,x_3\in X$.

We also recall that $H$ is positively $1$-homogeneous in the scalar variables, i.e. 
$$H_c(\lambda r,\lambda s)=\lambda H_c(r,s) \quad \textrm{for any} \ \lambda\ge0,\, c\ge 0,\, r,s\in [0,\infty).$$

Let $X$ be a metric space and fix $\bar{x}\in X$. We define the canonical projection $\mathfrak{p}:X\times [0,\infty)\rightarrow \mathfrak{C}(X)$ as $\mathfrak{p}(x,r)=[x,r].$ We also introduce the maps 
\begin{align}
&\mathsf{r}:\con(X)\rightarrow [0,+\infty), \hspace{2cm} \mathsf{r}[x,r]:=r, \\
&\mathsf{x}:\con(X)\rightarrow X, \hspace{2.9cm} \mathsf{x}[x,r]:=\begin{cases} x \ \textrm{if} \ r>0, \\
\bar{x} \ \textrm{if} \ r=0. \end{cases}
\end{align}

We denote by $\boldsymbol{\mathfrak{y}}=(\mathfrak{y}_1,\mathfrak{y}_2)=([x_1,r_1],[x_2,r_2])$ a point on $\mathfrak{C}(X_1)\times \mathfrak{C}(X_2)$, and we set $\mathsf{r}_i(\boldsymbol{\mathfrak{y}}):=\mathsf{r}(\mathfrak{y}_i)$, $\mathsf{x}_i(\boldsymbol{\mathfrak{y}}):=\mathsf{x}(\mathfrak{y}_i)$.

Given $p\ge 1$, the \emph{$p$-homogeneous marginals} of a measure $\boldsymbol{\alpha}\in\mathscr{M}(\mathfrak{C}(X_1)\times \mathfrak{C}(X_2))$ are defined as 
$$\mathfrak{h}^p_i(\boldsymbol{\alpha}):=(\mathsf{x}_i)_{\sharp}(\mathsf{r}_i^p\boldsymbol{\alpha}), \qquad i=1,2.$$
\\Following the approach of \cite{SVP}, we define the conic Gromov-Wasserstein distance $\mathrm{CGW}^a$ between two metric measure spaces $(X_1,\mathsf{d}_1,\mu_1)$, $(X_2,\mathsf{d}_2,\mu_2)$, as the power $a$ of \begin{equation}\label{def: CGW}
\mathrm{CGW}\big((X_1,\mathsf{d}_1,\mu_1),(X_2,\mathsf{d}_2,\mu_2)\big):=\inf_{\boldsymbol{\alpha}\in \mathcal{U}_p(\mu_1,\mu_2)}\mathcal{H}(\boldsymbol{\alpha})
\end{equation}
where
\begin{equation}
\mathcal{H}(\boldsymbol{\alpha}):=\int\int H_{\ell(|\mathsf{d}_1(x,x')-\mathsf{d}_2(y,y')|)}\big((rr')^p,(ss')^p\big)\,\dd \boldsymbol{\alpha}([x,r],[y,s])\,\dd \boldsymbol{\alpha}([x',r'],[y',s']),
\end{equation}
and 
\begin{equation}
\mathcal{U}_p(\mu_1,\mu_2):=\Big\{\boldsymbol{\alpha}\in \mathscr{M}(\mathfrak{C}(X_1)\times\mathfrak{C}(X_2))\, : \, \mathfrak{h}^p_i(\boldsymbol{\alpha})=\mu_i,\, i=1,2\Big\}.
\end{equation}
In \cite{SVP}, the following main result has been obtained:
\begin{theorem}[{\cite[Theorem 1]{SVP}}]
If $H^a$ is a distance on the cone then the conic Gromov-Wasserstein distance $\mathrm{CGW}^a$ is a metric on $\boldsymbol{\mathrm{X}}.$
\end{theorem}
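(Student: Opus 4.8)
I would establish, in turn, that $\mathrm{CGW}^a$ is symmetric and finite (with value independent of the chosen representatives of the isomorphism classes), that it vanishes exactly on pairs of isomorphic metric measure spaces, and that it satisfies the triangle inequality. Symmetry is immediate from the symmetry of $H_c$ and of $\mathcal H(\boldsymbol\alpha)$ in the two copies of $\boldsymbol\alpha$. Finiteness: $\mathcal U_p(\mu_1,\mu_2)\neq\emptyset$ whenever $\mu_1(X_1),\mu_2(X_2)>0$ --- for instance the image of $\mu_1\otimes\mu_2$ under $(x,y)\mapsto\bigl([x,\mu_2(X_2)^{-1/p}],[y,\mu_1(X_1)^{-1/p}]\bigr)$ lies in it --- and since $c\mapsto H_c$ is nondecreasing with supremum $H_\infty(r_1,r_2)=F(0)(r_1+r_2)$ (finite, as $F$ is finite valued), on any $\boldsymbol\alpha\in\mathcal U_p(\mu_1,\mu_2)$ one gets, using $\int\mathsf r_i^p\,\dd\boldsymbol\alpha=\mu_i(X_i)$, that $\mathcal H(\boldsymbol\alpha)\le F(0)\bigl(\mu_1(X_1)^2+\mu_2(X_2)^2\bigr)<\infty$; the cases where some $\mu_i$ is null (a pair isomorphic iff both measures are null) are treated directly. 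Independence of representatives holds because $\mathcal H$ depends on the data only through $\mathsf d_i$ on $\mathsf{supp}(\mu_i)$ and through $\mathfrak h_i^p(\boldsymbol\alpha)=\mu_i$, and a measure-preserving isometry induces a bijection of $\mathcal U_p$ preserving $\mathcal H$.

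\textbf{Identity of indiscernibles.} If the spaces are isomorphic via $\psi$, then $\boldsymbol\alpha:=\bigl(x\mapsto([x,1],[\psi(x),1])\bigr)_\sharp\mu_1$ lies in $\mathcal U_p(\mu_1,\mu_2)$ and has $\mathcal H(\boldsymbol\alpha)=0$, so $\mathrm{CGW}=0$. Conversely, assume $\mathrm{CGW}=0$ (and $\mu_1,\mu_2\neq0$) and pick $\boldsymbol\alpha_n\in\mathcal U_p(\mu_1,\mu_2)$ with $\mathcal H(\boldsymbol\alpha_n)\to0$. Since a set $E=\{\mathsf r_1>0,\ \mathsf r_2=0\}$ satisfies $\mathcal H(\boldsymbol\alpha_n)\ge F(0)\bigl((\mathsf r_1^p\boldsymbol\alpha_n)(E)\bigr)^2$ (self-paired points of $E$ have strictly positive cost, using $H_0(u,0)=F(0)u$), the part of $\mu_1$ carried over a vanishing radius on the other side disappears along the sequence, and symmetrically for $\mu_2$; moreover, where all radii are positive, the cost penalises both $\mathsf d_1(x,x')\neq\mathsf d_2(y,y')$ and $(rr')^p\neq(ss')^p$ (since $H_c(u,v)=0$ with $c=0$ forces $u=v$ by the cone-distance property of $H^a$ together with $\ell(d)=0\Leftrightarrow d=0$), which upon fixing a typical second point makes the ratios $r/s$ asymptotically equal to a constant that must be its own reciprocal, i.e.\ $\mathsf r_1\approx\mathsf r_2$. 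One thus reduces to a sequence $\boldsymbol\gamma_n\in\mathscr M(X_1\times X_2)$ whose marginals converge to $\mu_1,\mu_2$ and with $\iint\ell\bigl(|\mathsf d_1(x,x')-\mathsf d_2(y,y')|\bigr)\,\dd\boldsymbol\gamma_n\,\dd\boldsymbol\gamma_n\to0$; the $\boldsymbol\gamma_n$ being tight (bounded with approximately fixed marginals, Prokhorov), a subsequential limit $\boldsymbol\gamma$ is a genuine coupling of $\mu_1,\mu_2$ with $\mathsf d_1(x,x')=\mathsf d_2(y,y')$ $\boldsymbol\gamma\otimes\boldsymbol\gamma$-a.e., whence the Gromov--Wasserstein rigidity theorem (cf.\ \cite{Memoli1}) yields an isomorphism.

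\textbf{Triangle inequality.} Fix three spaces and $\varepsilon>0$, and pick near-optimal $\boldsymbol\alpha_{12}\in\mathcal U_p(\mu_1,\mu_2)$, $\boldsymbol\alpha_{23}\in\mathcal U_p(\mu_2,\mu_3)$. Since $\mathfrak h_2^p(\boldsymbol\alpha_{12})=\mu_2=\mathfrak h_2^p(\boldsymbol\alpha_{23})$, disintegrate $\mathsf r_2^p\boldsymbol\alpha_{12}$ and $\mathsf r_2^p\boldsymbol\alpha_{23}$ over $X_2$ and glue the fibres --- after coupling their radial parts and rescaling so that the two ``middle'' radii coincide --- into $\boldsymbol\alpha_{123}\in\mathscr M\bigl(\mathfrak C(X_1)\times\mathfrak C(X_2)\times\mathfrak C(X_3)\bigr)$ whose $(1,2)$- and $(2,3)$-projections are $\boldsymbol\alpha_{12}$ and $\boldsymbol\alpha_{23}$ and whose $i$-th homogeneous marginal is $\mu_i$; let $\boldsymbol\alpha_{13}$ be its $(1,3)$-projection, so $\boldsymbol\alpha_{13}\in\mathcal U_p(\mu_1,\mu_3)$. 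Writing $G_{ij}$ for the integrand of $\mathcal H$ read off the coordinates $i,j$ of each of the two points (so that $\mathcal H(\boldsymbol\alpha_{ij})=\int G_{ij}\,\dd(\boldsymbol\alpha_{123}\otimes\boldsymbol\alpha_{123})$ since the integrand ignores the remaining coordinate), apply \eqref{eq: triang H} with $w_1=|\mathsf d_1-\mathsf d_2|$, $w_2=|\mathsf d_2-\mathsf d_3|$, $w_3=|\mathsf d_1-\mathsf d_3|$ --- the pairwise distances of three points of $\mathbb R$, hence realised in a complete separable metric space --- and scalar arguments $(rr')^p,(ss')^p,(tt')^p$ to get $G_{13}^a\le G_{12}^a+G_{23}^a$ pointwise. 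Then, by Minkowski's inequality in $L^{1/a}(\boldsymbol\alpha_{123}\otimes\boldsymbol\alpha_{123})$ (valid since $1/a\ge1$),
\begin{equation*}
\mathcal H(\boldsymbol\alpha_{13})^a=\bigl\|G_{13}^a\bigr\|_{L^{1/a}}\le\bigl\|G_{12}^a+G_{23}^a\bigr\|_{L^{1/a}}\le\bigl\|G_{12}^a\bigr\|_{L^{1/a}}+\bigl\|G_{23}^a\bigr\|_{L^{1/a}}=\mathcal H(\boldsymbol\alpha_{12})^a+\mathcal H(\boldsymbol\alpha_{23})^a,
\end{equation*}
so $\mathrm{CGW}^a(\mu_1,\mu_3)\le\mathcal H(\boldsymbol\alpha_{13})^a\le\bigl(\mathrm{CGW}(\mu_1,\mu_2)+\varepsilon\bigr)^a+\bigl(\mathrm{CGW}(\mu_2,\mu_3)+\varepsilon\bigr)^a$, and letting $\varepsilon\to0$ concludes.

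\textbf{Main obstacle.} The two genuinely delicate steps are the gluing on the cones in the triangle inequality and the reduction to a genuine coupling in the non-degeneracy argument. For the gluing, the fibres of $\mathsf r_2^p\boldsymbol\alpha_{12}$ and $\mathsf r_2^p\boldsymbol\alpha_{23}$ over a point of $X_2$ generally carry different radial distributions, so they cannot be concatenated directly: one must couple these distributions and then absorb the radial mismatch --- exploiting the $1$-homogeneity of $H$ in its scalar slots --- so that the two middle radii agree, all the while checking that the homogeneous marginals $\mathfrak h_1^p$ and $\mathfrak h_3^p$ are unchanged. For the non-degeneracy, the subtlety is that mass near the apex of a cone is invisible to the homogeneous marginals, so one has to show quantitatively that the cost $\mathcal H$ controls the deviation of $\boldsymbol\alpha_n$ from a genuine coupling of $\mu_1$ and $\mu_2$ (both the ``zero-radius on one side'' mass and the discrepancy $r/s\not\equiv\mathrm{const}$) before passing to the limit.
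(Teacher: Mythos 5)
The paper does not actually prove this statement: it is quoted verbatim from \cite[Theorem 1]{SVP}, so there is no internal argument to compare yours with. Judged on its own, your skeleton is the right one (symmetry; finiteness via a product-type plan and $H_\infty(r_1,r_2)=F(0)(r_1+r_2)$; vanishing on isomorphic pairs; pointwise cone triangle inequality \eqref{eq: triang H} followed by Minkowski in $L^{1/a}$), and you correctly locate the two delicate points. But those two points are precisely where the theorem lives, and in both places what you write is a programme, not a proof.

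Concretely: (a) \emph{Gluing.} The projections of $\boldsymbol{\alpha}_{12}$ and $\boldsymbol{\alpha}_{23}$ onto $\mathfrak{C}(X_2)$ need not coincide --- only their $p$-homogeneous marginals equal $\mu_2$ --- so the usual gluing lemma does not apply. One must first replace each plan by a dilated plan, e.g. $([x,r],[y,s])\mapsto([x,r/\theta],[y,s/\theta])$ with density $\theta^p$, which preserves both $\mathcal{H}$ (joint $1$-homogeneity of $H_c$ in the scalar slots) and the homogeneous marginals, normalise the middle radius, and handle the mass sitting at the apex of $\mathfrak{C}(X_2)$, which is invisible to $\mathfrak{h}^p_2$ and is destroyed when you disintegrate $\mathsf{r}_2^p\boldsymbol{\alpha}$; only after this can one build $\boldsymbol{\alpha}_{123}$ with the properties you use, and without it the identity $\mathcal{H}(\boldsymbol{\alpha}_{ij})=\int G_{ij}\,\dd(\boldsymbol{\alpha}_{123}\otimes\boldsymbol{\alpha}_{123})$ is unjustified. (b) \emph{Non-degeneracy.} The passage from $\mathcal{H}(\boldsymbol{\alpha}_n)\to 0$ to plans $\boldsymbol{\gamma}_n\in\mathscr{M}(X_1\times X_2)$ whose marginals tend to $\mu_1,\mu_2$ and with $\iint\ell\bigl(|\di_1-\di_2|\bigr)\dd\boldsymbol{\gamma}_n\dd\boldsymbol{\gamma}_n\to 0$ rests on the sentence about the ratios $r/s$ being ``asymptotically equal to a constant that must be its own reciprocal'', which is not an argument: for a general admissible $F$ you only know $H_0(u,v)>0$ for $u\neq v$, so you need either a quantitative coercivity of $H_0$ (controlling the discrepancy of the homogeneous marginals by the cost) or a compactness argument on $\mathfrak{C}(X_1)\times\mathfrak{C}(X_2)$; and compactness fails without a radial renormalisation, since mass can escape to infinite radius while the homogeneous marginals stay fixed. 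A minor further slip: the three numbers $|\di_1-\di_2|$, $|\di_2-\di_3|$, $|\di_1-\di_3|$ satisfy all three triangle inequalities and are therefore realised as the mutual distances of three points of $\mathbb{R}^2$ (a possibly degenerate Euclidean triangle), not of $\mathbb{R}$; with that correction your use of \eqref{eq: triang H} is fine.
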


As we will see, it is possible to prove an inequality between the conic Gromov-Wasserstein distance and the Sturm-Entropy-Transport distance. We start with a lemma.
\begin{lemma}\label{lem: opt conical coupling}
Let $\Det$ be a regular entropy transport distance induced by $(a,F,\ell)$ and let $p\ge 1$. Let $(X_1,\di_1)$, $(X_2,\di_2)$ be two complete and separable metric spaces and let $\hat{\di}$ be a pseudo-metric coupling between $\di_1$ and $\di_2$. For any $\mu_1\in \mathscr{M}(X_1)$ and $\mu_2\in \mathscr{M}(X_2)$ we have
\begin{align}\label{eq: H equiv ET}
\begin{split}
\min_{\boldsymbol{\alpha}\in\mathcal{U}_p(\mu_1,\mu_2)}&\int_{\mathfrak{C}(X_1)\times\mathfrak{C}(X_2)} H_{\ell(\hat{\di}(x,y))}(r^p,s^p)\,\dd \boldsymbol{\alpha}([x,r],[y,s])\\
&=\min_{\boldsymbol{\gamma}\in\mathscr{M}(X_1\times X_2)}\sum_{i=1}^2D_F(\gamma_i||\mu_i)+\int_{X_1\times X_2}\ell\big(\hat{\mathsf{d}}(x,y)\big)\dd\boldsymbol{\gamma}.
\end{split}
\end{align}
In particular, for any pair of metric measure spaces $(X_1,\mathsf{d}_1,\mu_1)$, $(X_2,\mathsf{d}_2,\mu_2)$ there exists $\boldsymbol{\alpha}\in \mathcal{U}_p(\mu_1,\mu_2)$ such that
\begin{equation}\label{eq: opt H-DET}
\int_{\mathfrak{C}(X_1)\times\mathfrak{C}(X_2))} H_{\ell(\hat{\di}(x,y))}(r^p,s^p)\,\dd \boldsymbol{\alpha}([x,r],[y,s])=\DET^{1/a}((X_1,\mathsf{d}_1,\mu_1),(X_2,\mathsf{d}_2,\mu_2))\, ,
\end{equation}
where $\hat{\di}$ is an optimal pseudo-metric coupling for $\DET((X_1,\mathsf{d}_1,\mu_1),(X_2,\mathsf{d}_2,\mu_2))$.
\end{lemma}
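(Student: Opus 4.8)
The plan is to recognise \eqref{eq: H equiv ET} as the \emph{conical} (homogeneous-marginal) reformulation of the Entropy-Transport problem due to Liero--Mielke--Savaré, to reduce it to the exact form in which that reformulation is available, and then to read off the ``In particular'' statement from Lemma~\ref{lem:esistenza ottimo}. I would first normalise the exponent: the Borel bijection $\Theta_p\colon\mathfrak{C}(X_1)\times\mathfrak{C}(X_2)\to\mathfrak{C}(X_1)\times\mathfrak{C}(X_2)$ defined (with the apex convention) by $\Theta_p([x,r],[y,s]):=([x,r^p],[y,s^p])$ leaves the $X_i$-components untouched and, by the change of variables $u=r^p$, $v=s^p$, satisfies $\mathfrak{h}^1_i\big((\Theta_p)_\sharp\boldsymbol{\alpha}\big)=\mathfrak{h}^p_i(\boldsymbol{\alpha})$ and
\[
\int H_{\ell(\hat{\di}(x,y))}(r,s)\,\dd (\Theta_p)_\sharp\boldsymbol{\alpha}=\int H_{\ell(\hat{\di}(x,y))}(r^p,s^p)\,\dd\boldsymbol{\alpha}
\]
for every $\boldsymbol{\alpha}$. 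Hence the left-hand minimisation in \eqref{eq: H equiv ET} is unchanged if $\mathfrak{h}^p_i$ and $H_c(r^p,s^p)$ are replaced throughout by $\mathfrak{h}^1_i$ and $H_c(r,s)$, and from now on I take $p=1$.

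Next I would normalise to a single underlying space. Let $(\tilde{X},\tilde{\di})$ be the complete separable metric space $(X_1\sqcup X_2)/{\sim}$ produced from $\hat{\di}$ by Lemma~\ref{lem: quotient disjoint union is metric}, with the induced isometric embeddings $\psi^i\colon X_i\to\tilde{X}$, so that $\hat{\di}(x,y)=\tilde{\di}(\psi^1(x),\psi^2(y))$ for $x\in X_1$, $y\in X_2$; put $\tilde{\mu}_i:=\psi^i_\sharp\mu_i$. On the entropic side Lemma~\ref{lem: invariance divergence under injection} gives $D_F(\gamma_i||\mu_i)=D_F(\psi^i_\sharp\gamma_i||\tilde{\mu}_i)$, and any competitor $\boldsymbol{\gamma}$ of finite value has $\gamma_i\ll\tilde{\mu}_i$, hence is concentrated on $\psi^1(X_1)\times\psi^2(X_2)$ and pulls back to $\mathscr{M}(X_1\times X_2)$; so the right-hand minimisation in \eqref{eq: H equiv ET} equals the Entropy-Transport cost $\et_{\tilde{\di}}(\tilde{\mu}_1,\tilde{\mu}_2)$ computed in $(\tilde{X},\tilde{\di})$ with cost $\ell(\tilde{\di}(\cdot,\cdot))$. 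Likewise the maps $[x,r]\mapsto[\psi^i(x),r]$ are injective off the apex and preserve $1$-homogeneous marginals, and $H_c$ at a cone point of zero radius is independent of its base-point coordinate, so the left-hand minimisation in \eqref{eq: H equiv ET} equals the analogous conic minimisation over $\mathcal{U}_1(\tilde{\mu}_1,\tilde{\mu}_2)$ on $\mathfrak{C}(\tilde{X})\times\mathfrak{C}(\tilde{X})$ with cost $H_{\ell(\tilde{\di}(x,y))}(r,s)$; this last identification is a routine verification using the apex convention.

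At this point everything reduces to the conical formulation of the Entropy-Transport problem: for the complete separable space $(\tilde{X},\tilde{\di})$ and the lower semicontinuous cost $(x,y)\mapsto\ell(\tilde{\di}(x,y))$ (lower semicontinuity being clear since $\ell$ is continuous and $\tilde{\di}$ a metric), \cite[Theorem~5.8]{LMS} identifies $\et_{\tilde{\di}}(\tilde{\mu}_1,\tilde{\mu}_2)$ with precisely this conic minimisation over $\mathcal{U}_1(\tilde{\mu}_1,\tilde{\mu}_2)$, while the cone-space analysis of \cite[Section~7]{LMS} shows that the minimum is attained; the standing hypotheses ($F\in\Gamma_0(\R_+)$ superlinear and finite valued) are exactly those needed, and the right-hand problem is feasible since $F$ is finite valued (so the minimum there is also attained, e.g. by Proposition~\ref{prop: proprieta ET}). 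Combining the two normalisations with this identity yields \eqref{eq: H equiv ET} with the minimum attained on both sides. I expect this to be the one genuinely delicate point: the content of the conical formulation is that $H_c$ is the \emph{lower semicontinuous envelope} of $\tilde{H}_c(r_1,r_2)=\inf_{\theta>0}\big(\hat{F}(\theta,r_1)+\hat{F}(\theta,r_2)+\theta c\big)$ rather than $\tilde{H}_c$ itself, so that passing between a transport plan $\boldsymbol{\gamma}$ and a conic plan $\boldsymbol{\alpha}$ requires a measurable selection of the optimal scale $\theta$ together with a relaxation argument; since all of this is carried out in \cite{LMS}, I would cite it rather than reprove it.

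Finally, for the ``In particular'' assertion, I would apply \eqref{eq: H equiv ET} taking $\hat{\di}$ to be an \emph{optimal} pseudo-metric coupling for $\DET\big((X_1,\mathsf{d}_1,\mu_1),(X_2,\mathsf{d}_2,\mu_2)\big)$, whose existence is part of Lemma~\ref{lem:esistenza ottimo}(i); for that choice the right-hand side of \eqref{eq: H equiv ET} equals $\DET^{1/a}\big((X_1,\mathsf{d}_1,\mu_1),(X_2,\mathsf{d}_2,\mu_2)\big)$ by the same lemma, and, the left-hand minimum being attained, one obtains a measure $\boldsymbol{\alpha}\in\mathcal{U}_p(\mu_1,\mu_2)$ realising \eqref{eq: opt H-DET}.
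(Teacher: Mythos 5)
Your proposal is correct and follows essentially the same route as the paper: both reduce \eqref{eq: H equiv ET} to the homogeneous/conical reformulation of the Entropy-Transport problem in \cite[Theorem 5.8]{LMS} together with the passage to the cone from \cite[Section 7]{LMS}, and both obtain \eqref{eq: opt H-DET} by combining this with the optimal pseudo-metric coupling from Lemma \ref{lem:esistenza ottimo}(i). Your extra normalisations (the rescaling $\Theta_p$ to reduce to $p=1$ and the quotient-space reduction via Lemma \ref{lem: quotient disjoint union is metric}) just spell out the bookkeeping that the paper dismisses as ``the general case follows straightforwardly''.
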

\begin{proof}
Notice that under our assumptions we are in the basic \emph{coercive} setting described in \cite[Section 3.1]{LMS}. The equality stated in \eqref{eq: H equiv ET}, corresponding to the equivalence between the homogeneous formulation based on the function $H$ and the primal Entropy-Transport formulation, is thus a consequence of \cite[Theorem 5.8 $(iii)$]{LMS} and the use of the projection map 
$$\boldsymbol{\mathfrak{p}}:X_1\times \mathbb{R}_{+}\times X_2\times \mathbb{R}_{+}\rightarrow \mathfrak{C}(X_1)\times\mathfrak{C}(X_2), \quad \boldsymbol{\mathfrak{p}}(x,r,y,s):=(\mathfrak{p}\otimes\mathfrak{p})\big((x,r),(y,s)\big)=([x,r],[y,s])$$
for passing to the cone (see \cite[Section 7]{LMS} for all the details in the case of the Hellinger-Kantorovich distance, the general case follows straightforwardly).

Once \eqref{eq: H equiv ET} has been proved, \eqref{eq: opt H-DET} can be deduced by recalling the characterization of $\DET$ given in the point $(i)$ of Lemma \ref{lem:esistenza ottimo}.
\end{proof}

We can now state the main result of this section. It should be compared with \cite[Theorem 5.1]{Memoli1}, \cite[Proposition 2.6]{St3}.
\begin{proposition}
Let $\Det$ be a regular entropy transport distance induced by $(a,F,\ell)$ and let $\mathrm{CGW}^a$ be the associated conic Gromov-Wasserstein distance. For any pair of metric measure spaces $(X_1,\mathsf{d}_1,\mu_1)$, $(X_2,\mathsf{d}_2,\mu_2)$, it holds
\begin{equation}
\mathrm{CGW}^a\big((X_1,\mathsf{d}_1,\mu_1),(X_2,\mathsf{d}_2,\mu_2)\big)\le \big(\mu_1(X_1)^a+\mu_2(X_2)^a\big)\,\DET\big((X_1,\mathsf{d}_1,\mu_1),(X_2,\mathsf{d}_2,\mu_2)\big).
\end{equation} 
\end{proposition}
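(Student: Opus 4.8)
The plan is to bound $\mathrm{CGW}$ from above by testing the infimum in \eqref{def: CGW} against the particular conic coupling $\boldsymbol{\alpha}$ produced by Lemma~\ref{lem: opt conical coupling}, and then to estimate the Gromov-type energy $\mathcal{H}(\boldsymbol{\alpha})$ by $\DET^{1/a}$ by combining the triangle inequality \eqref{eq: triang H} for $H^a$ on the cone, the scalar $1$-homogeneity of $H$, and Minkowski's inequality. We may assume $\DET\big((X_1,\di_1,\mu_1),(X_2,\di_2,\mu_2)\big)<\infty$, otherwise there is nothing to prove.

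First I would invoke Lemma~\ref{lem: opt conical coupling}: it furnishes $\boldsymbol{\alpha}\in\mathcal{U}_p(\mu_1,\mu_2)$ together with an optimal pseudo-metric coupling $\hat{\di}$ for $\DET$ such that
\[
\int_{\mathfrak{C}(X_1)\times\mathfrak{C}(X_2)} H_{\ell(\hat{\di}(x,y))}(r^p,s^p)\,\dd\boldsymbol{\alpha}([x,r],[y,s])=\DET^{1/a}\big((X_1,\di_1,\mu_1),(X_2,\di_2,\mu_2)\big).
\]
Since $\mathrm{CGW}\le\mathcal{H}(\boldsymbol{\alpha})$ by definition, it suffices to estimate $\mathcal{H}(\boldsymbol{\alpha})$. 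The starting point is that $\hat{\di}$ restricts to $\di_1$ on $X_1$, to $\di_2$ on $X_2$, and obeys the triangle inequality on $X_1\sqcup X_2$, so that for $x,x'\in X_1$, $y,y'\in X_2$,
\[
|\mathsf{d}_1(x,x')-\mathsf{d}_2(y,y')|=|\hat{\di}(x,x')-\hat{\di}(y,y')|\le\hat{\di}(x,y)+\hat{\di}(x',y').
\]
Using that $\ell$ is increasing and that $c\mapsto H_c$ is nondecreasing (immediate from the definition of $\tilde{H}_c$ as an infimum of functions affine and nondecreasing in $c$, a property preserved under lower semicontinuous relaxation and under passage to $c=+\infty$), this yields the pointwise bound $H_{\ell(|\mathsf{d}_1(x,x')-\mathsf{d}_2(y,y')|)}\big((rr')^p,(ss')^p\big)\le H_{\ell(\hat{\di}(x,y)+\hat{\di}(x',y'))}\big((rr')^p,(ss')^p\big)$.

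Next I would apply \eqref{eq: triang H} to the triple $(w_1,w_2,w_3)=\big(\hat{\di}(x,y),\hat{\di}(x',y'),\hat{\di}(x,y)+\hat{\di}(x',y')\big)$, which is realized by three collinear points of $\mathbb{R}$, with the intermediate scalar value $(r's)^p$, and then use the positive $1$-homogeneity of $H$ in the scalar variables to get
\[
H_{\ell(\hat{\di}(x,y)+\hat{\di}(x',y'))}\big((rr')^p,(ss')^p\big)\le\Big((r')^{pa}\,H^a_{\ell(\hat{\di}(x,y))}(r^p,s^p)+s^{pa}\,H^a_{\ell(\hat{\di}(x',y'))}\big((r')^p,(s')^p\big)\Big)^{1/a}.
\]
Integrating this pointwise estimate against $\dd\boldsymbol{\alpha}\otimes\dd\boldsymbol{\alpha}$ and writing $P$, $Q$ for the two summands inside the bracket, I would then apply Minkowski's inequality in $L^{1/a}(\boldsymbol{\alpha}\otimes\boldsymbol{\alpha})$ (legitimate since $1/a\ge1$): since $P$ factors as a function of the second variable times a function of the first variable, Fubini together with $\mathfrak{h}^p_1(\boldsymbol{\alpha})=\mu_1$ and \eqref{eq: opt H-DET} give $\|P\|_{L^{1/a}(\boldsymbol{\alpha}\otimes\boldsymbol{\alpha})}^{1/a}=\mu_1(X_1)\,\DET^{1/a}$, and symmetrically $\|Q\|_{L^{1/a}(\boldsymbol{\alpha}\otimes\boldsymbol{\alpha})}^{1/a}=\mu_2(X_2)\,\DET^{1/a}$. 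Hence
\[
\mathcal{H}(\boldsymbol{\alpha})\le\big(\|P\|_{L^{1/a}}+\|Q\|_{L^{1/a}}\big)^{1/a}=\Big(\big(\mu_1(X_1)^a+\mu_2(X_2)^a\big)\,\DET\Big)^{1/a},
\]
and raising to the power $a$ gives $\mathrm{CGW}^a\le\big(\mu_1(X_1)^a+\mu_2(X_2)^a\big)\,\DET$.

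The main obstacle is keeping the multiplicative constant sharp. The conic triangle inequality \eqref{eq: triang H} forces the third side of the triangle to be at least $w_1+w_2$ — there need not exist a genuine metric triangle with sides $w_1,w_2$ and third side $|\mathsf{d}_1(x,x')-\mathsf{d}_2(y,y')|$ — so some loss is unavoidable; estimating bluntly by $(P+Q)^{1/a}\le2^{1/a-1}(P^{1/a}+Q^{1/a})$ would only yield the weaker bound $\mathrm{CGW}^a\le2^{1-a}\big(\mu_1(X_1)+\mu_2(X_2)\big)^a\DET$. Obtaining the clean form $\mu_1(X_1)^a+\mu_2(X_2)^a$ requires the Minkowski argument above, together with the precise choice of the intermediate scalar value $(r's)^p$, which is tuned so that the $1$-homogeneity of $H$ splits the energy into one factor carrying the mass $\mu_i(X_i)$ and one factor reproducing $\DET^{1/a}$.
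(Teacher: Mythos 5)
Your proposal is correct and follows essentially the same route as the paper's proof: the same conic coupling $\boldsymbol{\alpha}$ from Lemma \ref{lem: opt conical coupling} together with the optimal pseudo-metric coupling $\hat{\di}$, the same application of \eqref{eq: triang H} with intermediate scalar value $(r's)^p$, then homogeneity of $H$ and Minkowski in $L^{1/a}$, and the $p$-homogeneous marginal constraints to produce the factors $\mu_1(X_1)^a$ and $\mu_2(X_2)^a$. The only differences are cosmetic (justifying the triangle inequality via collinear points in $\R$ instead of the paper's explicit three-point metric space, and spelling out the monotonicity of $c\mapsto H_c$).
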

\begin{proof}
By taking advantage of Lemma \ref{lem:esistenza ottimo}, let us consider an optimal pseudo-metric coupling $\hat{\di}$ for $\DET\big((X_1,\mathsf{d}_1,\mu_1),(X_2,\mathsf{d}_2,\mu_2)\big)$. Thanks to Lemma \ref{lem: opt conical coupling}, let us also consider $\boldsymbol{\alpha}\in \mathcal{U}_p(\mu_1,\mu_2)$ satisfying
\begin{equation}
\int H_{\ell(\hat{\di}(x,y))}(r^p,s^p)\,\dd \boldsymbol{\alpha}([x,r],[y,s])=\DET^{1/a}((X_1,\mathsf{d}_1,\mu_1),(X_2,\mathsf{d}_2,\mu_2)).
\end{equation}
To shorten the notation, let us set $\boldsymbol{\mathfrak{y}}:=([x,r],[y,s])$ and $\boldsymbol{\mathfrak{y}}':=([x',r'],[y',s'])$. 
By the triangle inequality for $\hat{\di}$, and the fact that $\ell(\cdot)$ and $H_{\cdot}(r,t)$ are increasing for any $r,t$, we have
\begin{align}\label{dis: CGW-DET}
\nonumber&\mathrm{CGW}^a\big((X_1,\mathsf{d}_1,\mu_1),(X_2,\mathsf{d}_2,\mu_2)\big) \\
\nonumber&\le\bigg(\int\int H_{\ell(\hat{\di}(x,y)+\hat{\di}(x',y'))}\big((rr')^p,(ss')^p\big)\,\dd \boldsymbol{\alpha}(\boldsymbol{\mathfrak{y}})\,\dd \boldsymbol{\alpha}(\boldsymbol{\mathfrak{y}}')\bigg)^a \\
\nonumber&\le\bigg(\int\int \Big[H^a_{\ell(\hat{\di}(x,y))}\big((rr')^p,(r's)^p\big)+H^a_{\ell(\hat{\di}(x',y'))}\big((r's)^p,(ss')^p\big)\Big]^{\frac{1}{a}}\dd \boldsymbol{\alpha}(\boldsymbol{\mathfrak{y}})\,\dd \boldsymbol{\alpha}(\boldsymbol{\mathfrak{y}}')\bigg)^a \\
\nonumber&\le \bigg(\int\int H_{\ell(\hat{\di}(x,y))}(r^p,s^p)(r')^p\,\dd \boldsymbol{\alpha}(\boldsymbol{\mathfrak{y}})\,\dd \boldsymbol{\alpha}(\boldsymbol{\mathfrak{y}}')\bigg)^a+\\
&\hspace{4cm} \bigg(\int\int H_{\ell(\hat{\di}(x',y'))}\big((r')^p,(s')^p\big)s^p\,\dd \boldsymbol{\alpha}(\boldsymbol{\mathfrak{y}})\,\dd\boldsymbol{\alpha}(\boldsymbol{\mathfrak{y}}')\bigg)^a
\end{align}
where we have also used the Minkowski inequality and the homogeneity of $H$ in the last passage, and we have taken advantage of \eqref{eq: triang H} with $w_3=\hat{\di}(x,y)+\hat{\di}(x',y')$, $w_1=\hat{\di}(x,y)$ and $w_2=\hat{\di}(x',y')$. To justify the use of \eqref{eq: triang H} we can argue as follows: if $\hat{\di}(x,y)=0$ or $\hat{\di}(x',y')=0$ there is nothing to prove, otherwise we notice that the three points metric space $(\{A,B,C\},\tilde{\di})$ with mutual distances between different points defined as $\tilde{\di}(A,B):=\hat{\di}(x,y)$, $\tilde{\di}(B,C):=\hat{\di}(x',y')$, $\tilde{\di}(A,C):=\hat{\di}(x,y)+\hat{\di}(x',y')$ is indeed a complete and separable metric space for any $x,x'\in X_1$, $y,y'\in X_2$.
\\ Using the definition of $\boldsymbol{\alpha}\in \mathcal{U}_p(\mu_1,\mu_2)$, we can now perform the integrals in \eqref{dis: CGW-DET} obtaining
\begin{equation*}
\begin{aligned}
& \int\int H_{\ell(\hat{\di}(x,y))}(r^p,s^p)(r')^p\,\dd \boldsymbol{\alpha}([x,r],[y,s])\,\dd \boldsymbol{\alpha}([x',r'],[y',s'])\\
&\qquad  \qquad =\mu_1(X)\int H_{\ell(\hat{\di}(x,y))}(r^p,s^p)\,\dd \boldsymbol{\alpha}([x,r],[y,s])
\\
&\qquad  \qquad = \mu_1(X_1)\,\DET^{1/a}((X_1,\mathsf{d}_1,\mu_1),(X_2,\mathsf{d}_2,\mu_2))\, ,
\end{aligned}
\end{equation*} 
and similarly 
\begin{equation*}
\begin{aligned}
& \int\int H_{\ell(\hat{\di}(x',y'))}\big((r')^p,(s')^p\big)s^p\,\dd \boldsymbol{\alpha}([x,r],[y,s])\,\dd\boldsymbol{\alpha}([x',r'],[y',s'])\\
&\qquad \qquad  = \mu_2(X_2)\,\DET^{1/a}((X_1,\mathsf{d}_1,\mu_1),(X_2,\mathsf{d}_2,\mu_2))\, .
\end{aligned}
\end{equation*} 
We thus reach the desired conclusion.
\end{proof}

\end{document}